\newcommand{\ds}{\displaystyle}
\newcommand{\supp}{\mathrm{supp}\;}
\newtheorem{remark}{\textbf{Remark}}[section]
\newtheorem{lemma}{\textbf{Lemma}}[section]
\newtheorem{theorem}{\textbf{Theorem}}[section]
\newtheorem{corollary}{\textbf{Corollary}}[section]
\newtheorem{proposition}{\textbf{Proposition}}[section]
\newtheorem{definition}{\textbf{Definition}}[section]
\numberwithin{equation}{section}
\title[On the asymptotic nature of first order \mbox{MFGs}]{On the asymptotic nature of first order mean field games} 
\author{Markus Fischer \and Francisco J. Silva}
\thanks{Dipartimento di Matematica ``Tullio Levi-Civita'', Universit{\`a} degli Studi di Padova, via Trieste, 63, 35121 Padova, Italia (fischer@math.unipd.it)}
\thanks{Institut de recherche XLIM-DMI, UMR-CNRS 7252, Facult\'e des Sciences et Techniques, 
Universit\'e de Limoges, 87060 Limoges, France (francisco.silva@unilim.fr)}
\def\dd{{\rm d}}
\def\weight(#1,#2){c_{#1,#2}}
\def\A{\mathcal{A}}
\def\B{\mathcal{B}}
\def\C{\mathcal{C}}
\def\F{\mathcal{F}}
\def\L{\mathcal{L}}
\def\P{\mathcal{P}}
\def\SS{\mathcal{S}}
\def\eps{\varepsilon}
\def\supp{\mathop{\rm supp}}
\def\1B{{\bf  1}}
\newcommand{\NN}{\mathbb{N}}
\newcommand{\OO}{\mathcal{O}}
\newcommand{\RR}{\mathbb{R}}
\def\II{\mathbb{I}}
\newcommand\be{\begin{equation}}
\newcommand\ee{\end{equation}}
\newcommand\ba{\begin{array}}
\newcommand\ea{\end{array}}
\newcommand{\bean}{\begin{eqnarray*}}
\newcommand{\eean}{\end{eqnarray*}}
\def\ds{\displaystyle}
\begin{document}

\begin{abstract}
For a class of finite horizon first order mean field games and associated $N$-player games, we give a simple proof of convergence of symmetric $N$-player Nash equilibria in distributed open-loop strategies to solutions of the mean field game in Lagrangian form. Lagrangian solutions are then connected with those determined by the usual mean field game system of two coupled first order \mbox{PDEs}, and convergence of Nash equilibria in distributed Markov strategies is established.
\end{abstract}

\maketitle

{\small
\noindent {\bf AMS-Subject Classification:}  49N70, 60B10, 91A06, 91A13. \\[0.5ex]

\noindent {\bf Keywords:} Mean field games, Lagrangian form, deterministic dynamics, Nash equilibrium, distributed strategies.
}

\section{Introduction} \label{Sect_Intro}

The purpose of this article is to illustrate a simple way of establishing convergence of open-loop Nash equilibria in the case of first-order non-stationary Mean Field Games (MFGs). Introduced by J.-M.~Lasry and P.-L.~Lions and, independently, by M.~Huang, R.P.~Malham{\'e} and P.E.~Caines about fifteen years ago (cf.\ \cite{LasryLions07, huangetal06}), mean field games are limit models for non-cooperative symmetric $N$-player  differential  games as the number of players $N$ tends to infinity; see, for instance, the lecture notes \cite{Cardaliaguet10} and the recent two-volume work \cite{carmonadelarue}. The notion of solution usually adopted for the prelimit models is that of a Nash equilibrium. A standard way of making the connection with the limit model rigorous is to show that a solution of the mean field game yields approximate Nash equilibria for the $N$-player games, with approximation error vanishing as $N\to \infty$. In the opposite direction, one aims to prove that a sequence of $N$-player Nash equilibria converges, as $N$ tends to infinity, to the mean field game limit. 

When Nash equilibria are considered in stochastic open-loop strategies, then  their  convergence is well understood and can be established under mild conditions; see \cite{fischer17} and \cite{lacker16}, both for finite horizon games with general, possibly degenerate, Brownian dynamics. The convergence  analysis is much harder when Nash equilibria are defined over Markov feedback strategies with full state information.

A first result in this setting was given by Gomes, Mohr, and Souza \cite{gomes} for continuous time games with finite state space. There, convergence of Markovian Nash equilibria is proved, but only if the time horizon is small enough. A breakthrough was achieved by Cardaliaguet, Delarue, Lasry, and Lions in \cite{cardaliaguetetal15}. In the setting of games with non-degenerate Brownian dynamics, possibly including common noise, convergence to the mean field game limit is established there for arbitrary time horizon provided the so-called master equation associated with the mean field game possesses a unique sufficiently regular solution. In this case, the convergence analysis can be refined, yielding not only convergence of minimal costs and propagation of chaos for the Nash equilibrium state trajectories, but also fluctuation and large deviations results for the associated empirical measures; see Cecchin and Pelino \cite{cp} and, independently, Bayraktar and Cohen \cite{bayraktarcohen} for finite state games and the papers by Delarue, Lacker, and Ramanan \cite{delarueetala, delarueetalb} for Brownian dynamics without or with common noise.

Well-posedness of the master equation implies uniqueness of solutions for the mean field game. But also the situation where the mean field game possesses multiple solutions, while the $N$-player Nash equilibria in full Markov feedback strategies are still uniquely determined, occurs. In this case, the convergence problem is in part open. The most general result appears to be the recent preprint \cite{lacker} by Lacker. There, for a class of games with non-degenerate Brownian dynamics, it is shown that all limit points of the $N$-player Nash equilibria are concentrated on weak solutions of the mean field game; these are more general than randomizations of ordinary (``strong'') solutions of the mean field game. In the three recent works \cite{nutzetalii}, \cite{delaruetchuendom} and \cite{cdfp}, the authors present case studies, giving criteria for characterizing those mean field game solutions that can be attained as limits of feedback Nash equilibria with full state information. 

Here, we consider a much simpler situation: The underlying dynamics are deterministic with direct control of players' states; randomness enters only through the players' initial positions, which are assumed to be independently and identically distributed. Thanks to the deterministic dynamics without explicit interaction, players can directly control their entire state trajectories. Thus, the set of strategies (or actions) of each player consists of trajectory-valued functions depending on time and player's own position.   We call these actions {\it distributed open-loop strategies}.     By considering randomizations of these strategies, obtaining what we call {\it randomized  distributed open-loop strategies},  we obtain existence of symmetric Nash equilibria in this new class of actions (through a standard fixed point argument), as well as precompactness of sequences of such equilibria. Convergence to the mean field game equilibrium in Lagrangian form (see e.g.\ \cite{CH17,MR3644590,cardaliaguet_meszaros_santambrogio_2018,cannarsa_capuani_2018})) along weakly converging subsequences of Nash equilibria is then established in Theorem  \ref{main_result} by showing that the variance of the associated empirical measures (evaluated by integrating over test functions from a countable family) tends to zero as $N\to \infty$. Thus, to obtain convergence, we essentially prove a law of large numbers for a triangular array $(Y^{N}_{i})_{i\in \{1,\ldots,N\}, N\in \mathbb{N}}$ where $Y^{N}_{1},\ldots,Y^{N}_{N}$ are independent and identically distributed with common marginal law that however depends on $N$.  Let us point out that after obtaining a suitable compactness property for the set of admissible strategies, the convergence result in Theorem  \ref{main_result} can actually be deduced from the more general results in 
\cite{fischer17} or \cite{lacker16}. Thus, the main purpose of this result is therefore to give a simple proof of convergence, which takes advantage of the structure of the dynamics and the deterministic nature of the underlying differential games.  We believe that the techniques considered here could be useful in order  to justify the asymptotic nature of more sophisticated {\it deterministic} MFGs (see e.g. \cite{cannarsa_capuani_2018,Cannarsa_Capuani_Cardaliaguet_18} dealing with state constrained problems).

In the second part of this article, we consider additional second order assumptions on the data and we assume that the initial distribution is absolutely continuous with respect to the Lebesgue measure. In this framework, and following  \cite{CH17}, we link the notion of Lagrangian MFG equilibrium to the well known \mbox{PDE} characterization of mean field games in terms of two coupled first order partial differential equations, namely a backward first order Hamilton-Jacobi-Bellman equation and a  forward continuity equation; see equation $(MFG)$ in Subsection \ref{equivalence_both_mfg_formulations} below. Under the stronger assumptions mentioned above, for each $N \in \NN$, any symmetric randomized distributed open-loop Nash equilibrium of the $N$-player game can be identified with a symmetric distributed open-loop Nash equilibrium (non-randomized). Moreover, the strategies associated to such equilibria can be described by controls which are feedback with respect to the individual  states.  We call these actions {\it distributed Markov strategies}. The latter are determined by the solutions of   
%
 a coupled system of first order PDEs analogous to the mean field game \mbox{PDE} system; see Eq.~$(MFG_{N})$ in Subsection~\ref{PDE_N_players}. This is in contrast with $N$-player Nash equilibria defined over Markov feedback strategies with full state information, which are determined through a system of $N$ coupled \mbox{PDEs}. Building on the equivalence of characterizations and the convergence result in Theorem  \ref{main_result}, we also establish in Theorem \ref{convergence_result_regular_case}  the  convergence of solutions of  $(MFG_{N})$  to solutions of $(MFG)$, as well as the convergence of the Nash equilibria in distributed Markov  strategies.

The rest of this paper is organized as follows. In Section~\ref{n_player_game}, we introduce the $N$-player games together with some notation and our standing assumptions. Existence of Nash equilibria in randomized distributed open-loop strategies is verified in Proposition~\ref{nash_equilibria_finite_game}. In Section~\ref{sect_mfg}, the associated mean field game is introduced in Lagrangian form; see Definition~\ref{MFG_equilibrium} and Proposition~\ref{equivalent_reformulation_mfg_eq}. We show convergence of symmetric $N$-player randomized distributed open-loop Nash equilibria to the mean field game limit in Theorem~\ref{main_result}. Section~\ref{sect_pde_approach} links, under additional assumptions, the mean field game in Lagrangian form with the mean field game \mbox{PDE} system. Similarly, $N$-player Nash equilibria in   distributed Markov strategies are described in terms of a \mbox{PDE} system analogous to the limit system, but dependent on the number of players $N$. The corresponding convergence results are given in Theorem~\ref{convergence_result_regular_case} and Corollary~\ref{Convergence_from_the_system}, respectively.

%

\section{The $N$-player game}\label{n_player_game}

Before introducing the $N$-player deterministic differential game that we are interested in, let us first fix some  notations. Given a Polish space $(X,d_X)$, we denote by $\P(X)$ the set of probability measures over $X$. If $(Y,d_Y)$ is another Polish space and $\varphi: X \to Y$ a Borel measurable function, then the {\it push-forward} $\varphi \sharp m \in \P(Y)$ of a measure $m \in \P(X)$ by $\varphi$ is defined by 
\be\label{definition_push_forward}
\varphi \sharp m (A):= m(\varphi^{-1}(A)) \hspace{0.5cm} \forall \; A \in \B(Y), 
\ee
where  $ \B(Y)$ denotes the $\sigma$-algebra of Borel sets in $Y$. Given $p \in [1,+\infty)$ we denote by $\P_{p}(X)$ the set of probability measures $\mu$ over $X$ such that $\int_{X} d_X(x,x_0)^p \dd \mu(x) < +\infty$ for some $x_0 \in X$. The set $\P_{p}(X)$ is endowed with the Monge-Kantorovich distance $d_p : \P_{p}(X) \times \P_{p}(X) \to [0,\infty)$ defined by
\[
	d_p(\mu_1, \mu_2):= \inf \left\{\left(\int_{X \times X} d_{X}^p(x,y) \dd \gamma(x,y)\right)^{\frac{1}{p}} \; | \; \gamma \in \P(X \times X), \; \pi_{1}\sharp \gamma= \mu_1, \; \; \pi_{2}\sharp \gamma= \mu_2  \right\},
\]
where $\pi_{i}:X \times X \to X$ ($i=1$, $2$) is the projection on the $i$-th coordinate, that is, $\pi_{i}(x_1,x_2):=x_{i}$. It is well known (see e.g. \cite{Villani03}) that in the particular case $p=1$ we have  
\be\label{monge_rubinsteint_form_d_1} d_1(\mu_1, \mu_2)= \sup \left\{ \int_{X}f(x) \dd \left(\mu_1-\mu_2\right)(x) \; | \; f \in \mbox{Lip}_1(X) \right\}, \ee
where $\mbox{Lip}_1(X)$ denotes the set of Lipschitz functions on $X$  with Lipschitz constant equal to one. 

We will fix as data some functions $\ell: \RR^d \times \RR^d \times \P_1(\RR^d)\to \RR$, $\Phi: \RR^d \times \P_1(\RR^d) \to \RR$ and a probability measure $m_0 \in \P(\RR^d)$. The functions $\ell$, $\Phi$ will represent running and terminal costs, respectively, while $m_{0}$ will be an initial state distribution. We will assume that:

\medskip\noindent
{\bf(A1)} {\rm(i)} The functions $\ell$ and $\Phi$ are continuous. Moreover, the following properties hold true: 
\begin{itemize}
\item[{\rm(i.1)}] For every $(x, \mu) \in \RR^{d} \times \P_1(\RR^d)$, the function $\ell( \cdot,x, \mu)$ is convex.  
\item[{\rm(i.2)}]  There exist $q>1$, $\underline{c}_\ell>0$, $\overline{c}_\ell>0$ and $C_\ell>0$ such that 
\be\label{bounds_for_ell}
\underline{c}_\ell |\alpha|^{q}-C_\ell \leq \ell( \alpha,x, \mu) \leq \overline{c}_\ell |\alpha|^{q}+C_\ell \hspace{0.5cm} \forall \;  \alpha \in \RR^d, \; x \in \RR^{d}, \; \; \mu \in \P_1(\RR^d),
\ee
where we denote by $|\cdot|$ the Eulidean norm in $\RR^d$.
\item[{\rm(i.3)}] The function $\Phi$ is bounded from below. 
\end{itemize}
{\rm(ii)}  The support of $m_0$,  denoted by $\mbox{supp}(m_0)$, is a compact subset of $\RR^d$. \smallskip\\


Choose $T > 0$, the finite time horizon, and set $\Gamma:= C([0,T];\RR^d)$, the space of continuous $\RR^d$-valued trajectories on $[0,T]$. The space $\Gamma$ is naturally endowed with the topology of uniform convergence. Let $W^{1,q}([0,T]; \RR^d)$ denote the Sobolev space of $\RR^d$-valued absolutely continuous functions that possess first order weak sense partial derivatives in $L^{q}((0,T); \RR^d)$.

In order to introduce the game that we will consider, assume first that there are $N$ individuals ($N\geq 2$), which, from now on, will be called {\it players}, positioned at $x_1, \hdots, x_{N} \in \RR^d$  at time $t=0$. For $i \in \{1, \hdots, N\}$, player $i$ chooses a trajectory $\gamma_i \in \A(x_i)$, where
\[
	\mathcal{A}(x):= \left\{ \gamma \in  \Gamma \; \big| \;  \gamma \in W^{1,q}([0,T]; \RR^d)   \; \; \mbox{and } \;  \gamma(0)=x\right\},\quad x\in \RR^d.
\]
Given these initial positions, the cost  $j_{i}^{N}: \prod_{j=1}^{N} \A(x_j) \to \RR$ for player $i$  is defined by 
\be\label{costs_fixed_initial_conditions}j_{i}^{N}(\gamma_1, \hdots,\gamma_N):= \int_{0}^{T}  \ell\left( \dot{\gamma}_i(t),\gamma_i(t),(\gamma_{j}(t))_{j\neq i}\right)  \dd t +  \Phi\left( \gamma_{i}(T), (\gamma_{j}(T))_{j\neq i} \right),
\ee
where, for notational convenience, we have denoted 
\[
	\ell \left( \dot{\gamma}_i(t), \gamma_i(t), (\gamma_{j}(t))_{j\neq i} \right):= \ell \left( \dot{\gamma}_i(t), \gamma_i(t), \frac{1}{N-1}\sum_{_{j\neq i}} \delta_{\gamma_{j}(t)}\right), \hspace{0.2cm} \mbox{with the same convention for $\Phi$.}
\]
Assumption {\bf(A1)} implies that $j_{i}^{N}$ is well-defined.  Note that defining 
\be\label{definition_small_j_N}j^{N}(\gamma_1; (\gamma_{j})_{j=2}^{N}):=\int_{0}^{T} \ell\left(\dot{\gamma}_1(t), \gamma_1(t), (\gamma_{j}(t))_{j=2}^{N} \right)  \dd t +  \Phi\left( \gamma_{1}(T), (\gamma_{j}(T))_{j=2}^{N}\right),  
\ee
we have that $j_{i}^{N}(\gamma_1, \hdots,\gamma_N)=j^{N}(\gamma_i; (\gamma_{j})_{j\neq i})$. 

In the game that we will consider, the initial position of each player is random, independent of the initial positions of the other players and with the same law $m_0$. 
In this new context, we  define the set of {\it distributed open-loop strategies}  of the players  as 
\[
	\mathcal{A}:= \left\{ \gamma: \RR^d \to   \Gamma  \; \big| \;  \gamma \text{ is Borel measurable and } \gamma(x) \in \A(x) \text{ for all } x\in \mbox{supp}(m_0)\right\}.
\]
For notational simplicity, for $\gamma \in \A$, we will write $\gamma^x:=\gamma(x)$. Given a profile of actions $(\gamma_1, \hdots, \gamma_N)\in \A^N$, it is  natural to define the cost that it induces on player $i$ as the mean with respect to the initial conditions  of costs having the form \eqref{costs_fixed_initial_conditions}. Namely, the \emph{cost function} $J_{i}^N: \A^N \to \RR$ for player $i$ is defined as
\[
	J_i^{N}(\gamma_1, \hdots, \gamma_N):= \int_{(\RR^d)^{N}} j_i^N(\gamma^{x_1}_{1}, \hdots, \gamma^{x_N}_{N})  \otimes_{j=1}^{N}  \dd m_{0} (x_{j}).
\]
Recalling \eqref{definition_small_j_N} and defining  
\be\label{definition_J_N}
J^{N}(\gamma_{1}; (\gamma_{j})_{j=2}^{N} ):= \int_{(\RR^{d})^{N}}j^{N}(\gamma_1^{x_1}; (\gamma_{j}^{x_j})_{j=2}^{N})\otimes_{j=1}^{N}  \dd m_{0} (x_{j}),
\ee
we have $J_i^{N}(\gamma_1, \hdots, \gamma_N)=J^{N}(\gamma_{i}; (\gamma_{j})_{j\neq i} )$ for all $i=1, \hdots, N$. In particular, the costs are {\it symmetric}, i.e.\ for every permutation $\ds \sigma: \{1,\hdots,N\} \to \{1,\hdots,N\}$, we have that 
\[
J_i^{N}(\gamma_1, \hdots, \gamma_N)= J_{\sigma(i)}^{N}(\gamma_{\sigma(1)}, \hdots, \gamma_{\sigma(N)}).
\]
%
Let us recall the classical notion of Nash equilibrium when applied to the game defined by the $N$ players,  the action set $\A$ (which is the same for all players) and the individual costs $(J_{i})_{i=1}^{N}$.

\begin{definition}\label{nash_equilibrium}We say that $(\gamma_{1,N}, \hdots, \gamma_{N,N})  \in \A^{N} $ is a \emph{Nash equilibrium} in distributed open-loop strategies if 
\be\label{Nash_equilibrium_inequality_0}
J^N(\gamma_{i,N}; (\gamma_{j,N})_{j \neq i}) \leq J^N(\gamma; (\gamma_{j,N})_{j \neq i})  \hspace{0.3cm} \forall \; \gamma \in \A, \; \; i=1, \hdots, N.
\ee
\end{definition}
\bigskip


The existence of a Nash equilibrium for this symmetric game is not immediate because of the lack of compactness of the set $\A$.  The following simple result shows that the time derivatives of the strategies in a Nash equilibrium configuration (provided that it exists) enjoy a uniform boundedness  property in $L^q((0,T);\RR^d)$.  This fact will allow us to reduce the set of action strategies in Definition \ref{nash_equilibrium}.
%
\begin{lemma}\label{bounds_l2} Assume that {\bf(A1)} holds and that $(\gamma_{1,N}, \hdots, \gamma_{N,N}) \in \A^N$ is a Nash equilibrium.  Then, there exists a constant $C>0$, independent of $N$, such that
\be\label{bounded_second_order_moment}
\int_{0}^{T} |\dot{\gamma}_{i,N}^{x}(t)|^q \dd t \leq C \hspace{0.4cm} \mbox{{\rm for $m_0$-a.e.}  $x \in \RR^d$}, \; \; i=1, \hdots, N.
\ee
\end{lemma}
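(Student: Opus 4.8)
My plan is to use the Nash property \eqref{Nash_equilibrium_inequality_0} by letting the $i$-th player deviate, on the set of initial positions where \eqref{bounded_second_order_moment} fails, to the laziest admissible strategy — staying put — and then to read off the bound from the coercivity of $\ell$ in \eqref{bounds_for_ell}.

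The first step is to record the product structure of the cost. By Tonelli's theorem — legitimate because, under {\bf(A1)}, the integrand $j^{N}$ is bounded below by $-(C_\ell T-\inf\Phi)$ — for every $\gamma\in\A$ and every $i$ one has $J^{N}(\gamma;(\gamma_{j,N})_{j\neq i})=\int_{\RR^{d}}G_{i}^{N}(\gamma^{x})\,\dd m_{0}(x)$, where $G_{i}^{N}(\eta):=\int_{(\RR^{d})^{N-1}}j^{N}(\eta;(\gamma_{j,N}^{x_{j}})_{j\neq i})\otimes_{j\neq i}\dd m_{0}(x_{j})$ (all quantities finite at a Nash equilibrium, since otherwise every deviation would also be of infinite cost). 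Now fix $i$ and a constant $C>0$ to be chosen, set $E:=\{x\in\RR^{d}:\int_{0}^{T}|\dot\gamma_{i,N}^{x}(t)|^{q}\,\dd t>C\}$ (a Borel set, since $x\mapsto\gamma_{i,N}^{x}$ is Borel into $\Gamma$), and define $\gamma'\in\A$ by $(\gamma')^{x}=\gamma_{i,N}^{x}$ for $x\notin E$ and $(\gamma')^{x}\equiv x$ for $x\in E$. Because the two branches are Borel in $x$, $\gamma'$ is a legitimate element of $\A$; plugging it into \eqref{Nash_equilibrium_inequality_0} and cancelling the contributions over $\RR^{d}\setminus E$ leaves $\int_{E}\big(G_{i}^{N}(\gamma_{i,N}^{x})-G_{i}^{N}(c_{x})\big)\,\dd m_{0}(x)\le 0$, with $c_{x}\equiv x$. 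Hence that integrand is $\le 0$ for $m_{0}$-a.e.\ $x\in E$ — note that no measurable selection is needed here, only the swap to the explicit constant path.

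The second step is to estimate $G_{i}^{N}(\gamma_{i,N}^{x})\le G_{i}^{N}(c_{x})$ pointwise. For the competitor, \eqref{bounds_for_ell} with $\alpha=0$ gives $\ell(0,x,\mu)\le C_\ell$, so the running part of $G_{i}^{N}(c_{x})$ is at most $C_\ell T$; on the equilibrium side, \eqref{bounds_for_ell} gives $\int_{0}^{T}\ell(\dot\gamma_{i,N}^{x},\gamma_{i,N}^{x},\cdot)\,\dd t\ge\underline{c}_\ell\int_{0}^{T}|\dot\gamma_{i,N}^{x}|^{q}\,\dd t-C_\ell T$, and {\bf(A1)}(i.3) bounds the terminal part from below by $\inf\Phi$. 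Combining, for $m_{0}$-a.e.\ $x\in E$,
\[
\underline{c}_\ell\int_{0}^{T}|\dot\gamma_{i,N}^{x}(t)|^{q}\,\dd t\ \le\ 2C_\ell T-\inf\Phi+\int_{(\RR^{d})^{N-1}}\Phi\!\left(x,\tfrac{1}{N-1}\textstyle\sum_{j\neq i}\delta_{\gamma_{j,N}^{x_{j}}(T)}\right)\otimes_{j\neq i}\dd m_{0}(x_{j}).
\]
If the last term is bounded by a constant independent of $N$ and of $x\in\mathrm{supp}(m_{0})$, then taking $C$ strictly larger than $\underline{c}_\ell^{-1}$ times this bound forces $m_{0}(E)=0$, which is precisely \eqref{bounded_second_order_moment}.

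The step I expect to be the main obstacle is exactly this uniform bound on the competitor's terminal term. The idea is that $\mathrm{supp}(m_{0})$ is compact and $\Phi$ is continuous, so it is enough to confine the endpoints $\gamma_{j,N}^{x_{j}}(T)$ of the equilibrium trajectories to one fixed compact subset of $\RR^{d}$ independent of $N$: then the empirical measures $\tfrac{1}{N-1}\sum_{j\neq i}\delta_{\gamma_{j,N}^{x_{j}}(T)}$ stay in a compact subset of $\P_{1}(\RR^{d})$ and $\Phi$ is bounded on the resulting compact set. Since $|\gamma_{j,N}^{x_{j}}(T)-x_{j}|\le T^{1-1/q}\big(\int_{0}^{T}|\dot\gamma_{j,N}^{x_{j}}|^{q}\,\dd t\big)^{1/q}$ by H\"older's inequality, an endpoint bound and a velocity bound are equivalent, so the plan is to run the above comparison simultaneously for all $N$ players and close the resulting self-consistent inequality for $\operatorname*{ess\,sup}_{x}\max_{j}\int_{0}^{T}|\dot\gamma_{j,N}^{x}|^{q}\,\dd t$, using $q>1$ together with the boundedness properties of the data in {\bf(A1)}. (A variant that formally sidesteps the terminal term is to deviate, on $E$, to the straight segment joining $x$ to $\gamma_{i,N}^{x}(T)$, so that the $\Phi$-contribution cancels outright; this is transparent when $\underline{c}_\ell$ dominates $\overline{c}_\ell$ but less robust in general, so I would keep the lazy-strategy route above.)
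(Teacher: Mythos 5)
Your overall strategy is the paper's: compare the equilibrium trajectory of player $i$ with the ``stay put'' competitor and read the bound off the coercivity in \eqref{bounds_for_ell}. Your device of deviating only on the bad set $E$ is in fact a simplification of the paper's route: the paper first shows, via closedness and upper semicontinuity of the pointwise argmin map and the Kuratowski--Ryll-Nardzewski selection theorem, that $\gamma_{i,N}^{x}$ minimizes the averaged cost $j_i$ over $\A(x)$ for $m_0$-a.e.\ $x$, and only then compares with the constant path $\bar\gamma^x\equiv x$; with your deviation-on-$E$ trick no measurable selection is needed. One logical slip: from $\int_E\bigl(G_i^N(\gamma_{i,N}^x)-G_i^N(c_x)\bigr)\dd m_0(x)\le 0$ you cannot conclude that the integrand is $\le 0$ for $m_0$-a.e.\ $x\in E$; the correct repair is to bound the integrand from below on $E$ by $\underline{c}_\ell C-\bigl(2C_\ell T-\inf\Phi+\sup_{x\in\supp(m_0)}\Phi_i(x)\bigr)$, with $\Phi_i$ as in \eqref{definition_hat_f}, and choose $C$ large so that $m_0(E)=0$ is forced --- but this repair requires exactly the uniform bound you postpone.

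That postponed step is where the genuine gap lies. The paper does not bootstrap at this point: it takes from {\bf(A1)} that $j_i(\bar\gamma^x)$ is bounded by a constant uniform in $x\in\supp(m_0)$ and in $N$ (the running part is $\le C_\ell T$ because $\ell(0,x,\mu)\le C_\ell$ by \eqref{bounds_for_ell}, and the averaged terminal cost $\Phi_i(x)$ is treated as uniformly bounded), and then concludes directly from the coercive lower bound. Your substitute --- confining all players' endpoints to one fixed compact set by running the comparison simultaneously for all $N$ players and closing a self-consistent inequality --- is only sketched, and as sketched it cannot close under {\bf(A1)}: the assumption gives no upper growth control of $\Phi$ in its measure argument, so the inequality you would obtain has the form $\underline{c}_\ell S\le A+F(S)$, where $S$ is the essential supremum over $j$ and $x$ of $\int_0^T|\dot\gamma_{j,N}^x(t)|^q\dd t$ and $F(S)$ is the supremum of $\Phi$ over measures carried by a ball of radius of order $S^{1/q}$; continuity and boundedness from below of $\Phi$ place no restriction on how fast $F$ grows (for instance $\Phi(x,\mu)=\exp\bigl(\int|y|\dd\mu(y)\bigr)$ is admissible), so the exponent $q>1$ does not save the argument, and you yourself note that the straight-segment variant only works when $\underline{c}_\ell$ dominates $\overline{c}_\ell$. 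In short, you correctly sensed that an upper control on the terminal term is the crux, but the fix is not a fixed-point estimate across players: one must, as the paper does, invoke the uniform boundedness of the competitor's averaged terminal cost as part of the standing assumptions (in the spirit of the uniform bound on $g$ in {\bf(A2)}(ii)); without that, the bootstrap you propose does not fill the hole, so the proof as written is incomplete at its decisive step.
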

\begin{proof} Fix $i \in \{1, \hdots, N\}$ and define $j_i: W^{1,q}([0,T];\RR^d) \to \RR$ as 
\[
	j_i(\gamma)= \int_{0}^{T}  \ell_i( \dot{\gamma}(t),\gamma(t), t) \dd t +  \Phi_i(\gamma(T)),
\]
where $\ell_i: \RR^{d} \times \RR^{d} \times [0,T] \to \RR$ and $\Phi_i:\RR^d \to \RR$ are defined by
\be\label{definition_hat_f}
\ba{rcl}
\ell_i(\alpha,x, t)&:=&\int_{\RR^{(N-1) \times d}}  \ell\left(\alpha, x, (\gamma_{j,N}^{x_j}(t))_{j\neq i}  \right) \otimes_{j\neq i}  \dd m_0(x_j),\\[6pt]
\Phi_i(x)&:=& \int_{\RR^{(N-1) \times d}} \Phi\left( x , (\gamma_{j,N}^{x_j}(T))_{j\neq i}  \right) \otimes_{j\neq i}  \dd m_0(x_j). \ea
\ee
By  Fubini's theorem, and \eqref{Nash_equilibrium_inequality_0}, for all $\gamma \in \A$ we have
\be\label{inequality_in_mean_wrt_m_0}
J^{N}(\gamma_{i,N};  (\gamma_{j,N})_{j\neq i})= \int_{\RR^d} j_{i}(\gamma_{i,N}^{x}) \dd m_0(x)\leq J^{N}(\gamma;  (\gamma_{j,N})_{j\neq i}) = \int_{\RR^d} j_{i}(\gamma^{x}) \dd m_0(x).
\ee
For $x \in \RR^d$ define 
$$\SS_i^N(x):=  \mbox{argmin}\left\{ j_i(\gamma) \; | \; \gamma \in \A(x)\right\} \subseteq \Gamma.$$
Assumption {\bf(A1)} implies that $j_i$ is lower semicontinuous, w.r.t.\ the weak topology in $W^{1,q}([0,T]; \RR^d)$ (see e.g. \cite[Corollary 3.24]{MR990890}). Using this fact,  the direct method in the Calculus of Variations and the first inequality in \eqref{bounds_for_ell}, we obtain that $\SS_i^N(x) \neq \emptyset$ for all $x\in \RR^d$.   \\
{\it Claim: } The set-valued map $\RR^d \ni x \rightrightarrows \SS_i^N(x) \in 2^{\Gamma}$   takes closed values and  is upper semicontinuous, i.e. for all closed sets $M  \subseteq \Gamma$  we have that $\{ x\in \RR^d \; | \; \SS_i^N(x) \cap M \neq \emptyset\}$ is closed. \\
Let us assume for a moment  that the claim is true. Then, as a consequence of its second statement, the set valued map $\SS_i^N$ is Borel measurable and, hence, by the  Kuratowski-Ryll-Nardzewski Selection Theorem (see e.g. \cite[Theorem 18.13]{MR2378491}) we have the existence of a Borel measurable function $\RR^d \ni x \mapsto \hat{\gamma}(x) \in \Gamma$ such that $\hat{\gamma}(x) \in \SS_i^N(x)$ for all $x\in \RR^d$. Since $\hat{\gamma} \in \A$, relation \eqref{inequality_in_mean_wrt_m_0} implies that $\gamma_{i,N}^{x} \in \SS_i^N(x)$ for $m_0$-almost every $x\in \RR^d$. Taking $x\in \RR^d$ such that $\gamma_{i,N}^{x} \in \SS_i^N(x)$, we have that $j_{i}(\gamma_{i,N}^{x}) \leq j_{i}(\bar{\gamma}^x)$, where $\bar{\gamma}^x(t):=x$ for all $t\in [0,T]$. Assumption ${\bf(A1)}$ implies that $j_{1}(\bar{\gamma}^x)$ is bounded by a constant, which is uniform for $x\in \mbox{supp}(m_0)$ and  independent of $N$. Using this fact, the first inequality in \eqref{bounds_for_ell} easily yields \eqref{bounded_second_order_moment}. 

It remains to prove the claim. First note that if $(\gamma_n^{x})$ is a sequence in $\SS_i^N(x)$ converging to $\gamma^x$ uniformly in $[0,T]$, then, by the first inequality in \eqref{bounds_for_ell}, the sequence $(\dot{\gamma}_n^x)$ is bounded in $L^{q}([0,T]; \RR^d)$. If $g$ is a weak limit point of  $(\dot{\gamma}_n^x)$ in $L^{q}([0,T]; \RR^d)$, then, passing to the limit along a subsequence  in the relation $\gamma_n^x(t)=x + \int_{0}^{t}\dot{\gamma}_n^x(s) \dd s$ for all $t \in [0,T]$, we get that $\gamma^{x} \in  W^{1,q}([0,T];\RR^d)$, with $\dot{\gamma}^x=g$, and, hence, the whole sequence $(\dot{\gamma}_n^x)$ converges weakly to $\dot{\gamma}^x$  in $L^{q}([0,T]; \RR^d)$. Using this fact and the weak lower semicontinuity of $j_{i}$ in $W^{1,q}([0,T]; \RR^d)$, we obtain that $\gamma^x \in \SS_i^N(x)$ and, hence, $\SS_i^N(x)$ is closed in $\Gamma$. In order to show that $\SS_i^N$ is upper semicontinuous, let $M$ be a closed subset of $\Gamma$ and let $(x_n)$ be a sequence in  $\{ x \in \RR^d \; | \; \SS_i^N(x) \cap M \neq \emptyset\}$ converging to some $\bar{x}\in \RR^d$. Then, by definition, there exists $\gamma_n^{x_n} \in \SS_{i}^N(x_n)\cap M$. Arguing as before, using the first inequality in \eqref{bounds_for_ell}, we obtain that $(\dot{\gamma}_n^{x_n})$ is bounded in $L^{q}([0,T]; \RR^d)$. This implies that, up to some subsequence, $\gamma_n^{x_n}$ converge uniformly to some $\gamma^{\bar{x}} \in W^{1,q}([0,T];\RR^d)$ and  $\dot{\gamma}^{x_n} \to \dot{\gamma}^{\bar{x}}$ weakly in $L^{q}([0,T]; \RR^d)$. Using that $M$ is closed, w.r.t.\ the uniform convergence, we get that $\gamma^{\bar{x}} \in M$. On the other hand, noticing that 
\[
j_{i}(\gamma_n^{x_n}) \leq  j_{i}(\gamma^{x_n}) \hspace{0.3cm} \forall \; \gamma^{x_n} \in \A(x_n),
\]
and the fact that any $\gamma  \in \A(\bar{x})$ satisfies $\gamma -\bar{x} + x_n \in \A(x_n)$, the weak lower semicontinuity of $j_i$ yields that $j_{i}(\gamma^{\bar{x}}) \leq  j_{i}(\gamma ) \hspace{0.3cm} \forall \; \gamma  \in \A(\bar{x})$, i.e,  $\gamma^{\bar{x}} \in \SS_i^N(\bar{x})\cap M$, and, hence, $\bar{x} \in \{ x \in \RR^d \; | \; \SS_i^N(x) \cap M \neq \emptyset\}$, which implies that the latter set is closed.

\end{proof}
Now, we focus our attention on the existence of Nash equilibria for the described game. Using Lemma \ref{bounds_l2}, a reformulation of the set of admissible strategies and cost functionals of the $N$-players game will be useful. Let $C>0$ be given by Lemma \ref{bounds_l2} and define the set
\[
 Q_{C}:=\left\{ \gamma \in W^{1,q}([0,T];\RR^d) \; | \; \int_{0}^{T}|\dot{\gamma}(t)|^q \dd t \leq C, \; \;  \gamma(0) \in \mbox{supp}(m_0)\right\}.
\]
Since $\mbox{supp}(m_0)$ is compact, H\"older's inequality yields the existence of a compact set $K_C\subseteq \RR^d$ such that
\be\label{trajectories_on_a_fixed_compact_set}
  \gamma(t) \in 	K_{C} \quad \text{for all } t\in [0,T],\; \gamma \in Q_C.
\ee
Using this fact and arguing as in the proof of Lemma \ref{bounds_l2}, we have that $Q_C$ is compact as a subset of $\Gamma$, that is, when it is endowed with the topology of uniform convergence. 

Given $\gamma_{i} \in \A$, let us set $m_{i} := \gamma_{i} \sharp m_{0} \in \P(\Gamma)$. By \eqref{definition_push_forward}, for any profile of strategies $(\gamma_i)_{i=1}^{N} \in \A^N$,  the cost for player $i$ is given by
\be\label{cost_writen_in_terms_of_measures}
J^{N}(\gamma_i;  (\gamma_j)_{j\neq i})= \int_{\Gamma^{N}} j^N(\tilde{\gamma}_{i}; (\tilde{\gamma}_{j})_{j\neq i})  \otimes_{j=1}^{N} \dd m_{j} (\tilde{\gamma}_{j}).
\ee
This expression for the cost motivates a relaxation of the game with strategies in $\A$ by considering strategies taking values in $\P(\Gamma)$. Let us define the set  $\A_{rel}$  of  {\it randomized  distributed open-loop strategies} by
\be\label{relaxed_set_of_measures}
\A_{rel}:= \left\{ m \in \P(\Gamma) \; | \; e_0 \sharp m= m_0, \; \; \mbox{supp}(m) \subseteq Q_{C}  \right\},
\ee
where, for each $t\in [0,T]$,  $e_{t}\!: \Gamma \rightarrow \RR^{d}$ is defined by $e_t(\gamma):=\gamma(t)$.

\begin{lemma} \label{Lemma_relaxed_set_of_measures} The set $\A_{rel}$ is convex and compact as a subset of $\P(\Gamma)$. 
\end{lemma}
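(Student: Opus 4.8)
The plan is to verify the two properties separately, as both are standard once one recalls that $\A_{rel}$ is cut out of $\P(\Gamma)$ by a linear constraint ($e_0\sharp m = m_0$) together with a support constraint ($\supp(m)\subseteq Q_C$), and that $Q_C$ is a compact subset of $\Gamma$.

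For convexity, I would take $m_1, m_2 \in \A_{rel}$ and $t\in[0,1]$, and set $m := t m_1 + (1-t) m_2$. The map $m \mapsto e_0\sharp m$ is affine (indeed for any Borel $A$, $e_0\sharp m(A) = m(e_0^{-1}(A)) = t m_1(e_0^{-1}(A)) + (1-t) m_2(e_0^{-1}(A)) = t\, e_0\sharp m_1(A) + (1-t)\, e_0\sharp m_2(A) = m_0(A)$), so the first constraint is preserved. For the support constraint, since $Q_C$ is closed in $\Gamma$ and $m_i(\Gamma\setminus Q_C)=0$ for $i=1,2$, we get $m(\Gamma\setminus Q_C)=0$, hence $\supp(m)\subseteq Q_C$. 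Thus $m\in\A_{rel}$.

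For compactness I would argue as follows. First, $\A_{rel}$ is \emph{tight}: every $m\in\A_{rel}$ is supported on the single compact set $Q_C$, so tightness is immediate and, by Prokhorov's theorem, $\A_{rel}$ is relatively compact in $\P(\Gamma)$ for the weak topology. It remains to show $\A_{rel}$ is closed. Let $(m_n)\subseteq\A_{rel}$ with $m_n\rightharpoonup m$ weakly. Since each $m_n$ is supported on the closed set $Q_C$, the Portmanteau theorem gives $m(\Gamma\setminus Q_C)\le\liminf_n m_n(\Gamma\setminus Q_C)=0$ (using that $\Gamma\setminus Q_C$ is open), so $\supp(m)\subseteq Q_C$. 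For the initial-time constraint, note that $e_0:\Gamma\to\RR^d$ is continuous, so $e_0\sharp$ is continuous for the weak topologies on $\P(\Gamma)$ and $\P(\RR^d)$ (test against $\varphi\in C_b(\RR^d)$: $\int\varphi\, \dd(e_0\sharp m_n)=\int \varphi\circ e_0\, \dd m_n\to\int\varphi\circ e_0\, \dd m=\int\varphi\, \dd(e_0\sharp m)$). Hence $e_0\sharp m=\lim_n e_0\sharp m_n=m_0$, so $m\in\A_{rel}$. Being closed and relatively compact, $\A_{rel}$ is compact.

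I do not expect a genuine obstacle here; the only point requiring a little care is to use the correct direction of the Portmanteau inequality for the support constraint (open set, hence $\liminf$ of measures bounds the limit from below — which gives the value $0$ at the limit), and to be sure that the ambient space $\P(\Gamma)$ is metrizable so that sequential compactness suffices (which it is, $\Gamma$ being Polish). All of this is entirely routine.
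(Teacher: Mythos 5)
Your proof is correct and follows essentially the same route as the paper: convexity directly from the definition, and compactness by observing that $\A_{rel}$ sits inside the compact set $\P(Q_C)$ (which you obtain via tightness and Prokhorov) and is closed under narrow convergence. The only difference is that you verify the closedness of the two constraints by hand (Portmanteau for the support condition, continuity of $e_0\sharp$ for the marginal condition), where the paper simply cites \cite[Proposition 5.1.8]{Ambrosiogiglisav} and the closedness of $Q_C$.
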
 
\begin{proof} Convexity follows directly from the definition. On the other hand, since $\A_{rel}\subseteq \P(Q_{C})$ and $ \P(Q_{C})$ is compact as a subset of $\P(\Gamma)$ (because $Q_C$ is a compact subset of $\Gamma$),  it suffices to check that $ \A_{rel}$ is closed in $\P(\Gamma)$, but this follows directly from \cite[Proposition 5.1.8]{Ambrosiogiglisav} and the fact that $Q_C$ is closed.
%
%

\end{proof}
\begin{remark}\label{marginal_compactness} For later use, note that if $m\in \A_{rel}$, then $[0,T] \ni t \to e_t \sharp m \in \P_1(\RR^d)$ is well-defined and, by \eqref{monge_rubinsteint_form_d_1}, belongs to $C([0,T]; \P_1(\RR^d))$. Moreover, since  $ \mbox{{\rm supp}}(m) \subseteq Q_{C} $, we easily check that there exists $C'>0$, independent of $m \in \A_{rel}$, such that  
\[
	d_1(e_t \sharp m, e_s \sharp m) \leq C' | t-s|^{\frac{1}{q'}} \;   \hspace{0.3cm} \forall \; s, \; t \in [0,T].
\]
%
Therefore,  by \eqref{trajectories_on_a_fixed_compact_set}, \cite[Proposition 7.1.5]{Ambrosiogiglisav} and Lemma \ref{Lemma_relaxed_set_of_measures},  the set $\{ [0,T] \ni t \to e_t \sharp m \in \P_1(\RR^d) \; | \; m \in \A_{rel}\}$ is compact in $C([0,T]; \P_1(\RR^d))$.
\end{remark}

 Motivated by \eqref{cost_writen_in_terms_of_measures}, we introduce the   new relaxed game which has $\A_{rel}$ as set  of strategies for each player and,   given a strategy profile $(m_{j})_{j=1}^{N} \subseteq \A_{rel}^{N}$, the cost for player $i$  is   given by
$$ J_{rel}^{N}(m_{i};(m_{j})_{j \neq i}) :=\int_{Q_C^{N}} j^N(\gamma_{i}; (\gamma_{j})_{j\neq i})  \otimes_{j=1}^{N}  \dd m_{j} (\gamma_{j}). $$
Note that the this game is still symmetric.  In this framework, a profile of strategies $(m_{1,N}, \hdots, m_{N,N})\in \A_{rel}^{N}$ is  called a \emph{Nash equilibrium}  in randomized  distributed open-loop strategies if 
\be\label{Nash_equilibrium_inequality_1}
J_{rel}^N(m_{i,N}; (m_{j,N})_{j \neq i}) \leq J_{rel}^N(m; (m_{j,N})_{j \neq i})  \hspace{0.3cm} \forall \; m \in  \A_{rel}, \; \; i=1, \hdots, N.
\ee

\begin{proposition}\label{nash_equilibria_finite_game}
Under assumption {\bf(A1)}, there exists at least one Nash equilibrium in randomized  distributed open-loop strategies having the form $(m_{N}, \hdots, m_{N})\in \A_{rel}^{N}$.
\end{proposition}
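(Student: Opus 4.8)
The plan is to obtain a symmetric Nash equilibrium by a standard fixed point argument applied to the best-response correspondence on the single common strategy $\A_{rel}$. The key observation is that for a symmetric profile $(m,\dots,m)$, the objective of player $1$ against the others depends only on $m$ (for the competitors) and on the test strategy $\nu$ (for player $1$), so it is natural to define the ``best response against $m$'' set
\[
\mathrm{BR}(m):=\mathrm{argmin}\left\{\, J_{rel}^{N}(\nu;(m,\dots,m)) \;\big|\; \nu\in\A_{rel}\,\right\}\subseteq\A_{rel},
\]
and to find a fixed point $m_\star\in\mathrm{BR}(m_\star)$ via the Kakutani–Fan–Glicksberg theorem. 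Since $\A_{rel}$ is a nonempty convex compact subset of the locally convex space of signed measures on $\Gamma$ (Lemma \ref{Lemma_relaxed_set_of_measures}), it remains to verify: (a) $\mathrm{BR}(m)$ is nonempty and convex for every $m$; (b) the graph of $\mathrm{BR}$ is closed.

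First I would note that $\nu\mapsto J_{rel}^{N}(\nu;(m,\dots,m))$ is \emph{affine} in $\nu$, being the integral of $\nu$ against the fixed bounded continuous function $\gamma\mapsto\int_{Q_C^{N-1}} j^{N}(\gamma;(\gamma_j)_{j\ge2})\,\otimes_{j\ge2}\dd m(\gamma_j)$ — here one must check that this function is indeed continuous on $Q_C$, which follows from continuity of $\ell,\Phi$ together with the uniform bounds \eqref{trajectories_on_a_fixed_compact_set}, the growth bound \eqref{bounds_for_ell}, and dominated convergence; note that on $Q_C$ all trajectories lie in the fixed compact $K_C$, so $\ell$ and $\Phi$ are bounded there. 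An affine continuous functional on the compact set $\A_{rel}$ attains its minimum, giving nonemptiness, and the argmin set of an affine functional is convex, giving (b'). Closedness of the graph follows from the continuity of $(\nu,m)\mapsto J_{rel}^{N}(\nu;(m,\dots,m))$ on $\A_{rel}\times\A_{rel}$ (the integrand $\gamma\mapsto j^{N}$ being jointly continuous and bounded on $Q_C^{N}$, and narrow convergence of product measures being implied by narrow convergence of each factor), via the usual argument: if $m_k\to m$, $\nu_k\to\nu$, $\nu_k\in\mathrm{BR}(m_k)$, then for any $\mu\in\A_{rel}$, $J_{rel}^{N}(\nu;(m,\dots,m))=\lim J_{rel}^{N}(\nu_k;(m_k,\dots,m_k))\le\lim J_{rel}^{N}(\mu;(m_k,\dots,m_k))=J_{rel}^{N}(\mu;(m,\dots,m))$.

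Once these are in place, Kakutani–Fan–Glicksberg yields $m_N\in\mathrm{BR}(m_N)$, and by symmetry of the cost the profile $(m_N,\dots,m_N)$ satisfies \eqref{Nash_equilibrium_inequality_1}: for any $i$ and any $m\in\A_{rel}$, $J_{rel}^{N}(m_N;(m_N)_{j\ne i})=J_{rel}^{N}(m_N;(m_N,\dots,m_N))\le J_{rel}^{N}(m;(m_N,\dots,m_N))=J_{rel}^{N}(m;(m_N)_{j\ne i})$.

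The main obstacle I anticipate is not the fixed point machinery itself — that is routine once the topological setup of Lemma \ref{Lemma_relaxed_set_of_measures} is granted — but the careful verification of \emph{joint continuity and boundedness of the integrand} $j^{N}$ restricted to $Q_C^{N}$, and hence of the cost functional, under narrow convergence. The subtle points are: that $\ell$ is only assumed continuous (not uniformly so), which is handled because all relevant velocities and positions are confined to compact sets on $Q_C$; that the empirical measure map $(\gamma_j)_{j\ne i}\mapsto\frac1{N-1}\sum\delta_{\gamma_j(t)}$ is continuous into $\P_1(\RR^d)$ uniformly in $t$; and that one may pass limits under the time integral using the uniform $L^q$-velocity bound as an equi-integrability substitute. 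These are standard but must be stated; I would lean on Remark \ref{marginal_compactness} and the compactness of $Q_C$ in $\Gamma$ to organize them.
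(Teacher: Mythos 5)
Your overall strategy is exactly the paper's: the same best-response correspondence on $\A_{rel}$, the same use of Lemma \ref{Lemma_relaxed_set_of_measures} and the Kakutani--Fan--Glicksberg theorem, and the same symmetry argument at the end. The gap is in your continuity claims. The map $\gamma\mapsto \int_{Q_C^{N-1}} j^{N}(\gamma;(\gamma_j)_{j\ge2})\,\otimes_{j\ge 2}\dd m(\gamma_j)$ is \emph{not} continuous on $Q_C$, and $j^{N}$ is \emph{not} jointly continuous on $Q_C^{N}$: the running cost depends on the velocity $\dot\gamma$ of the deviating player, and the topology on $Q_C$ is uniform convergence, which gives only weak $L^q$ convergence of velocities. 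Take for instance $\ell(\alpha,x,\mu)=|\alpha|^q$, $\Phi=0$, and $\gamma_n(t)=n^{-1}\sin(nt)$: then $\gamma_n\to 0$ uniformly while $\int_0^T|\dot\gamma_n|^q\dd t$ stays bounded away from $0$, so the cost drops strictly in the limit. Hence the functional is only lower semicontinuous in the deviator's trajectory (this is precisely where convexity of $\ell(\cdot,x,\mu)$ and the weak lower semicontinuity theorem of the calculus of variations enter), the integral $\nu\mapsto\int\hat j\,\dd\nu$ is only l.s.c.\ under narrow convergence, and your closed-graph step, which uses the equality $J_{rel}^N(\nu;(m,\dots,m))=\lim_k J_{rel}^N(\nu_k;(m_k,\dots,m_k))$, is not justified as written. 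Equi-integrability of the velocities does not repair this: it rules out concentration, not oscillation.

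The step is salvageable, and this is how the paper argues: continuity holds in the \emph{opponents'} slot only, because the other players enter the cost solely through their positions $\gamma_j(t)$, $\gamma_j(T)$, which do behave continuously under uniform convergence (combined with the growth bound \eqref{bounds_for_ell}, dominated convergence, and convergence of the product measures $m_k^{\otimes(N-1)}$, cf.\ \cite[Theorem 3.2]{MR0233396}); in the deviator's slot one only has joint lower semicontinuity of $j^N$ on $Q_C^N$ (\cite[Theorem 3.23]{MR990890} together with \cite[Lemma 5.1.7]{Ambrosiogiglisav}). The closed graph then follows from the asymmetric estimate
\[
J_{rel}^{N}(\bar\nu;\bar m,\dots,\bar m)\;\le\;\liminf_k J_{rel}^{N}(\nu_k;m_k,\dots,m_k)\;\le\;\liminf_k J_{rel}^{N}(\mu;m_k,\dots,m_k)\;=\;J_{rel}^{N}(\mu;\bar m,\dots,\bar m),
\]
for every $\mu\in\A_{rel}$, which is all that is needed; nonemptiness of the best response likewise needs only lower semicontinuity plus compactness, and convexity follows from affinity in $\nu$ as you say. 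So the fixed-point skeleton and the symmetry conclusion are fine, but the proof must replace ``joint continuity of $j^N$'' by ``joint l.s.c.\ in the deviator's variable and continuity in the opponents' variables''.
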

\begin{proof}
It suffices to show the existence of a fixed point of the following set-valued map
$$\A_{rel} \ni \mu \rightrightarrows \SS^N(\mu):= \mbox{argmin}\left\{J^{N}_{rel}(\mu', \mu, \hdots, \mu) \; \big| \; \mu' \in \A_{rel}  \right\}\subseteq \A_{rel}.$$
First note that, as a consequence of \cite[Lemma 5.1.7]{Ambrosiogiglisav}, for every $\mu \in \A_{rel}$ the map $\A_{rel} \ni  \mu' \to J_{rel}^{N}(\mu', \mu, \hdots,\mu) \in \RR$ is lower semicontinuous. Thus, the compactness of $\A_{rel}$ yields that $\SS^N(\mu) \neq \emptyset$ for all $\mu \in \A_{rel}$.   A similar argument implies that $\SS^N(\mu)$ is closed for all $\mu \in \A_{rel}$. Notice also that $\SS^N(\mu)$ is convex for all $\mu \in \A_{rel}$. Let us show that $\SS^N$ is upper semicontinuous. Consider a closed set  $M\subseteq \A_{rel}$ and a sequence $(\mu_n)$ in  $\{ \mu \in \A_{rel} \; | \; \SS^N(\mu) \cap M \neq \emptyset\}$ converging to some $\bar{\mu} \in \A_{rel}$. By definition, there exists $\nu_n \in M$ such that 
\be\label{before_pasing_to_limit_upper_semicontinuity}
J_{rel}^{N}(\nu_{n} ; \mu_n, \hdots, \mu_n) \leq J_{rel}^{N}(\mu' ; \mu_n, \hdots, \mu_n)  \hspace{0.3cm} \forall \; \mu' \in  \A_{rel}.
\ee
Since $M$ is compact (because $\A_{rel}$ is compact), there exists $\bar{\nu} \in M$ such that, up to some subsequence, $\nu_n \to \bar{\nu}$ narrowly. By \cite[Theorem 3.2]{MR0233396} and {\bf(A1)} for all $\mu' \in \A_{rel}$ we have  that $J_{rel}^{N}(\mu' ; \mu_n, \hdots, \mu_n)$ converges to  $J_{rel}^{N}(\mu' ;\bar{\mu}, \hdots, \bar{\mu})$. On the other hand, by \cite[Theorem 3.23]{MR990890} and {\bf(A1)}, the function  $Q_C^{N} \ni (\gamma_1, \hdots, \gamma_N) \to j^N(\gamma_{1}; (\gamma_{j})_{j\geq 2}) \in \RR$ is lower semi-continuous, which, using \cite[Lemma 5.1.7]{Ambrosiogiglisav} again, implies that $J_{rel}^{N}(\bar{\nu} ; \bar{\mu}, \hdots, \bar{\mu}) \leq \liminf_{n \to \infty}J_{rel}^{N}(\nu_{n} ; \mu_n, \hdots, \mu_n) $. Therefore, passing to the limit in \eqref{before_pasing_to_limit_upper_semicontinuity} we obtain that 
$$  J_{rel}^{N}(\bar{\nu} ; \bar{\mu}, \hdots, \bar{\mu}) \leq J_{rel}^{N}(\mu' ;\bar{\mu}, \hdots, \bar{\mu}) \hspace{0.3cm} \forall \; \mu' \in  \A_{rel}, $$
i.e. $\bar{\nu} \in \SS^N(\bar{\mu}) \cap M$, which implies the closedness $\{ \mu \in \A_{rel} \; | \; \SS^N(\mu) \cap M \neq \emptyset\}$  and the upper semicontinuity of $\SS^N$. Using the properties above, the existence of a fixed point for $\SS^N$ follows from the Kakutani-Fan-Glicksberg fixed-point theorem (see e.g. \cite[Corollary 17.55]{MR2378491}).
\end{proof}

\begin{corollary}\label{existence_unique_solution_for_a_e_initial_condition_x} Let $(m_N, \hdots, m_{N})\in  \A_{rel}^{N}$ be a Nash equilibrium for the game defined by the cost $J_{rel}^{N}$ and the set of strategies $\A_{rel}$. Define $\hat{\ell}: \RR^d \times \RR^d \times [0,T] \to \RR$ and $\hat{\Phi}: \RR^d \to \RR$ by 
\be\label{redefinition_costs_using_fubini}
\ba{rcl}
\hat{\ell}(\alpha, x, t)&:=& \int_{Q_{C}^{N-1}} \ell\left(\alpha, x, (\gamma_{j}(t))_{j=2}^{N}\right) \otimes_{j=2}^{N}  \dd m_{N} (\gamma_{j}), \\[6pt]
 \hat{\Phi}(x)&:=& \int_{Q_C^{N-1}} \Phi\left(x, (\gamma_{j}(T))_{j=2}^{N}\right) \otimes_{j=2}^{N}  \dd m_{N} (\gamma_{j}),
\ea
\ee
and assume that for $m_0$-almost every $x\in \RR^d$  the optimization problem 
\be\label{optimization_problem_with_Fubini}
\inf\left\{\int_{0}^{T} \hat{\ell}(\dot{\gamma}(t), \gamma(t), t) \dd t + \hat{\Phi}(\gamma(T)) \; \big| \; \gamma \in Q_C, \; \gamma(0)=x \right\},
\ee
admits a unique solution. Then, there exists $\gamma_{N} \in \A$ such that $m_{N}= \gamma_{N} \sharp m_0$. Moreover, $\gamma_N$ is $m_0$-uniquely determined.

In particular, $(\gamma_N, \hdots, \gamma_N)\in \A^{N}$ is a Nash equilibrium in distributed open-loop strategies.
\end{corollary}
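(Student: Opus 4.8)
The plan is to reduce, by Fubini's theorem, the equilibrium condition \eqref{Nash_equilibrium_inequality_1} for $(m_N,\dots,m_N)$ to the statement that the single measure $m_N$ minimizes a linear functional on $\A_{rel}$, and then, via the disintegration of $m_N$ with respect to the initial position together with the uniqueness hypothesis, to identify $m_N$ with the pushforward of $m_0$ by a deterministic strategy. Set $\hat j(\gamma):=\int_{0}^{T}\hat\ell(\dot\gamma(t),\gamma(t),t)\,\dd t+\hat\Phi(\gamma(T))$, with $\hat\ell,\hat\Phi$ as in \eqref{redefinition_costs_using_fubini}. Using the definition \eqref{redefinition_costs_using_fubini} and Fubini's theorem, $J_{rel}^{N}(\mu';m_N,\dots,m_N)=\int_{Q_C}\hat j\,\dd\mu'$ for every $\mu'\in\A_{rel}$, so \eqref{Nash_equilibrium_inequality_1} is exactly $\int_{Q_C}\hat j\,\dd m_N\le\int_{Q_C}\hat j\,\dd\mu'$ for all $\mu'\in\A_{rel}$. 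From {\bf(A1)}, $\hat\ell$ inherits continuity, convexity in its first argument and the bounds \eqref{bounds_for_ell}, and $\hat\Phi$ is continuous and bounded below; hence $\hat j$ is coercive and lower semicontinuous for the weak topology of $W^{1,q}([0,T];\RR^d)$, as in the proof of Lemma \ref{bounds_l2}.

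Next I would introduce, for $x\in\RR^d$, the solution set $\hat\SS^{N}(x):=\mbox{argmin}\{\hat j(\gamma)\mid\gamma\in\A(x)\}$. By the direct method $\hat\SS^{N}(x)\ne\emptyset$, and comparing a minimizer with the constant trajectory $\bar\gamma^{x}\equiv x$, exactly as at the end of the proof of Lemma \ref{bounds_l2}, shows that for $x\in\mbox{supp}(m_0)$ every element of $\hat\SS^{N}(x)$ satisfies $\int_{0}^{T}|\dot\gamma(t)|^{q}\,\dd t\le C$; thus $\hat\SS^{N}(x)\subseteq Q_C$ and $\hat\SS^{N}(x)$ coincides with the set of solutions of \eqref{optimization_problem_with_Fubini}. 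Repeating, with $\hat\ell,\hat\Phi$ in place of $\ell_i,\Phi_i$, the ``Claim'' in the proof of Lemma \ref{bounds_l2} gives that $x\rightrightarrows\hat\SS^{N}(x)$ has closed values and is upper semicontinuous, hence Borel measurable; by the Kuratowski-Ryll-Nardzewski theorem it admits a Borel selection $\gamma_N$. Then $\gamma_N\in\A$ and $\gamma_N\sharp m_0\in\A_{rel}$.

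The core step is to show $m_N=\gamma_N\sharp m_0$; this is where the uniqueness hypothesis is used. I would disintegrate $m_N=\int_{\RR^d}m_N^{x}\,\dd m_0(x)$ with respect to $e_0$ (licit since $e_0\sharp m_N=m_0$), so that $m_N^{x}$ is concentrated on $Q_C\cap\A(x)$ for $m_0$-a.e.\ $x$. Put $\hat v(x):=\hat j(\gamma_N(x))=\min_{Q_C\cap\A(x)}\hat j$, a Borel function. Then, by change of variables, $\int_{Q_C}\hat j\,\dd(\gamma_N\sharp m_0)=\int_{\RR^d}\hat v(x)\,\dd m_0(x)$, whereas $\int_{Q_C}\hat j\,\dd m_N=\int_{\RR^d}\bigl(\int\hat j\,\dd m_N^{x}\bigr)\dd m_0(x)\ge\int_{\RR^d}\hat v(x)\,\dd m_0(x)$ because $\hat j\ge\hat v(x)$ on $Q_C\cap\A(x)$. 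Minimality of $m_N$ forces equality, so $\int\hat j\,\dd m_N^{x}=\hat v(x)$ for $m_0$-a.e.\ $x$; since the nonnegative function $\hat j-\hat v(x)$ on $Q_C\cap\A(x)$ then has zero $m_N^{x}$-integral, $m_N^{x}$ is concentrated on $\hat\SS^{N}(x)$, which by hypothesis is the singleton $\{\gamma_N(x)\}$ for $m_0$-a.e.\ $x$. Hence $m_N^{x}=\delta_{\gamma_N(x)}$ for $m_0$-a.e.\ $x$, i.e.\ $m_N=\gamma_N\sharp m_0$. Uniqueness up to $m_0$-null sets follows similarly: if $\gamma_N'\in\A$ also satisfies $\gamma_N'\sharp m_0=m_N$, then $\int\hat j(\gamma_N'(x))\,\dd m_0(x)=\int\hat v\,\dd m_0$ with $\hat j(\gamma_N'(x))\ge\hat v(x)$ pointwise, whence $\gamma_N'(x)\in\hat\SS^{N}(x)=\{\gamma_N(x)\}$ for $m_0$-a.e.\ $x$.

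Finally, for the ``in particular'' assertion I would observe that, for $m_0$-a.e.\ $x$, any minimizer of $\hat j$ over $\A(x)$ lies in $Q_C$ (the same comparison argument), hence in $\hat\SS^{N}(x)=\{\gamma_N(x)\}$, so $\gamma_N(x)$ is the unique minimizer of $\hat j$ over all of $\A(x)$. Using $m_N=\gamma_N\sharp m_0$ together with Fubini's theorem and a change of variables one gets $J^{N}(\gamma;(\gamma_N)_{j\ge 2})=\int_{\RR^d}\hat j(\gamma^{x})\,\dd m_0(x)$ for every $\gamma\in\A$; integrating the pointwise inequality $\hat j(\gamma_N(x))\le\hat j(\gamma^{x})$ yields $J^{N}(\gamma_N;(\gamma_N)_{j\ge 2})\le J^{N}(\gamma;(\gamma_N)_{j\ge 2})$ for all $\gamma\in\A$, which by symmetry of the costs is precisely \eqref{Nash_equilibrium_inequality_0}. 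The main obstacle is the core step: turning optimality of the \emph{measure} $m_N$ into a genuine \emph{measurable map} $\gamma_N$. This is exactly where the uniqueness assumption for \eqref{optimization_problem_with_Fubini} is indispensable, combined with the disintegration/measurable-selection machinery and the ``zero integral of a nonnegative function'' argument; the remaining ingredients (coercivity, lower and upper semicontinuity, the a priori $Q_C$-bound) are routine and already present in the proof of Lemma \ref{bounds_l2}.
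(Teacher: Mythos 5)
Your proof is correct and follows essentially the same route as the paper: Fubini reduces the symmetric Nash condition to the linear minimization of $\int_{Q_C}\widehat{j}\,\dd m$ over $\A_{rel}$, and the disintegration of $m_N$ together with a Borel selection of the argmin map, the nonnegative-integrand (zero-integral) argument and the uniqueness hypothesis yields $m_N^x=\delta_{\gamma_N^x}$ for $m_0$-a.e.\ $x$, i.e.\ $m_N=\gamma_N\sharp m_0$. The only (harmless) deviations are that you minimize over all of $\A(x)$ and check a posteriori that minimizers lie in $Q_C$, whereas the paper restricts the argmin to $Q_C$ from the outset, and that you spell out the concluding ``in particular'' step, which the paper leaves implicit.
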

\begin{proof} Define $\widehat{j}: Q_{C} \to \RR$ by
\be\label{definition_hat_j}
\widehat{j}(\gamma):= \int_{0}^{T} \hat{\ell}(\dot{\gamma}(t), \gamma(t), t) \dd t + \hat{\Phi}(\gamma(T)).
\ee
By definition of Nash equilibrium and Fubini's theorem, for all $m \in \A_{rel}$ we have that
\be\label{nash_equilibrium_inequality_particular_case_proof}
J_{rel}^{N}(m_N; m_N, \hdots, m_{N})=\int_{Q_{C}}\widehat{j}(\gamma)\dd m_{N}(\gamma)\leq J_{rel}^{N}(m; m_N, \hdots, m_{N}) = \int_{Q_{C}}\widehat{j}(\gamma)\dd m(\gamma).
\ee
By the desintegration theorem (see e.g. \cite[Theorem 5.3.1]{Ambrosiogiglisav}), there exists a Borel family $\{m_{N}^{x} \; | \; x \in \RR^d\}\subseteq \P(Q_{C})$, such that $m_{N}^x\left(\{\gamma \in Q_{C} \; | \; \gamma(0)=x\}\right)=1$, for $m_0$-almost every $x \in \RR^d$, and 
\be\label{desintegration_optimal_measure}
\int_{Q_{C}}\widehat{j}(\gamma)\dd m_{N}(\gamma)= \int_{\RR^d} \int_{Q_C} \widehat{j}(\gamma)\dd m_{N}^{x}(\gamma) \dd m_0(x).
\ee
Define the set-valued function $\hat{\SS}^N: \RR^d \to 2^{\Gamma}$ by 
$$
\hat{\SS}^N(x):= \mbox{argmin}\left\{ \widehat{j}(\gamma) \; | \; \gamma \in \A(x), \; \; \int_{0}^{T}|\dot{\gamma}(t)|^q\leq C\right\}.$$
Arguing as in the proof of Lemma \ref{bounds_l2}, we have the existence of a Borel measurable selection $\RR^d \ni x \mapsto \gamma_{N}^x \in \hat{\SS}^N(x)$, which, by assumption, is $m_0$-uniquely determined. Moreover, by definition, $\gamma_N\in \A$. Now, let us define $\hat{m}:= \gamma_{N} \sharp m_0 \in \A_{rel}$. 
Then, by \eqref{nash_equilibrium_inequality_particular_case_proof}, taking $m= \hat{m}$, and \eqref{desintegration_optimal_measure}, we have that 
\[
	\int_{\RR^d}\left[ \int_{Q_C} \widehat{j}(\gamma)\dd m_{N}^{x}(\gamma)-\widehat{j}(\gamma_{N}^x) \right] \dd m_0(x)\leq 0.
\]
Since the integrand in the expression above is non-negative, by definition of $\gamma_{N}^x$,  we deduce that $\widehat{j}(\gamma_{N}^x)=  \int_{Q_C} \widehat{j}(\gamma)\dd m_{N}^{x}(\gamma)$ for $m_0$-a.e. $x\in \RR^d$, and, hence, $\widehat{j}(\gamma)= \widehat{j}(\gamma_{N}^x)$ for $m_{N}^x$-a.e. $\gamma  \in Q_{C}$. Since, by assumption, $\hat{\SS}^N(x)=\{\gamma_{N}^x\}$  for $m_0$-a.e.\ $x\in \RR^d$, we deduce that $m^{x}_{N}= \delta_{\gamma_N^x}$ for $m_0$-a.e.\ $x\in \RR^d$, hence $m_{N}=\gamma_{N} \sharp m_0$. The result follows.
\end{proof}
\begin{remark} An example of application of Corollary \ref{existence_unique_solution_for_a_e_initial_condition_x} is provided in Section \ref{PDE_N_players}  below.
\end{remark}

\section{Convergence to a mean field game equilibrium} \label{sect_mfg}
In this section, we study the limit behavior, as $N \to \infty$, of symmetric Nash equilibria  in randomized  distributed open-loop strategies. The existence of such Nash equilibria is ensured by Proposition \ref{nash_equilibria_finite_game}. We begin  by  defining the limit object, i.e.\ the MFG equilibrium. Then we will prove that any cluster point of the sequence $(m_N)$ is a MFG equilibrium. 

Let us define  $J: W^{1,q}([0,T];\RR^d) \times \P_1(\Gamma) \to \RR$ by
\[
	J(\gamma,m):= \int_{0}^{T}  \ell(\dot{\gamma}(t), \gamma(t),  e_{t} \sharp m)  \dd t + \Phi(\gamma(T), e_{T} \sharp m).
\]
It is straightforward to check that $m\in \P_1(\Gamma)$ implies that $e_{t} \sharp m \in \P_1(\RR^d)$ for all $t\in [0,T]$, which implies that $J$ is well-defined.
 
Following the terminology in \cite{mazanti_santambrogio_MFGs}, we consider next the notion of {\it Lagrangian MFG equilibrium} (see e.g. \cite{CH17,MR3644590,cardaliaguet_meszaros_santambrogio_2018,cannarsa_capuani_2018}).

\begin{definition}\label{MFG_equilibrium} We say that $m_{\ast} \in \P_1(\Gamma)$ is a  Lagrangian MFG equilibrium  if $e_0 \sharp m_{\ast}=m_0$ and 
\be\label{def_equilibrium_with_support}
\mbox{{\rm supp}}(m_{\ast}) \subseteq \left\{ \gamma \in W^{1,q}([0,T];\RR^d) \; | \; J(\gamma, m_{\ast}) \leq J(\gamma', m_{\ast}) \; \; \forall \; \gamma' \in W^{1,q}([0,T]; \RR^d), \; \gamma'(0)=\gamma(0) \right\}.
\ee

\end{definition} \smallskip 

Reasoning as in the proof of Lemma \ref{bounds_l2}, assumption   {\bf(A1)} implies that if $m_\ast$ is a Lagrangian MFG equilibrium, then $\mbox{supp}(m_{\ast}) \subseteq Q_{C}$ and, hence, $m_\ast \in \A_{rel}$. Thus, $m_{\ast}$ is a Lagrangian MFG if and only if $m_{\ast} \in \A_{rel}$, $e_0 \sharp m_{\ast}=m_0$ and 
\be\label{def_equilibrium_with_support_QC}
\mbox{{\rm supp}}(m_{\ast}) \subseteq \left\{ \gamma \in Q_{C} \; | \; J(\gamma, m_{\ast}) \leq J(\gamma', m_{\ast}) \; \; \forall \; \gamma' \in Q_C, \; \gamma'(0)=\gamma(0) \right\}.
\ee
We still denote  by $J$ the restriction of $J$ to $Q_{C} \times \A_{rel}$ and recall that $Q_C$, endowed with the topology of uniform convergence,  is a compact set. For later use, let us state the following simple results.
\begin{lemma}\label{some_basic_properties} Suppose that {\bf(A1)} holds. Then, the following assertions hold true:\smallskip\\
{\rm(i)} The relative topology on $\A_{rel}$, as a subset of $\P_1(\Gamma)$, coincides with the topology induced by the narrow convergence. \smallskip\\
{\rm(ii)} The function $J$ is lower semicontinuous in $Q_{C} \times \A_{rel}$.\smallskip\\
{\rm(iii)} For all $\gamma \in Q_{C}$, the function $J(\gamma, \cdot)$ is bounded, uniformly in  $\gamma$, and continuous in $\A_{rel}$. 
\end{lemma}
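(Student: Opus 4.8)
The plan is to treat the three assertions in turn, each being a routine consequence of the compactness set-up already established. For \emph{(i)}: since every $m \in \A_{rel}$ is supported on the fixed compact set $Q_C \subseteq \Gamma$, and $\Gamma$ restricted to $Q_C$ is a compact metric space on which every continuous function is bounded, the $1$-Wasserstein topology and the narrow topology coincide on $\P(Q_C)$; indeed, narrow convergence of measures supported on a common compact set automatically upgrades to $d_1$-convergence because the test functions in the Monge--Kantorovich duality \eqref{monge_rubinsteint_form_d_1} can be truncated to $Q_C$ without loss. I would cite \cite[Proposition 7.1.5]{Ambrosiogiglisav} (as is already done in Remark~\ref{marginal_compactness}) for this equivalence and conclude.

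For \emph{(iii)}: fix $\gamma \in Q_C$. By \eqref{trajectories_on_a_fixed_compact_set}, $\gamma(t) \in K_C$ for all $t$, and by Remark~\ref{marginal_compactness} the curve $t \mapsto e_t \sharp m$ ranges over a compact subset of $C([0,T];\P_1(\RR^d))$ as $m$ varies in $\A_{rel}$; in particular the measures $e_t\sharp m$ all have support in $K_C$. Since $\ell$ and $\Phi$ are continuous (hence bounded on the compact sets $\{(\dot\gamma(t),\gamma(t)) : t\in[0,T]\} \times \{e_t\sharp m : t, m\}$ and $K_C \times \{e_T\sharp m : m\}$), boundedness of $J(\gamma,\cdot)$ uniform in $\gamma \in Q_C$ follows. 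For continuity of $J(\gamma,\cdot)$ on $\A_{rel}$: if $m_n \to m$ narrowly in $\A_{rel}$, then by part (i) (or directly) $e_t\sharp m_n \to e_t\sharp m$ in $\P_1(\RR^d)$ for each $t$, with a uniform-in-$t$ modulus from Remark~\ref{marginal_compactness}; continuity of $\ell$ and $\Phi$ together with dominated convergence (the integrands are uniformly bounded) then gives $J(\gamma,m_n)\to J(\gamma,m)$.

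For \emph{(ii)}: I would argue lower semicontinuity jointly. Let $(\gamma_n, m_n) \to (\gamma, m)$ in $Q_C \times \A_{rel}$, i.e.\ $\gamma_n \to \gamma$ uniformly and $m_n \to m$ narrowly. As in the proof of Lemma~\ref{bounds_l2}, the uniform bound $\int_0^T |\dot\gamma_n|^q \le C$ forces $\dot\gamma_n \rightharpoonup \dot\gamma$ weakly in $L^q$. The terminal term $\Phi(\gamma_n(T), e_T\sharp m_n) \to \Phi(\gamma(T), e_T\sharp m)$ converges by continuity of $\Phi$ and $e_T\sharp m_n \to e_T\sharp m$. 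For the running-cost term, convexity of $\ell(\cdot, x,\mu)$ in the first variable (A1)(i.1), together with continuity in $(x,\mu)$ and the lower bound \eqref{bounds_for_ell}, yields weak lower semicontinuity of $\gamma \mapsto \int_0^T \ell(\dot\gamma(t),\gamma(t),e_t\sharp m_n(t))\,dt$ via a standard Ioffe-type / De Giorgi semicontinuity theorem for integral functionals (e.g.\ \cite[Theorem 3.23]{MR990890}); one handles the $n$-dependence of $e_t\sharp m_n$ by noting it converges uniformly in $t$ to $e_t\sharp m$ with integrands uniformly bounded below, so the perturbation is controlled. Combining gives $\liminf_n J(\gamma_n,m_n) \ge J(\gamma,m)$.

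The main obstacle is the joint lower semicontinuity in \emph{(ii)}, specifically passing to the limit in the running cost when \emph{both} the curve and the measure argument move simultaneously: one cannot directly invoke a fixed-integrand semicontinuity theorem because the integrand $\ell(\cdot,\cdot,e_t\sharp m_n)$ itself varies with $n$. The clean way around this is to first replace $\ell(\dot\gamma_n,\gamma_n,e_t\sharp m_n)$ by $\ell(\dot\gamma_n,\gamma_n,e_t\sharp m)$ up to an error that goes to zero uniformly in $t$ (using the uniform modulus of continuity of $\ell$ on the relevant compact set and the convergence $e_t\sharp m_n\to e_t\sharp m$ uniform in $t$ from Remark~\ref{marginal_compactness}), and then apply the fixed-integrand weak lower semicontinuity result to the functional with frozen measure argument $m$. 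The rest is bookkeeping with the compactness facts already in hand.
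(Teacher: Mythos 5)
Your parts (i) and the terminal-cost part of (iii) match the paper's argument, but in both (iii) and (ii) you treat the velocities as if they ranged in a compact set, and in (ii) this is a genuine gap. Membership in $Q_C$ only gives $\int_0^T|\dot\gamma(t)|^q\,\dd t\le C$; the values $\dot\gamma_n(t)$ are pointwise unbounded, so the set $\{(\dot\gamma(t),\gamma(t)):t\in[0,T]\}$ you invoke for boundedness in (iii) is not compact, and, more seriously, assumption {\bf(A1)} provides no modulus of continuity of $\ell$ in the measure variable that is uniform in $\alpha$ over all of $\RR^d$, which is what your key reduction in (ii) (``replace $\ell(\dot\gamma_n,\gamma_n,e_t\sharp m_n)$ by $\ell(\dot\gamma_n,\gamma_n,e_t\sharp m)$ up to an error that goes to zero uniformly in $t$'') requires. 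The slip in (iii) is harmless: uniform boundedness follows from the upper bound in \eqref{bounds_for_ell} together with $\int_0^T|\dot\gamma|^q\,\dd t\le C$ (compactness is only needed for the $\Phi$-term, since $\gamma(T)\in K_C$ and the marginals lie in $\P(K_C)$), and continuity follows by dominated convergence with dominating function $\overline{c}_\ell|\dot\gamma|^q+C_\ell$, which is the paper's argument. The swap in (ii), however, can genuinely fail: take $\ell(\alpha,x,\mu)=\max\bigl(|\alpha|^q,\,g_{R(\mu)}(|\alpha|)\bigr)$, where $g_R(r)=2r^q$ for $r\le R$ and $g_R$ is continued tangentially (affinely) for $r>R$, and $R(\mu)\in(0,+\infty]$ depends continuously on $\mu$ with $R=+\infty$ at the limit measures; this $\ell$ is continuous, convex in $\alpha$ and satisfies \eqref{bounds_for_ell} with $\underline{c}_\ell=1$, $\overline{c}_\ell=2$. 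If $R(e_t\sharp m_n)=R_n\to\infty$ and $\dot\gamma_n$ is a spike of height $a_n\gg R_n$ on a time set of measure $c\,a_n^{-q}$ (so $\gamma_n\in Q_C$ and $\gamma_n$ converges uniformly), then $\int_0^T\bigl[\ell(\dot\gamma_n,\gamma_n,e_t\sharp m_n)-\ell(\dot\gamma_n,\gamma_n,e_t\sharp m)\bigr]\dd t=-c$ for every $n$: the error does not vanish even after integration, so the reduction to a frozen-measure functional is not available under {\bf(A1)} alone.

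The way out — and what the paper means by citing ``the proof of'' \cite[Theorem 3.23]{MR990890} rather than the theorem itself — is to carry the $n$-dependent measures inside the semicontinuity argument, where they are only ever evaluated together with arguments lying in a compact set. Concretely, run the convexity (affine-minorant/De Giorgi, or subgradient) argument at the frozen \emph{limit velocity}: on the truncated sets $\{t:|\dot\gamma(t)|\le R\}$ the triples $(\dot\gamma(t),\gamma_n(t),e_t\sharp m_n)$ stay in a fixed compact subset of $\RR^d\times K_C\times\P(K_C)$, so joint continuity of $\ell$, the uniform convergence $\sup_{t}d_1(e_t\sharp m_n,e_t\sharp m)\to 0$ from Remark~\ref{marginal_compactness}, local boundedness of the subgradients (or of the affine coefficients) and the weak $L^q$ convergence $\dot\gamma_n\rightharpoonup\dot\gamma$ allow passage to the liminf; one then lets $R\to\infty$ using $\ell\ge-C_\ell$. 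The terminal term converges by continuity exactly as you say. So your overall toolbox (Dacorogna/Ioffe-type semicontinuity plus the compactness facts) is the right one, but the specific two-step reduction you propose for handling the $n$-dependence of the integrand is the one step that does not survive under the stated hypotheses and must be replaced by the adaptation just described.
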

\begin{proof} Assertion {\rm(i)} follows from the fact that both topologies coincide on $\P(Q_{C})$, because $Q_C$ is a compact subset of $\Gamma$.  In particular, if $m_n \to m$ narrowly in $\A_{rel}$, then  for all $t\in [0,T]$ we have that
\be\label{convergence_in_P1}
e_t \sharp m_{n} \to e_t \sharp m \hspace{0.3cm} \mbox{in $\P_1(\RR^d)$}. 
\ee
%
Thus, assertion {\rm(ii)} follows from \eqref{convergence_in_P1}, {\bf(A1)} and the proof of \cite[Theorem 3.23]{MR990890}.  Assertion {\rm(iii)} follows directly from  \eqref{bounds_for_ell},  \eqref{convergence_in_P1} and dominated convergence.
\end{proof}
If $m_\ast$ is a Lagrangian MFG equilibrium, we will denote by $\{m_\ast^x \; | \; x\in \RR^d\}$ the $m_0$-uniquely determined Borel family of probability measures on $Q_{C}$ satisfying that $m_\ast^x(Q_C \setminus \A(x))= 0$ and  $\dd m_{\ast}(\gamma)= \dd m_{\ast}^x(\gamma) \otimes \dd m_0(x)$. The existence of such a family is ensured by the disintegration theorem (see e.g. \cite[Theorem 5.3.1]{Ambrosiogiglisav}). We have the following equivalent characterization of a Lagrangian MFG equilibrium.

\begin{proposition}\label{equivalent_reformulation_mfg_eq} The following assertions are equivalent: \smallskip\\
{\rm(i)} The measure $m_\ast \in \A_{rel}$ is a Lagrangian MFG equilibrium.\smallskip\\
{\rm(ii)} For $m_0$-a.e. $x\in \RR^d$ we have that 
\be\label{definition_SS}
\mbox{{\rm supp}}(m_\ast^x) \subseteq \SS(x):= \mbox{{\rm argmin}}\left\{J(\gamma',m^{\ast}) \; | \; \gamma' \in \A(x),  \; \; \int_{0}^{T}|\dot{\gamma}'(t)|^q\leq C\right\}. 
\ee
{\rm(iii)}  The following inequality holds true: 
\be\label{inequality_mfg_equilibrium}
	\int_{Q_{C}} J(\gamma,m_{\ast})\, \dd m_{\ast}(\gamma) \leq \int_{Q_{C}} J(\gamma,m_{\ast})\, \dd m(\gamma) \quad\text{for all } m\in \A_{rel}.
\ee
\end{proposition}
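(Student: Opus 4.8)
The plan is to prove the three equivalences by a cyclic argument, $(i) \Rightarrow (iii) \Rightarrow (ii) \Rightarrow (i)$, using the disintegration $\dd m_{\ast}(\gamma) = \dd m_{\ast}^{x}(\gamma) \otimes \dd m_0(x)$ throughout. The key structural fact, already recorded above, is that a Lagrangian MFG equilibrium $m_{\ast}$ necessarily satisfies $\mathrm{supp}(m_{\ast}) \subseteq Q_C$, so we may work entirely with the restriction of $J$ to the compact set $Q_C$ and with measures in $\A_{rel}$; moreover, by Lemma~\ref{some_basic_properties}(ii)--(iii), $J$ is lower semicontinuous on $Q_C \times \A_{rel}$ and $J(\cdot, m_{\ast})$ is (for fixed second argument) lower semicontinuous and bounded on $Q_C$, so all integrals below are well-defined.

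For $(i) \Rightarrow (iii)$: assume \eqref{def_equilibrium_with_support_QC}. For each fixed $x$ in the (full-measure) set where the support condition localizes, every $\gamma \in \mathrm{supp}(m_{\ast}^{x})$ is a minimizer of $J(\cdot, m_{\ast})$ over $\{\gamma' \in Q_C : \gamma'(0) = x\}$; hence $J(\gamma, m_{\ast}) = \min_{\gamma' \in \A(x) \cap Q_C} J(\gamma', m_{\ast}) =: v(x)$ for $m_{\ast}^{x}$-a.e.\ $\gamma$, so $\int_{Q_C} J(\gamma, m_{\ast})\, \dd m_{\ast}^{x}(\gamma) = v(x)$. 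For an arbitrary $m \in \A_{rel}$, disintegrate $m$ as $\dd m(\gamma) = \dd m^{x}(\gamma)\otimes \dd m_0(x)$ with $m^{x}$ supported on $\A(x) \cap Q_C$ for $m_0$-a.e.\ $x$; then $\int_{Q_C} J(\gamma, m_{\ast})\, \dd m^{x}(\gamma) \geq v(x)$ for $m_0$-a.e.\ $x$, and integrating the inequality $v(x) \le \int_{Q_C} J(\gamma, m_\ast)\, \dd m^x(\gamma)$ against $m_0$ and using Fubini gives \eqref{inequality_mfg_equilibrium}.

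For $(iii) \Rightarrow (ii)$: this is the crux and follows the pattern used in Corollary~\ref{existence_unique_solution_for_a_e_initial_condition_x}. Arguing exactly as in the proof of Lemma~\ref{bounds_l2} (with $\widehat{j}$ replaced by $J(\cdot, m_{\ast})$, which is lower semicontinuous on $Q_C$), the set-valued map $x \rightrightarrows \SS(x)$ is nonempty-valued, closed-valued, and upper semicontinuous, hence Borel measurable, so by the Kuratowski--Ryll-Nardzewski selection theorem there is a Borel selection $\hat\gamma(x) \in \SS(x)$; then $\hat m := \hat\gamma \sharp m_0 \in \A_{rel}$. Plugging $m = \hat m$ into \eqref{inequality_mfg_equilibrium} and disintegrating both sides yields
\[
\int_{\RR^d}\left[\int_{Q_C} J(\gamma, m_{\ast})\, \dd m_{\ast}^{x}(\gamma) - J(\hat\gamma(x), m_{\ast})\right] \dd m_0(x) \le 0.
\]
Since $\hat\gamma(x)$ realizes $v(x) = \min_{\A(x)\cap Q_C} J(\cdot, m_{\ast})$, the bracketed integrand is $\ge 0$ for $m_0$-a.e.\ $x$, hence $= 0$; therefore $J(\gamma, m_{\ast}) = v(x)$ for $m_{\ast}^{x}$-a.e.\ $\gamma$ and $m_0$-a.e.\ $x$, which says precisely that $\mathrm{supp}(m_{\ast}^{x}) \subseteq \SS(x)$ for $m_0$-a.e.\ $x$.

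For $(ii) \Rightarrow (i)$: if $\mathrm{supp}(m_{\ast}^{x}) \subseteq \SS(x)$ for $m_0$-a.e.\ $x$, then for such $x$ every $\gamma \in \mathrm{supp}(m_{\ast}^{x})$ minimizes $J(\cdot, m_{\ast})$ over $\A(x) \cap Q_C$, and since any competitor $\gamma'$ with $\gamma'(0)=x$ and $J(\gamma', m_{\ast}) < \infty$ automatically lies in $Q_C$ (again by the argument recalled after Definition~\ref{MFG_equilibrium}), $\gamma$ in fact minimizes over all of $\A(x) \subseteq W^{1,q}([0,T];\RR^d)$ with that initial datum. Using that $\mathrm{supp}(m_{\ast}) = \overline{\bigcup_{x \in \mathrm{supp}(m_0)} \mathrm{supp}(m_{\ast}^{x})}$ up to an $m_{\ast}$-null set and the closedness of the minimizing set in \eqref{def_equilibrium_with_support} (which follows from lower semicontinuity of $J(\cdot, m_\ast)$), we conclude \eqref{def_equilibrium_with_support}, i.e.\ $m_{\ast}$ is a Lagrangian MFG equilibrium.

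The main obstacle is the measurable-selection argument in $(iii) \Rightarrow (ii)$: one must verify that the map $x \rightrightarrows \SS(x)$ genuinely inherits closed values and upper semicontinuity from the $W^{1,q}$-weak lower semicontinuity of $J(\cdot, m_{\ast})$ and the $L^q$-coercivity bound, exactly as in Lemma~\ref{bounds_l2}; the only mild subtlety is the dependence on the initial datum, handled via the translation $\gamma \mapsto \gamma - \bar x + x_n$ sending $\A(\bar x) \cap Q_C$-type competitors into $\A(x_n)$, as in that lemma. Everything else is a disintegration-plus-Fubini bookkeeping exercise, together with the standing observation that finite-cost trajectories automatically live in the compact set $Q_C$.
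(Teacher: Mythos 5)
Your argument is correct and is essentially the paper's own proof rearranged: the paper establishes {\rm(i)}$\Leftrightarrow${\rm(ii)} and {\rm(ii)}$\Leftrightarrow${\rm(iii)} with exactly the ingredients you use --- the disintegration $\dd m_\ast(\gamma)=\dd m_\ast^x(\gamma)\otimes \dd m_0(x)$, closedness of $\SS(x)$ and of $\{\gamma\in Q_C \; | \; \gamma\in\SS(\gamma(0))\}$ (argued as in Lemma~\ref{bounds_l2}), a Kuratowski--Ryll-Nardzewski measurable selection of $\SS$ pushed forward by $m_0$, and the nonnegative-integrand argument --- so your cycle {\rm(i)}$\Rightarrow${\rm(iii)}$\Rightarrow${\rm(ii)}$\Rightarrow${\rm(i)} is the same proof in a different order. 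The only imprecision is the parenthetical in your step {\rm(ii)}$\Rightarrow${\rm(i)}: a competitor $\gamma'$ with $\gamma'(0)=x$ and finite cost need \emph{not} lie in $Q_C$; what the reduction recalled after Definition~\ref{MFG_equilibrium} actually rests on is that any competitor with $J(\gamma',m_\ast)$ below the cost of the constant path $\bar{\gamma}^x\in Q_C$ satisfies $\int_0^T|\dot{\gamma}'(t)|^q\,\dd t\le C$ by the coercivity bound in {\bf(A1)} (uniformly in the measure argument), hence lies in $Q_C$; comparing a $Q_C$-minimizer first with $\bar{\gamma}^x$ and then with any such $\gamma'$ yields minimality over all of $\A(x)$, so your conclusion stands with this fix --- and indeed the paper sidesteps the issue entirely by proving the proposition directly with the $Q_C$-formulation \eqref{def_equilibrium_with_support_QC} of statement {\rm(i)}.
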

\begin{proof} Let us prove the equivalence between {\rm(i)} and {\rm(ii)}. Let $m_\ast \in \A_{rel}$ be a Lagrangian MFG equilibrium. If \eqref{definition_SS} does not hold, 
there exists $A \in \B(\RR^d)$, with $m_0(A) >0$,  such that $m_{\ast}^x( \SS(x)^c)>0$ for all $x \in A$. Define the set  $E:=\left\{ \gamma \in Q_{C} \; | \; \gamma(0) \in A, \; \gamma \in  \SS(\gamma(0))^{c} \right\}= e_{0}^{-1}(A) \cap \left\{ \gamma \in Q_{C} \; | \; \gamma \in \SS(\gamma(0))^{c} \right\}$. Arguing as in the proof of the claim in Lemma \ref{bounds_l2}, the set $\left\{ \gamma \in Q_{C} \; | \; \gamma \in \SS(\gamma(0))   \right\}$ is closed in $Q_C$, which implies that $E \in \B(Q_{C})$. 
Since $m_{\ast}(E)= \int_{A} m_{\ast}^{x}(\SS(x)^{c}) \dd m_0(x) >0$, we obtain a  contradiction with \eqref{def_equilibrium_with_support_QC}.  Conversely, suppose that {\rm(ii)} holds and that $m_{\ast}$ is not a Lagrangian MFG equilibrium. Then, defining $E':= \{ \gamma \in Q_C \; | \; \gamma \in \SS(\gamma(0))^c\}$, which is an open set and, hence, belongs to $\B(Q_C)$, we have that
$$
0<m_{\ast}(E')= \int_{\RR^d} \int_{Q_C} \II_{E'}(\gamma) \dd m_{\ast}^{x}(\gamma) \dd m_0(x), 
$$
which is impossible because \eqref{definition_SS} implies that the r.h.s. above is equal to $0$. 

Let us now prove that {\rm(ii)} $\Leftrightarrow$ {\rm(iii)}.  Notice  that   
\be\label{disintegration_i}
\int_{Q_{C}} J(\gamma,m_{\ast})\, \dd m_{\ast}(\gamma)=  \int_{\RR^d} \int_{Q_C}J(\gamma,m_{\ast}) \dd m_{\ast}^{x}(\gamma) \dd m_0(x).
\ee
Analogously, given $m \in \A_{rel}$, we disintegrate it w.r.t. $m_{0}$ and write $\dd m(\gamma)= \dd m^x(\gamma) \otimes \dd m_0(x)$, where $\{m^x \; | \; x\in \RR^d\}$ is a $m_0$-uniquely determined Borel family  of probability measures on $Q_C$ such that $m^x(Q_C \setminus \A(x))= 0$ for $m_0$-a.e. $x\in \RR^d$.  Thus,  
\be\label{disintegration_ii}
\int_{Q_{C}} J(\gamma,m_{\ast})\, \dd m(\gamma)=  \int_{\RR^d} \int_{Q_C}J(\gamma,m_{\ast}) \dd m^{x}(\gamma) \dd m_0(x).
\ee
If  {\rm(ii)} holds, then for $m_0$-a.e. $x\in \RR^d$ and $m_{\ast}^x$-a.e. $\gamma \in Q_C$ we have
\be\label{pointwise_inequality_in_x} J(\gamma,m_{\ast}) \leq J(\gamma',m^{\ast}) \hspace{0.5cm} \forall \; \gamma' \in \A(x) \cap Q_C.\ee
Integrating both sides of \eqref{pointwise_inequality_in_x}, first with respect to $ \dd m^{x}(\gamma')$ and then with respect to $\dd m_{\ast}^{x}(\gamma)$, and using \eqref{disintegration_i}-\eqref{disintegration_ii} we obtain \eqref{inequality_mfg_equilibrium}. Conversely,  using the notations introduced above, suppose that  \eqref{inequality_mfg_equilibrium} holds and let $\hat{\gamma} \in \A$ be a Borel measurable selection of $\SS$ (the existence of such selection can be justified arguing exactly as in the proof of Lemma \ref{bounds_l2}).
Then, taking the measure $m\in \A_{rel}$ defined by $\dd m(\gamma)= \dd \delta_{\hat{\gamma}^x}(\gamma) \otimes \dd m_0(x)$ in \eqref{inequality_mfg_equilibrium} and using  \eqref{disintegration_i}, we deduce that 
\[
	\int_{Q_C}J(\gamma,m_{\ast}) \dd m_{\ast}^{x}(\gamma)= J(\gamma^x,m_{\ast}) \hspace{0.3cm} \mbox{for $m_0$-a.e. } x\in \RR^d,
\]
and, hence, $m^x_\ast$-almost every $\gamma$ belongs to $\SS(x)$. The conclusion follows. 
\end{proof}
Now, consider the symmetric $N$-player game defined in Section \ref{n_player_game},  with randomized  distributed open-loop strategies, and let  $(m_{N}, \hdots, m_{N}) \in \A_{rel}^{N}$ be a symmetric equilibrium.  Our main result in this section, stated in the next theorem, shows that any limit point of this sequence is a Lagrangian MFG equilibrium.
%
%

\begin{theorem}\label{main_result}
Suppose that {\bf(A1)} holds. Then any limit point $m_{\ast}$ of $(m_{N})_{N\in \NN}$ {\rm(}there exist at least one{\rm)} is a Lagrangian MFG equilibrium. Moreover, if $(m_{N_k})_{k\in \NN}$ is a subsequence of $(m_{N})_{N\in \NN}$ converging to $m_{\ast}$, then  $\sup_{t\in [0,T]} d_1\left(e_t \sharp m_{N_k}, e_t \sharp m_{\ast}\right) \to 0$ as $k\to \infty$. 
\end{theorem}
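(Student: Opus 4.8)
My plan is to verify characterization (iii) of Proposition~\ref{equivalent_reformulation_mfg_eq}, namely that $\int_{Q_C}J(\gamma,m_\ast)\,\dd m_\ast(\gamma)\le\int_{Q_C}J(\gamma,m_\ast)\,\dd m(\gamma)$ for every $m\in\A_{rel}$. Since $\A_{rel}$ is compact by Lemma~\ref{Lemma_relaxed_set_of_measures} (and, by Lemma~\ref{some_basic_properties}(i), the relative $\P_1(\Gamma)$-topology there is the narrow one), $(m_N)$ does have limit points, all of them in $\A_{rel}$; I fix a subsequence, still written $(m_N)$, with $m_N\to m_\ast$ narrowly, so that $e_0\sharp m_\ast=m_0$. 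Writing the symmetric Nash inequality \eqref{Nash_equilibrium_inequality_1} as $J_{rel}^N(m_N;m_N,\dots,m_N)\le J_{rel}^N(m;m_N,\dots,m_N)$ for every $m\in\A_{rel}$, the whole statement reduces to the two limit relations
\[
  \int_{Q_C}J(\gamma,m_\ast)\,\dd m_\ast(\gamma)\ \le\ \liminf_{N\to\infty}J_{rel}^N(m_N;m_N,\dots,m_N)
  \qquad\text{and}\qquad
  \lim_{N\to\infty}J_{rel}^N(m;m_N,\dots,m_N)=\int_{Q_C}J(\gamma,m_\ast)\,\dd m(\gamma)
\]
for each fixed $m\in\A_{rel}$; letting $N\to\infty$ in the Nash inequality then yields (iii).

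The quantitative heart is a law of large numbers for the empirical measures. On an auxiliary probability space I would take $\gamma_1^N,\dots,\gamma_N^N$ i.i.d.\ with law $m_N$ (valued in $Q_C$), set $\mu^N_t:=\tfrac{1}{N-1}\sum_{j=2}^N\delta_{\gamma_j^N(t)}$, and note that, directly from the definitions, $J_{rel}^N(m_N;m_N,\dots,m_N)=\EE\!\left[F(\gamma_1^N,\mu^N_\cdot)\right]$ and $J_{rel}^N(m;m_N,\dots,m_N)=\int_{Q_C}\EE\!\left[F(\gamma,\mu^N_\cdot)\right]\dd m(\gamma)$, where $F(\gamma,\nu_\cdot):=\int_0^T\ell(\dot\gamma(t),\gamma(t),\nu_t)\,\dd t+\Phi(\gamma(T),\nu_T)$ and $J(\gamma,m_\ast)=F(\gamma,e_\cdot\sharp m_\ast)$. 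Fixing a countable family $\{h_k\}\subseteq C_b(Q_C)$ with $\|h_k\|_\infty\le1$ that metrizes narrow convergence on $\P(Q_C)$, the variable $\tfrac{1}{N-1}\sum_{j=2}^N h_k(\gamma_j^N)$ has mean $\int h_k\,\dd m_N$ and variance $\le 1/(N-1)\to0$; combining Chebyshev's inequality with $\int h_k\,\dd m_N\to\int h_k\,\dd m_\ast$ and summing over $k$ gives $\tfrac{1}{N-1}\sum_{j=2}^N\delta_{\gamma_j^N}\to m_\ast$ in probability in $\P(Q_C)$. Since trajectories in $Q_C$ are equi-H\"older in $t$ (Remark~\ref{marginal_compactness}), the map $\mathfrak n\mapsto(t\mapsto e_t\sharp\mathfrak n)$ is continuous from $\P(Q_C)$ into $C([0,T];\P_1(K_C))$, whence $\mu^N_\cdot\to e_\cdot\sharp m_\ast$ in probability in $C([0,T];\P_1(K_C))$, with $K_C$ the compact set from \eqref{trajectories_on_a_fixed_compact_set} (so that $\P_1(K_C)$ is compact).

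I would then record three facts about $F$ on $Q_C\times C([0,T];\P_1(K_C))$: (a) $F$ is uniformly bounded there, since the running part lies in $[-C_\ell T,\ \overline{c}_\ell C+C_\ell T]$ by \eqref{bounds_for_ell} and the definition of $Q_C$, while $\Phi$ is bounded on the compact $K_C\times\P_1(K_C)$ and bounded below by {\bf(A1)}; (b) for each fixed $\gamma\in Q_C$ the map $\nu_\cdot\mapsto F(\gamma,\nu_\cdot)$ is continuous, by dominated convergence in the running term with integrable majorant $\overline{c}_\ell|\dot\gamma(\cdot)|^q+C_\ell$; (c) $F$ is jointly lower semicontinuous — the terminal term is continuous, and for the running term one combines the locally uniform convergence $\ell(\cdot,\cdot,\nu^n_t)\to\ell(\cdot,\cdot,\nu_t)$ (when $\nu^n_\cdot\to\nu_\cdot$ uniformly, by uniform continuity of $\ell$ on compacts) with the weak lower semicontinuity of $\gamma\mapsto\int_0^T\ell(\dot\gamma(t),\gamma(t),\nu_t)\,\dd t$ furnished by the convexity {\bf(A1)(i.1)}, as in \cite[Theorem 3.23]{MR990890}. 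The equality limit then follows from (a)--(b) and the law of large numbers: for fixed $\gamma$, $F(\gamma,\mu^N_\cdot)\to J(\gamma,m_\ast)$ in probability, hence $\EE[F(\gamma,\mu^N_\cdot)]\to J(\gamma,m_\ast)$ (bounded convergence), and a further bounded convergence in $\gamma$ against the fixed $m$ concludes. The lower-bound limit follows from (a), (c) and the law of large numbers: since $\gamma_1^N$ is independent of $(\gamma_j^N)_{j\ge2}$, the laws of $(\gamma_1^N,\mu^N_\cdot)$ converge narrowly to $m_\ast\otimes\delta_{e_\cdot\sharp m_\ast}$, and the Portmanteau theorem applied to the lower semicontinuous, bounded-below $F$ gives $\liminf_N\EE[F(\gamma_1^N,\mu^N_\cdot)]\ge\int_{Q_C}F(\gamma,e_\cdot\sharp m_\ast)\,\dd m_\ast(\gamma)=\int_{Q_C}J(\gamma,m_\ast)\,\dd m_\ast(\gamma)$.

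Combining the two relations with the Nash inequality gives (iii), so $m_\ast$ is a Lagrangian MFG equilibrium. For the last assertion, $m_{N_k}\to m_\ast$ narrowly on the compact set $Q_C$ yields $e_t\sharp m_{N_k}\to e_t\sharp m_\ast$ in $\P_1(K_C)$ for each $t$ ($e_t$ continuous, and narrow convergence equals $d_1$-convergence on $\P(K_C)$), while by Remark~\ref{marginal_compactness} the curves $t\mapsto e_t\sharp m_{N_k}$ stay in a compact subset of $C([0,T];\P_1(\RR^d))$; hence the pointwise convergence upgrades to $\sup_{t\in[0,T]}d_1(e_t\sharp m_{N_k},e_t\sharp m_\ast)\to0$. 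The step I expect to be the main obstacle is the joint lower semicontinuity (c) of $F$ — i.e.\ controlling the running cost when the velocity variable concentrates while the measure argument is simultaneously moving — which is precisely where the convexity {\bf(A1)(i.1)} and the growth bounds \eqref{bounds_for_ell} are indispensable.
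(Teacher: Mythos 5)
Your proposal is correct and follows essentially the same route as the paper's proof: the probabilistic representation of the relaxed costs via i.i.d.\ samples with law $m_N$, a variance/law-of-large-numbers argument showing the empirical measures (excluding the tagged player) converge to the deterministic limit $m_\ast$, lower semicontinuity plus joint convergence (by independence) for the $\liminf$ on the equilibrium side, continuity and bounded convergence for a fixed deviation $m$, passage to the limit in the Nash inequality, identification via Proposition~\ref{equivalent_reformulation_mfg_eq}(iii), and Remark~\ref{marginal_compactness} for the uniform-in-time convergence of the marginals. The only differences are cosmetic: you establish convergence in probability of the empirical measures directly with a metrizing family and Chebyshev (the paper extracts limits in $\P(\P(Q_C))$ and identifies them with a measure-determining family), and you reprove the regularity of the cost functional that the paper isolates in Lemma~\ref{some_basic_properties}.
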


\begin{proof} We only prove the first assertion, since, having this result, the second one follows directly from Remark \ref{marginal_compactness}. For $N\geq 2$, let $Y^{N}_{1},\ldots,Y^{N}_{N}$ be independent and identically distributed (i.i.d.) $Q_{C}$-valued random variables with common distribution $m_{N}$ defined on some probability space $(\Omega_{N},\mathcal{F}_{N},\mathbf{P}_{N})$. We denote by $\mathbf{E}_{N}$ the expectation with respect to $\mathbf{P}_N$. For $i\in \{1,\ldots,N \}$, let $\mu^{N,i}$ denote the (random) empirical measure of $Y^{N}_{1},\ldots,Y^{N}_{N}$ excluding $Y^{N}_{i}$, that is,
\[
	\mu^{N,i}(\omega) := \frac{1}{N-1} \sum_{j\neq i} \delta_{Y^{N}_{j}(\omega)} \in \P(Q_C) \hspace{0.5cm} \forall \;  \omega\in \Omega_{N}.
\]
Notice that $Y^{N}_{i}$ and $\mu^{N,i}$ are independent for every $i$, while $\mu^{N,1},\ldots,\mu^{N,N}$ are identically distributed (not independent in general) with common distribution depending on $N$. Moreover, for every $i\in \{1,\ldots,N \}$,
\be\label{representation_in_terms_of_Y_N}
	J_{rel}^N(m_{N}; m_{N}, \ldots, m_{N}) = \mathbf{E}_{N} \left[ J\left(Y^{N}_{i}, \mu^{N,i} \right) \right].
\ee
Let $(m_{N_{k}})_{k\in \NN}$ be a converging subsequence of $(m_{N})_{N\in \NN}$ with limit $m_{\ast}$ for some $m_{\ast} \in \A_{rel}$. The existence of such a subsequence follows from the compactness of $\A_{rel}$.  Let us prove that $(\mu^{N_k,1})_{k\in \NN}$ converges in distribution to the deterministic limit $m_\ast$.  Let $\mathcal{T} \subset C_{b}(Q_{C})$ be countable and measure determining (or separating). Thus, $\mathcal{T}$ is a countable collection of bounded continuous functions on $Q_{C}$ such that two probability measures $\nu, \tilde{\nu}\in \P(Q_{C})$ are equal whenever $\int \psi\, d\nu = \int \psi\, d\tilde{\nu}$ for all $\psi \in \mathcal{T}$. Observe that $\mathcal{T}$ can be chosen countable since $Q_{C}$ is a Polish space, hence separable, under the supremum  norm topology.

For  $\psi\in \mathcal{T}$ set
\begin{align*}
	& m^{N}_{\psi} := \int_{Q_{C}} \psi(\gamma)\, \dd m_{N}(\gamma),&  & v^{N}_{\psi} := \mathbf{E}_{N} \left[ \left( \int_{Q_{C}} \psi(\gamma)\, \dd \mu^{N,1}(\gamma) - m^{N}_{\psi} \right)^{2} \right].&
\end{align*}
By construction and symmetry, for every $i\in \{1,\ldots,N \}$,
\begin{align*}
	& m^{N}_{\psi} = \mathbf{E}_{N} \left[ \int_{Q_{C}} \psi(\gamma)\, \dd \mu^{N,i}(\gamma) \right] = \mathbf{E}_{N} \left[ \psi(Y^{N}_{i}) \right], &
	& v^{N}_{\psi} = \mathbf{E}_{N} \left[ \left( \frac{1}{N-1} \sum_{j\neq i} \psi(Y^{N}_{j}) - m^{N}_{\psi} \right)^{2} \right]. &
\end{align*}
Since $Y^{N}_{1},\ldots,Y^{N}_{N}$ are independent and the functions in $\mathcal{T}$ are bounded, it follows that
\begin{equation} \label{EqProofConvergence}
	v^{N}_{\psi} = \frac{1}{(N-1)^2} \sum_{j = 2}^{N} \mathbf{E}_{N} \left[ \left( \psi(Y^{N}_{j}) - m^{N}_{\psi} \right)^{2} \right] \stackrel{N\to \infty}{\longrightarrow} 0 \quad\text{for every } \psi\in \mathcal{T}.
\end{equation}
Moreover, considering the subsequence $(m_{N_k})_{k\in \NN}$, we have that
\be\label{convergence_of_the_means}
	m^{N_{k}}_{\psi} \stackrel{k\to \infty}{\longrightarrow} \int_{Q_{C}} \psi(\gamma)\, \dd m_{\ast}(\gamma) =: m^{\ast}_{\psi} \quad\text{for every } \psi\in \mathcal{T}.
\ee
Since $\P(Q_{C})$ is compact, the set  $\P(\P(Q_{C}))$ is also compact. Thus, there exists a sub-subsequence $(N_{k_l})_{l\in \NN}$ such that $(\mu^{N_{k_l},1})_{l\in \NN}$ converges in distribution to $\mu$ for some $\P(Q_{C})$-valued random variable $\mu$ defined on some probability space $(\Omega,\mathcal{F},\mathbf{P})$. We denote by $\mathbf{E}$ the expectation under $\mathbf{P}$.  Then, by the continuous mapping theorem (see e.g. \cite[Theorem 5.1]{MR0233396}) and \eqref{convergence_of_the_means}, we have that
\[
	\lim_{l\to\infty} v^{N_{k_l}}_{\psi} = \mathbf{E} \left[ \left( \int_{Q_{C}} \psi(\gamma)\, \dd\mu(\gamma) - m^{\ast}_{\psi} \right)^{2} \right] \quad\text{for every } \psi\in \mathcal{T}.
\]
On the other hand, thanks to \eqref{EqProofConvergence}, $\lim_{l\to\infty} v^{N_{k_l}}_{\psi} = 0$. It follows that for every $\psi\in \mathcal{T}$,
\[
	\int_{Q_{C}} \psi(\gamma)\, \dd \mu(\gamma) = m^{\ast}_{\psi} =  \int_{Q_{C}} \psi(\gamma)\, \dd m_{\ast}(\gamma)\quad \mathbf{P}\text{-almost surely.}
\] 
Since $\mathcal{T}$ is countable, we have the existence of $A\in \mathcal{F}$ such that $\mathbf{P}(A) = 1$ and for every $\omega\in A$,
\[
	\int_{Q_{C}} \psi(\gamma)\, \dd \mu_{\omega}(\gamma) = \int_{Q_{C}} \psi(\gamma)\, \dd m_{\ast}(\gamma) \quad \text{for all } \psi\in \mathcal{T}.
\] 
Since $\mathcal{T}$ is measure determining, we find that
\[
	\mu_{\omega} = m_{\ast} \quad \text{for $\mathbf{P}$-almost all } \omega \in \Omega.
\]
As we can always choose converging (sub-)subsequences, we deduce that $(\mu^{N_{k},1})_{k\in \NN}$ converges in distribution to $m_{\ast}$.   By independence of $Y^{N_k}_{1}$ and $\mu^{N_k,1}$, and using \cite[Theorem 3.2]{MR0233396}, we find that
\[
	\left( Y^{N_k}_{1}, \mu^{N_k,1} \right)\sharp \mathbf{P}_{N_{k}} = m_{N_k} \otimes \left( \mu^{N_k,1} \right)\sharp \mathbf{P}_{N_{k}} \stackrel{k\to\infty }{\longrightarrow} m_{\ast} \otimes \delta_{m_{\ast}}.
\]
This implies, thanks to \eqref{representation_in_terms_of_Y_N}, Lemma \ref{some_basic_properties}{\rm(ii)}   and \cite[Lemma 5.1.7]{Ambrosiogiglisav}, that  
\be\label{limit_lhs}
	\liminf_{k\to \infty} J_{rel}^{N_k}(m_{N_k}; m_{N_k}, \ldots, m_{N_k}) = \liminf_{k\to \infty} \mathbf{E}_{N_k} \left[ J\left(Y^{N_k}_{1}, \mu^{N_k,1} \right) \right] \geq \int_{Q_{C}} J(\gamma,m_{\ast})\, \dd m_{\ast}(\gamma).
\ee
Let $m\in \A_{rel}$. By Lemma \ref{some_basic_properties}{\rm(iii)} and dominated convergence, it follows that
\be\label{limit_rhs}
	J_{rel}^{N_k}(m; m_{N_k}, \ldots, m_{N_k}) = \int_{Q_{C}} \mathbf{E}_{N_k} \left[ J\left(\gamma, \mu^{N_k,1} \right) \right] \dd m(\gamma) \stackrel{k\to \infty}{\longrightarrow} \int_{Q_{C}} J(\gamma,m_{\ast})\, \dd m(\gamma).
\ee
Passing to the limit in the Nash equilibrium inequality 
$$	J_{rel}^{N_k}(m_{N_k}; m_{N_k}, \ldots, m_{N_k}) \leq J_{rel}^{N_k}(m; m_{N_k}, \ldots, m_{N_k}),$$ 
and using \eqref{limit_lhs}-\eqref{limit_rhs}, we obtain that \eqref{inequality_mfg_equilibrium} holds. The result now follows from Proposition \ref{equivalent_reformulation_mfg_eq}. 
%
%
\end{proof}

\begin{remark}\label{existence_proof_remark}
In particular, under assumption {\bf(A1)},  Theorem~\ref{main_result} ensures  the existence of at least one Lagrangian \mbox{MFG} equilibrium.
\end{remark}

\section{The first order Mean Field Game system} \label{sect_pde_approach}

In this section,  we first discuss, following \cite{CH17}, the relation between the notion of Lagrangian MFG equilibrium and the first order PDE system introduced by Lasry and Lions in \cite[Section 2.5]{LasryLions07} for some particular data. Next, in Section \ref{PDE_N_players}, we consider symmetric randomized distributed open-loop Nash equilibria for the $N$-player game, which, thanks to Corollary \ref{existence_unique_solution_for_a_e_initial_condition_x} and Assumption {\bf(A2)} below, can be identified with symmetric  distributed open-loop Nash equilibria (non-randomized).   Arguing as in the MFG limit, we connect these equilibria with a  first order PDE system which is similar to the one appearing in the limit case. Consequently,  analytic techniques can also be used in order study the limit behavior of these equilibria as the number of players tends to infinity.

Let $L: \RR^d \times \RR^d \to \RR$,  $f$, $g: \RR^d \times \P_{1}(\RR^d)\to \RR$ and $m_0 \in \P_{1}(\RR^d)$  satisfying that \medskip\\
{\bf(A2)}{\rm(i)} The function $L$ belongs to $C^2(\RR^d \times \RR^d)$, is bounded from below and \smallskip\\
{\rm(i.1)}   there exist $C_L>0$, $\overline{L}>0$ such that 
\be\label{uniform_quadratic_bound_from_above}
L(\alpha,x) \leq  \overline{L}|\alpha|^2 +C_L \hspace{0.4cm} \forall \;  \alpha, \; x \in \RR^d.
\ee
{\rm(i.2)} There exist  $c_L, \; c_{L}'>0$ such that 
\be\label{assumptions_derivatives_L}\ba{c}
\partial_{\alpha, \alpha}^{2} L(\alpha,x)(\alpha', \alpha') \geq c_{L} |\alpha'|^2 \hspace{0.4cm} \forall \  \alpha, \; \alpha' \in \RR^d, \; x\in \RR^d, \\[8pt]
| \partial_x L(\alpha,x) | \leq c_{L}'(1 + |\alpha|^2) \hspace{0.4cm} \forall  \; \alpha, \; x \in \RR^d.
\ea\ee
{\rm(ii)} The functions $f$ and $g$ are continuous. Moreover,  for every $m\in \P_1(\RR^d)$ the functions $f(\cdot,m)$ and $g(\cdot,m)$ belong to $C^2(\RR^d)$  and there exists a constant $C_{f,g}>0$ such that 
$$
\sup_{m \in \P_1(\RR^d)} \left\{\|f(\cdot,m)\|_{\C^2}  +\|g(\cdot,m)\|_{\C^2}\right\}  \leq C_{f,g}, 
$$
where, for $h=f$, $g$,  we have set 
$$\|h(\cdot,m)\|_{\C^2}:= \sup_{x\in \RR^d} \left\{ | h(x,m)| +\sum_{i=1}^{d}| \partial_{x_i}h(x,m)| + \sum_{i,j=1}^{d}| \partial_{x_i,x_j}h(x,m)|\right\}.$$
{\rm(iii)} The measure $m_0$ is absolutely continuous w.r.t. to the Lebesgue measure $\L^d$, with density still denoted by $m_0$,  and has a compact support.  \smallskip\\

A typical example of function $L$ satisfying {\bf(A2)}{\rm(i)} is given by $\RR^d \times \RR^d \ni (\alpha, x) \to L(\alpha,x):= b_1(x)|\alpha|^2+b_2(x)$, where, for $i=1$, $2$, $b_i \in C^2(\RR^d)$, $b_i$ is Lipschitz, and there exist constants $\underline{b}_1>0$, $\underline{b}_2 \in \RR$ and $\overline{b}_i>0$ such that $\underline{b}_i \leq b_i \leq \overline{b}_i$.
\begin{remark}{\rm(i)} Assumption {\bf (A2)}{\rm(i)} above implies the convexity of $L(\cdot,x)$ and the existence of $\underline{L}$, $C_L'$, $c^{''}_L>0$ such that 
\be\label{bounds_for_L_below}\ba{rcl}
L(\alpha, x) \geq \underline{L} |\alpha|^2 - C_L'  \hspace{0.4cm} \forall \;    \alpha, \; x  \in \RR^d, \\[6pt]
| \partial_\alpha L(\alpha,x) | \leq c_{L}^{''}(1 + |\alpha|) \hspace{0.4cm} \forall  \; \alpha, \; x \in \RR^d.
\ea
\ee
{\rm(ii)} For $x\in \RR^d$ let us denote by   $H(\cdot,x)$ the convex conjugate of $L(\cdot,x)$. The bound in ${\rm(i.1)}$ and the first bound in \eqref{bounds_for_L_below} imply the exitence  of constants $\underline{H}$, $\overline{H}$ and $C_H>0$ such that   
\be\label{growht_inequality_for_H_ast}
\underline{H}|\xi|^{2} - C_H \leq H(\xi,x) \leq \overline{H}|\xi|^{2} + C_H \hspace{0.3cm} \forall \;  \xi, \; x \in \RR^d.
\ee
{\rm(iii)} By the first estimate in \eqref{assumptions_derivatives_L} we have that $\partial_{\xi} H(\xi,x)$ is characterized  as the unique solution $\alpha(\xi,x)$   of the optimization problem $\max_{\alpha \in \RR^d}\{ \xi \cdot \alpha - L(\alpha,x)\}$. As a consequence of this fact and the first relation in \eqref{bounds_for_L_below}, we obtain the existence of $c_H>0$ such that  
\be\label{linear_growth_partial_H}|\partial_{\xi} H(\xi,x)| \leq c_H(1+|\xi|) \hspace{0.3cm} \forall \;  \xi, \; x \in \RR^d.\ee
Moreover, from the convexity of $L(x,\cdot)$, for all $\xi$, $x\in \RR^d$ we have that  $\partial_{\xi} H(\xi,x)$ is the unique solution to
\be\label{characterization_partial_H}
\partial_{\alpha} L(\partial_{\xi} H(\xi,x), x)= \xi.  
\ee
Using the relation above, the regularity $L \in C^2(\RR^d \times \RR^d)$, the first estimate in \eqref{assumptions_derivatives_L}  and the implicit function theorem, we  obtain that  $\RR^d \times \RR^d \ni (\xi,x) \to \partial_{\xi}H(\xi,x) \in \RR^d$ belongs to $C^1(\RR^d \times \RR^d; \RR^d)$. Using this fact, we get that $\RR^d \times \RR^d \ni (\xi, x) \mapsto \partial_{x} H(\xi,x)= - \partial_{x}L(\partial_{\xi}H(\xi,x),x) \in \RR^d$ is also of class $C^1$. As a consequence $H$ is of class $C^2$.  
\end{remark}
Let us define $\ell: \RR^d \times \RR^d \times \P_1(\RR^d) \to \RR$ and $\Phi : \RR^d \times \P_1(\RR^d) \to \RR$ by 
\be\label{data_regular_case}
\ell\left( \alpha,x,\mu\right):= L(-\alpha,x) + f(x, \mu) \hspace{0.3cm} \mbox{and } \; \; \Phi(x,\mu):=g(x, \mu).
\ee
Clearly, Assumption {\bf(A2)} implies that  $\ell$, $\Phi$ and $m_0$ satisfy {\bf(A1)}.  

\subsection{Lagrangian MFG equilibria and the MFG PDE system}\label{equivalence_both_mfg_formulations}

As pointed out in \cite{CH17}, under {\bf(A2)} the existence of a Lagrangian  equilibrium for the MFG problem defined  by $\ell$, $\Phi$ and $m_0$, is equivalent  the existence of a solution $(u, \rho )$ of the following PDE system, which was first introduced in \cite{LasryLions07},
$$\left.\ba{l}
- \partial_t u  +  H(\nabla u, x)  =  f(x,\rho(t)) \hspace{0.3cm} \mbox{in } \RR^d  \times (0,T), \\[6pt]
\partial_t \rho- \mbox{div}\left( \partial_{\xi} H( \nabla u, x) \rho \right)  = 0 \hspace{0.3cm} \mbox{in } \RR^d  \times (0,T),\\[6pt]
u(\cdot,T)= g(\cdot,\rho(T)), \; \; \rho(0)=m_0 \hspace{0.3cm} \mbox{in }   \RR^d . 
\ea \right\} \eqno(MFG)
$$
In the system above,  $u:\RR^d \times [0,T] \to \RR$ is a solution to the first equation, with the associated terminal condition, if it is globally Lipschitz,  locally semi-concave with respect to its first argument (see \cite[Section 2]{CannSinesbook}), uniformly in $t\in [0,T]$,  and the equation holds in the viscosity sense (see e.g.  \cite[Chapter III, Section 3]{BardiCapuzzo96}). In $(MFG)$,  $\nabla u$ denotes a Borel measurable selection of the set-valued map
$$
\RR^d \times [0,T] \ni (x,t) \mapsto D_{x}^{+}u(x,t):= \left\{p \in \RR^d \, | \; \limsup_{x' \to x} \frac{u(x',t)-u(x,t) - p\cdot (x'-x)}{|x'-x|} \leq 0\right\} \subseteq \RR^d. 
$$ 
The existence of such measurable selection follows from the fact that the above set-valued map has a closed graph (thanks to the semi-concavity property of $u$, see e.g. \cite[Proposition 3.3.4]{CannSinesbook}). Moreover, since $u$ is Lipschitz, $\nabla u(x,t)$ is uniformly bounded in $(x,t) \in \RR^d \times [0,T]$.  

The function $\rho: [0,T] \to \P(\RR^d)$ is a solution to the second equation, with the associated initial condition,  if $\rho \in C([0,T];\P_1(\RR^d))$, and the equation is satisfied in the sense of distributions, i.e.  for all $\phi \in C^{\infty}(\RR^d)$ with compact support we have
\be\label{definition_distributional_solution}
\int_{\RR^d} \phi(x) \dd \rho(t)(x)= \int_{\RR^d}  \phi(x) \dd m_0(x) - \int_{0}^{t}\int_{\RR^d} \partial_{\xi} H( \nabla u(x,s), x) \cdot \nabla \phi(x) \dd \rho(s)(x) \dd s.
\ee
Note that by the previous considerations, the second term in the right-hand-side of \eqref{definition_distributional_solution} is well-defined.

Any pair $(u,\rho)$ satisfying $(MFG)$  is  called an {\it equilibrium of the first order MFG problem}. 

For the sake of completeness, let us provide the main arguments that justify the equivalence between both notions of equilibria.  Let $m_{\ast} \in \A_{rel}$ be a Lagrangian MFG equilibrium.  Let us define $\rho: [0,T] \to \P(\RR^d)$ by $\rho(t):= e_t \sharp m_\ast$  for all $t\in [0,T]$, and {\it the value function} $u: \RR^d \times [0,T] \to \RR$ by 
\be\label{representation_soluion_HJB}
u(x,t) := \inf\left\{ \int_{t}^{T}  \ell(\dot{\gamma}(s), \gamma(s),  \rho(s))  \dd s + \Phi(\gamma(T),\rho(T)) \; \big| \; \; \gamma \in  W^{1,2}([t,T]; \RR^d)   \; \; \mbox{and } \;  \gamma(t)=x   \; \right\},
\ee
for all $(x,t) \in \RR^d \times [0,T]$.  Since $m_\ast \in \A_{rel}$, we have that  $\rho \in C([0,T];\P_1(\RR^d))$. Using this fact, assumption {\bf(A2)} and \cite[Proposition 1.1 and Remark 1.1]{MR1485734}, we obtain that $u$ is a viscosity solution of 
\be\label{viscosity_solution_hjb_equation}\ba{rcl}
- \partial_t u  +  H(\nabla u, x)  &=&  f(x,\rho(t)) \hspace{0.3cm} \mbox{in } \RR^d  \times (0,T), \\[6pt]
u(\cdot,T)&=& g(\cdot,\rho(T)) \hspace{0.3cm} \mbox{in }   \RR^d .
\ea
\ee
Moreover, by \cite[Theorem 2.1]{MR2784834}, the Hamilton-Jacobi-Bellman equation above admits a comparison principle, which implies that $u$ is its unique viscosity solution. We will need the following result, whose proof follows from standard arguments. 
\begin{lemma}\label{regularity_properties_value_function} Under {\bf(A2)} we have:\smallskip\\
{\rm(i)} For every $(x,t) \in \RR^d\times [0,T]$ the set $\SS(x,t)$ of paths $\gamma_\ast  \in W^{1,2}([t,T]; \RR^d)$ such that $\gamma_\ast (t)= x$ and 
\be\label{existence_of_a_solution_for_u_x_t}
u(x,t)= \int_{t}^{T}  \ell(\dot{\gamma}_\ast(s), \gamma_\ast(s),  \rho(s))  \dd s + \Phi(\gamma_\ast(T),\rho(T)) 
\ee
is non-empty.  Moreover, there exists a constant $C>0$, independent of $(x,t)$, such that 
\be\label{uniform_bound_optimal_velocities_limit_case}
\sup_{s \in [t,T], \; \gamma_\ast \in  \SS(x,t)} |\dot{\gamma}_\ast(s)| \leq C. 
\ee
{\rm(ii)} The value function $u$ is globally Lipschitz.\\
{\rm(iii)} The value function $u$ is locally semi-concave w.r.t. to the space variable, uniformly in $t\in [0,T]$. More precisely, for any compact set $K \subseteq \RR^d$ there exists a constant $C_K$, independent of $t$, such that for every $\lambda \in [0,1]$, $x$, $y \in K$, such that the segment $[x,y]$ is contained in $K$, the following inequality holds
\be\label{semiconcavity_inequality}
\lambda u(x,t) + (1-\lambda)u(y,t) \leq u( \lambda x + (1-\lambda)y) + C_K \frac{\lambda(1-\lambda)}{2}|x-y|^2.
\ee
\end{lemma}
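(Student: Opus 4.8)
The plan is to treat the three assertions of Lemma~\ref{regularity_properties_value_function} as standard facts from the calculus of variations and optimal control, each following from Assumption {\bf(A2)} via the classical machinery for Bolza problems with uniformly convex, quadratically growing Lagrangians. Throughout I would fix a Lagrangian MFG equilibrium $m_\ast$, set $\rho(t):=e_t\sharp m_\ast$, and keep in mind that $\rho\in C([0,T];\P_1(\RR^d))$ with uniformly compactly supported marginals (by Remark~\ref{marginal_compactness} and the definition of $\A_{rel}$); in particular the maps $t\mapsto f(\cdot,\rho(t))$ and $g(\cdot,\rho(T))$ are bounded in $\C^2$ uniformly in $t$, by {\bf(A2)}{\rm(ii)}.

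For {\rm(i)}, existence of a minimizer $\gamma_\ast\in\SS(x,t)$ is the direct method: any minimizing sequence has, by the coercivity $\ell(\alpha,y,\rho(s))\ge \underline c_\ell|\alpha|^q-C_\ell$ (with $q=2$ here, via \eqref{bounds_for_L_below}), derivatives bounded in $L^2((t,T);\RR^d)$, hence a weakly convergent subsequence whose limit is admissible by the same Arzel\`a--Ascoli argument used in Lemma~\ref{bounds_l2}, and lower semicontinuity of the action (convexity of $L(\cdot,y)$ plus continuity, cf.\ \cite[Theorem 3.23]{MR990890}) passes to the limit. For the uniform velocity bound \eqref{uniform_bound_optimal_velocities_limit_case}, I would first note that comparing $\gamma_\ast$ with the constant path $s\mapsto x$ gives, exactly as in Lemma~\ref{bounds_l2}, a uniform $L^2$-bound on $\dot\gamma_\ast$; to upgrade this to an $L^\infty$-bound I invoke the Euler--Lagrange equation / Pontryagin maximum principle for the Bolza problem. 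The costate $p(s)$ satisfies $\dot p(s)=\partial_x L(-\dot\gamma_\ast(s),\gamma_\ast(s))+\nabla f(\gamma_\ast(s),\rho(s))$ with $p(T)=-\nabla g(\gamma_\ast(T),\rho(T))$ and the maximality relation $\dot\gamma_\ast(s)=-\partial_\xi H(p(s),\gamma_\ast(s))$; using the linear growth \eqref{assumptions_derivatives_L}$_2$ of $\partial_x L$, the $L^2$ bound on $\dot\gamma_\ast$ and Gr\"onwall gives a uniform bound on $\|p\|_\infty$, and then \eqref{linear_growth_partial_H} converts this into a uniform bound on $\|\dot\gamma_\ast\|_\infty$, independent of $(x,t)$ since all the data bounds are. (This is the step most likely to need care about whether one wants Lipschitz regularity of minimizers a priori — one can instead use Tonelli's partial regularity theorem plus the bound, or simply the fact that under {\bf(A2)}{\rm(i)} every minimizer is $C^1$.)

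For {\rm(ii)} and {\rm(iii)}, these are standard consequences of the dynamic programming representation \eqref{representation_soluion_HJB} together with the uniform velocity bound from {\rm(i)}. Global Lipschitz continuity in $x$ follows by taking a near-optimal path from $(y,t)$ and translating it by $x-y$ to get an admissible competitor from $(x,t)$: the change in the action is controlled by $|x-y|$ times the $\C^1$-bounds on $L(\cdot,\cdot)$ (using \eqref{assumptions_derivatives_L}$_2$ and the uniform velocity bound to estimate $|\partial_x L(-\dot\gamma,\gamma)|$ along the path) plus the $\C^1$-bound on $f,g$ from {\bf(A2)}{\rm(ii)}; Lipschitz continuity in $t$ comes from comparing $u(x,t)$ with the value obtained by waiting constantly on $[t,t']$ and then running optimally, using boundedness of $\ell$ and $H$ on the relevant compact set, and the H\"older modulus of $\rho$. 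Local semiconcavity \eqref{semiconcavity_inequality} is the classical estimate: given $x,y\in K$ with $[x,y]\subseteq K$ and $\lambda\in[0,1]$, pick an optimal $\gamma_\ast$ for the midpoint $z_\lambda:=\lambda x+(1-\lambda)y$ and use the two affine translates $\gamma_\ast+(x-z_\lambda)$ and $\gamma_\ast+(y-z_\lambda)$ as competitors for $u(x,t)$ and $u(y,t)$; forming the convex combination, the first-order terms cancel and the $C^2$-regularity of $L$ in $x$ (bounded Hessian on the compact tube traced out by the paths, which is where the uniform velocity bound is again essential) and of $f(\cdot,\rho(s))$, $g(\cdot,\rho(T))$ produces the quadratic remainder $C_K\tfrac{\lambda(1-\lambda)}{2}|x-y|^2$; uniformity in $t$ follows since all the Hessian bounds are $t$-independent.

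The main obstacle I anticipate is {\rm(i)}: getting the $L^\infty$ bound on optimal velocities \emph{uniform in $(x,t)$ and in the equilibrium $m_\ast$} rather than merely finite, since this bound is what makes the compact set in \eqref{semiconcavity_inequality} and the $\C^2$-bounds usable. The argument via the maximum principle is routine but hinges on having the precise growth conditions \eqref{assumptions_derivatives_L} and \eqref{linear_growth_partial_H} feed into a Gr\"onwall estimate whose constants depend only on $\underline L, \overline L, c_L, c_L', C_{f,g}, T$ and the diameter of $\supp m_0$ — all structural. Everything else is bookkeeping with near-optimal competitors, of the same flavor as the proof of Lemma~\ref{bounds_l2}.
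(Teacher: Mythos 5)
Your proposal is correct and follows essentially the same route as the paper's (sketched) proof: the direct method plus the constant-path comparison for the existence and the $L^2$ bound, the Euler--Lagrange/adjoint argument combining \eqref{assumptions_derivatives_L} and \eqref{linear_growth_partial_H} for the uniform velocity bound (this is precisely the argument of \cite[Theorem 6.2.5]{CannSinesbook} that the paper invokes), comparison of spatially translated, velocity-bounded competitors for the Lipschitz estimates, and the translated-midpoint argument for the semiconcavity inequality \eqref{semiconcavity_inequality}, which is exactly the content of \cite[Theorem 6.4.1]{CannSinesbook} cited by the paper. The only deviations are cosmetic and harmless: a sign bookkeeping slip in your costate system (the bounds used are symmetric under sign flips), the Gr\"onwall step is really a direct integration since the adjoint right-hand side does not involve $p$, and in (ii) you should compare exact minimizers (which exist and satisfy the uniform velocity bound by (i)), or note that the uniform $L^2$ bound already suffices there, rather than invoking the $L^\infty$ bound for merely near-optimal paths.
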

\begin{proof} The proof being standard, we only sketch the main ideas. The fact that $\SS(x,t)$ is non-empty follows directly from {\bf(A2)} and the direct method in the Calculus of Variations. Moreover, by {\bf(A2)} and arguing as in the proof of Lemma \ref{bounds_l2} we obtain the existence of $c>0$, independent of $(x,t,\rho)$, such that  
\be\label{representation_soluion_HJB_with_c}\ba{ll}
u(x,t) = &\inf\left\{ \int_{t}^{T}  \ell(\dot{\gamma}(s), \gamma(s),  \rho(s))  \dd s + \Phi(\gamma(T),\rho(T)) \; \big| \; \; \gamma \in  W^{1,2}([0,T]; \RR^d), \; \gamma(t)=x, \right.\\[6pt]
  \;  & \left. \; \hspace{1cm}\mbox{and } \; \int_{0}^{T}|\dot{\gamma}(s)|^2 \dd s \leq c\right\}.\ea
\ee
Using this fact, the Euler-Lagrange equation associated to any element $\gamma_\ast \in \SS(x,t)$, the second estimate in \eqref{assumptions_derivatives_L} and arguing as in the proof of \cite[Theorem 6.2.5]{CannSinesbook}, we easily obtain \eqref{uniform_bound_optimal_velocities_limit_case}, which proves assertion {\rm(i)}. In order to prove {\rm(ii)}, notice that {\rm(i)} implies that the value function can also be written as
\be\label{representation_soluion_HJB_with_control} 
u(x,t) =  \inf\left\{ \int_{t}^{T}  \ell\left(\alpha(s), x+\int_{t}^{s}\alpha(s')\dd s',  \rho(s)\right)  \dd s + \Phi\left( x+\int_{t}^{T}\alpha(s')\dd s',\rho(T)\right) \; \big| \; \; \alpha \in \hat{A}_C\right\},
\ee
where $\hat{A}_C:= \left\{ \alpha \in  L^{\infty}([0,T]; \RR^d) \; | \;  \|\alpha\|_{L^{\infty}} \leq C\right\}.$ Using  the estimate $|\inf_{\alpha \in \hat{A}_C}   A(\alpha) - \inf_{\alpha \in \hat{A}_C} B| \leq \sup_{\alpha \in \hat{A}_C} |A(\alpha)-B(\alpha)|$ for any functions $A$, $B: L^{\infty}([0,T]; \RR^d) \to  \RR$, expression \eqref{data_regular_case}, the uniform Lipschitz property for $f$ and $g$ in {\bf(A2)}{\rm(ii)}, and the second estimate in \eqref{assumptions_derivatives_L}, we easily obtain that $u(\cdot, t)$ is globally Lipschitz, with a Lipschitz constant which is independent of $t \in [0,T]$. Similarly,  using \eqref{representation_soluion_HJB_with_control}  and the estimate \eqref{uniform_quadratic_bound_from_above}, we get that $u(x,\cdot)$ is globally Lipschitz, with a Lipschitz constant which is independent of $x\in \RR^d$. Assertion {\rm(ii)} follows. Finally, assertion {\rm(iii)} follows directly from \cite[Theorem 6.4.1]{CannSinesbook}.
\end{proof}

Now, let us consider the set-valued map
$$
\RR^d \ni x \mapsto \SS(x):= \mbox{argmin}\left\{   \int_{0}^{T}  \ell(\dot{\gamma}(t), \gamma(t),  \rho(t))  \dd t + \Phi(\gamma(T),\rho(T)) \; \big| \; \; \gamma \in  W^{1,2}([0,T]; \RR^d),   \; \;   \gamma(0)=x\right\}.
$$
Since Lemma \ref{regularity_properties_value_function}{\rm(ii)} implies that $u(\cdot, 0)$ is a.e. differentiable, \cite[Corollary 6.4.10]{CannSinesbook} yields that for a.e. $x\in \RR^d$ we have $\SS(x)=\{\tilde{\gamma}^x\}$ for some $\tilde{\gamma}^x \in \A(x)$.
Now, as in the proof of Proposition \ref{equivalent_reformulation_mfg_eq},  let $\gamma_{\ast} \in \A$ be a Borel measurable selection of $\SS$. Then,  for a.e. $x\in \RR^d$ we have that $\gamma_\ast^x=\tilde{\gamma}^x$. Thus, Proposition \ref{equivalent_reformulation_mfg_eq}{\rm(ii)}  yields  $m_{\ast}^x= \delta_{ \gamma_\ast^x }$ for a.e. $x \in \mbox{supp}(m_0)$ and, hence,   $m_\ast= \gamma_\ast \sharp m_0$. In particular,  $ \rho(t)= \gamma_{\ast}^{(\cdot)}(t) \sharp m_0$ for all $t\in [0,T]$. 

On the other hand, by \cite[Theorem 6.4.9, Theorem 6.3.3 and Theorem 6.4.8]{CannSinesbook}, for a.e. $x\in \RR^d$, we have   
\be\label{solution_characteristic_equations}
\dot{\gamma}_\ast^x(t)=- \partial_{\xi} H\left( \nabla u( \gamma_\ast^x(t),t),\gamma_\ast^x(t)\right) \hspace{0.3cm} \forall \; t \in (0,T), \; \; \gamma_\ast^x(0)=x,
\ee
where we underline that $u$ is differentiable w.r.t. to its first argument at the point $( \gamma_\ast^x(t),t)$  if $t\in (0,T)$ (see \cite[Theorem 6.4.7]{CannSinesbook}).   Denoting by still by $\nabla u$ a measurable selection of $ (x,t) \mapsto D_{x}^{+}u(x,t)$, for every $\phi \in C^{\infty}(\RR^d)$ with compact support and $t\in [0,T]$, we have    
$$\ba{rcl}
\int_{\RR^d} \phi(x)  \dd \rho(t)(x)&=&  \int_{\RR^d} \phi(\gamma_\ast^x(t))\dd m_0(x) \\[6pt]
\; &=&   \int_{\RR^d}\phi(x) \dd m_0(x) - \int_{\RR^d}\int_{0}^{t} \partial_{\xi} H\left( \nabla u( \gamma_\ast^x(s),s),\gamma_\ast^x(s)\right) \nabla \phi(\gamma_\ast^x(s))\dd s \dd m_0(x),\\[6pt]
\; &=&   \int_{\RR^d}\phi(x) \dd m_0(x) - \int_{0}^{t} \int_{\RR^d} \partial_{\xi} H\left( \nabla u(x,s),x\right) \nabla \phi(x)\dd\rho(s)(x) \dd s ,
\ea
$$ 
which implies that $\rho$ satisfies \eqref{definition_distributional_solution} and, hence, the couple $(u,\rho)$ solves $(MFG)$.
Notice that  under  {\bf(A2)} a Lagrangian MFG equilibrium $m_\ast$  exists (see Remark \ref{existence_proof_remark}) and, hence, the previous arguments show, in particular,  the existence of at least  one solution $(u,\rho)$ to $(MFG)$.

Conversely, if $(u,\rho)$ solves $(MFG)$, then the first equation therein implies that $u$ and $\rho$ are still related by \eqref{representation_soluion_HJB}.  
 By the second equation in $(MFG)$ and \cite[Theorem 8.2.1]{Ambrosiogiglisav}, there exists a probability measure $m_\ast \in \P(\Gamma)$ such that $\rho(t)= e_t \sharp m_\ast$ for all $t\in [0,T]$ and, considering the disintegration $\dd m_{\ast}(\gamma)= \dd m_{\ast}^x(\gamma) \otimes \dd m_0(x)$,   for a.e. $x\in \mbox{supp}(m_0)$ the support of the measure $m_{\ast}^x$ is contained in the set of solutions of \eqref{solution_characteristic_equations}.  By Lemma \ref{regularity_properties_value_function} and arguing as in the proof of \cite[Lemma 4.11]{Cardaliaguet10}, we have that every solution to \eqref{solution_characteristic_equations}   solves the optimization problem in the r.h.s. of \eqref{representation_soluion_HJB} with $t=0$. Thus, by Proposition \ref{equivalent_reformulation_mfg_eq}{\rm(ii)} we obtain that $m_\ast$ is a Lagrangian MFG equilibrium. Notice also that  $\SS(x)$ being a singleton for a.e. $x\in \RR^d$, the previous argument shows, in particular,  that $[0,T] \ni t \mapsto \gamma_\ast^{(\cdot)}(t) \sharp m_0 \in \P_1(\RR^d)$ is the unique solution in $C([0,T]; \P_1(\RR^d))$ of the continuity equation \\
 \be\label{limit_continuity_equation}
\partial_t \rho  - \mbox{div}\left( \partial_{\xi} H( \nabla u, x) \rho \right)  = 0 \hspace{0.3cm} \mbox{in } \RR^d  \times (0,T), \hspace{0.3cm} \rho (0)=m_0 \hspace{0.3cm} \mbox{in }   \RR^d.
\ee

In addition to the relation between Lagrangian MFG equilibria and the solutions of $(MFG)$, assumption {\bf(A2)} has also consequences on the regularity of the time  marginals $\{\rho(t)\; | \; t \in [0,T]\}$ as the following result shows. 
\begin{proposition}\label{regularity_result_for_the_marginals} In addition to {\bf(A2)}{\rm(iii)}, assume that $m_0 \in L^p(\RR^d)$ for some $p \in (1, +\infty]$ and let $(u,\rho)$ be a solution to $(MFG)$. Then, the following assertions hold true:\smallskip\\
{\rm(i)} There exists $c_1>0$, independent of $t\in [0,T]$,  such that $\supp(\rho(t)) \subseteq B(0,c_1)$ for all $t\in [0,T]$.\smallskip\\
{\rm(ii)} For all $t \in [0,T]$ the measure $\rho(t)$ is absolutely continuous w.r.t. the Lebesgue measure. Moreover, the density of $\rho(t)$, that we will still denote by $\rho(t)$, belongs to $L^{p}(\RR^d)$ and there exists a constant $c_2>0$, independent of $p\in (1,+\infty]$ and $t\in [0,T]$, such that 
\be\label{lp_estimates_continuity_equation}
\|\rho(t)\|_{L^{p}}\leq c_2 \|m_0\|_{L^{p}}.
\ee
\end{proposition}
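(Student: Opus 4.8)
The plan is to exploit the representation of $\rho$ in terms of the characteristic flow established just above the statement: writing $X_t(x):=\gamma_\ast^x(t)$, where $\gamma_\ast^x$ is the ($m_0$-a.e.\ unique) optimal trajectory solving \eqref{solution_characteristic_equations}, one has $\rho(t)=(X_t)\sharp m_0$ for every $t\in[0,T]$. Assertion {\rm(i)} is then immediate: since $u$ is globally Lipschitz (Lemma~\ref{regularity_properties_value_function}{\rm(ii)}), $\nabla u$ is bounded on $\RR^d\times[0,T]$, so by \eqref{linear_growth_partial_H} the velocities satisfy $|\dot\gamma_\ast^x(s)|=|\partial_\xi H(\nabla u(\gamma_\ast^x(s),s),\gamma_\ast^x(s))|\le C$ with $C$ independent of $x$ and $s$ (this is \eqref{uniform_bound_optimal_velocities_limit_case}). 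As $\supp(m_0)$ is compact, say $\supp(m_0)\subseteq B(0,R_0)$, every $\gamma_\ast^x$ with $x\in\supp(m_0)$ stays in $B(0,R_0+CT)$, hence $\supp(\rho(t))=\supp\bigl((X_t)\sharp m_0\bigr)\subseteq\overline{B(0,R_0+CT)}$, which gives {\rm(i)} with, say, $c_1:=R_0+CT+1$.

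For {\rm(ii)} the idea is a lower bound on the Jacobian of $X_t$ that is uniform in $t$, followed by a change of variables. I would first run the estimate pretending $u$ is $C^{1,1}$, so that $b(x,s):=-\partial_\xi H(\nabla u(x,s),x)$ is Lipschitz and $X_t$ is a bi-Lipschitz flow with $\det DX_t(x)=\exp\bigl(\int_0^t(\diver_x b)(X_s(x),s)\,\dd s\bigr)$. Writing $\diver_x b=-\tr\bigl(\partial^2_{\xi\xi}H(\nabla u,\cdot)\,D^2u\bigr)-\tr\bigl(\partial^2_{x\xi}H(\nabla u,\cdot)\bigr)$ and using $\partial^2_{\xi\xi}H\succeq0$ (convexity of $H(\cdot,x)$), the one-sided bound $D^2u(\cdot,s)\preceq C_KI$ on $B(0,c_1)$ that is \emph{uniform in $s$} (the semiconcavity of Lemma~\ref{regularity_properties_value_function}{\rm(iii)}), and the $C^2$-regularity of $H$ together with {\rm(i)} and the boundedness of $\nabla u$, one gets $\diver_x b\ge-M$ on $B(0,c_1)$ with $M$ independent of $s$; hence $\det DX_t(x)\ge e^{-MT}=:c\in(0,1]$ for a.e.\ $x$, uniformly in $t$. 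Moreover $X_t$ is injective, for two distinct minimizers of the same functional cannot meet at a time $s>0$: otherwise interchanging their restrictions to $[s,T]$ would produce two further minimizers, one of which has a corner at $s$, contradicting the $C^1$-regularity of minimizers. The area formula then shows $\rho(t)$ is absolutely continuous with density $\rho(t)(y)=m_0(X_t^{-1}(y))\big/\det DX_t(X_t^{-1}(y))$ on $X_t(\RR^d)$, and a change of variables gives
\[
\|\rho(t)\|_{L^p}^{p}=\int_{\RR^d}m_0(x)^{p}\,\det DX_t(x)^{\,1-p}\,\dd x\le c^{\,1-p}\|m_0\|_{L^p}^{p},
\]
whence $\|\rho(t)\|_{L^p}\le c^{\,1/p-1}\|m_0\|_{L^p}\le c^{-1}\|m_0\|_{L^p}$, i.e.\ \eqref{lp_estimates_continuity_equation} with $c_2:=c^{-1}=e^{MT}$, independent of $p\in(1,\infty)$ and of $t$; the case $p=\infty$ follows by letting $p\to\infty$, the supports being uniformly bounded.

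Since $u$ is only locally semiconcave, $b$ is merely $BV$ and the computation above must be justified by approximation. I would replace $u$ by its sup-convolutions $u^{\e}$: these are $C^{1,1}$, semiconcave with constant $C_K+o(1)$, converge to $u$ locally uniformly, and satisfy $\nabla u^{\e}\to\nabla u$ a.e. Running the previous paragraph with the Lipschitz field $b^{\e}:=-\partial_\xi H(\nabla u^{\e},\cdot)$ and its flow $X^{\e}_t$ yields $\|(X^{\e}_t)\sharp m_0\|_{L^p}\le e^{MT}\|m_0\|_{L^p}$ with $M$ \emph{uniform in $\e$} — crucially because only the upper bound $D^2u^{\e}\preceq C_KI$ (which holds uniformly in $\e$) enters the lower estimate of $\diver_x b^{\e}$, the concomitant lower bound $D^2u^\e\succeq-\e^{-1}I$ being irrelevant here. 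One then passes to the limit $\e\to0$ using that $X^{\e}_t(x)\to X_t(x)$ for $m_0$-a.e.\ $x$ (stability of the characteristics towards the a.e.-unique optimal trajectory) and the lower semicontinuity of the $L^p$-norm under narrow convergence of measures. Alternatively, one may quote from \cite{CannSinesbook} the Lipschitz dependence of optimal trajectories on the initial point and perform the area-formula argument directly on $X_t$, the Jacobian lower bound again stemming from $D^2u\preceq C_KI$.

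I expect the real difficulty to be exactly this approximation step: the Liouville and area formulas, and above all the stability $X^{\e}_t\to X_t$, need care because $\diver_x b$ carries the measure $D^2u$; the remaining ingredients — the uniform velocity bound, the no-crossing/injectivity argument, and the change of variables at the regularized level — are routine.
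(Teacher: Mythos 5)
Your part (i) and the engine of part (ii) coincide with the paper's proof: regularize $u$ in space (the paper mollifies rather than taking sup-convolutions, which is immaterial), observe that only the one-sided bound $\partial^2_{xx}u_\eps\preceq C_K I$ coming from the uniform local semiconcavity enters, together with $\partial^2_{\xi\xi}H\succeq 0$, the boundedness of $\nabla u_\eps$ and the $C^2$ regularity of $H$, so that $\diver\bigl(-\partial_\xi H(\nabla u_\eps,\cdot)\bigr)$ is bounded below uniformly in $\eps$ and $t$ on the relevant compact set; the Liouville/Jacobian estimate (the paper quotes \cite[Section 2]{MR2408257}) and a change of variables then give $\|\rho_\eps(t)\|_{L^p}\le c_2\|m_0\|_{L^p}$ with $c_2$ independent of $\eps$, $t$, $p$. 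The gap is exactly the step you flag but do not supply: the identification of the limit. You want $(X^\eps_t)\sharp m_0\to\rho(t)$ via $m_0$-a.e.\ pointwise convergence $X^\eps_t(x)\to X_t(x)$ of the regularized characteristics to the optimal trajectories, but this is not a routine stability fact. The field $-\partial_\xi H(\nabla u,\cdot)$ is discontinuous, $\nabla u^\eps\to\nabla u$ only at differentiability points of $u(\cdot,t)$, and a uniform limit of solutions of $\dot\gamma=-\partial_\xi H(\nabla u^\eps(\gamma,t),\gamma)$ is a priori only a solution of the convexified inclusion $\dot\gamma(t)\in\overline{\mathrm{co}}\,\{-\partial_\xi H(p,\gamma(t))\,:\,p\in D_x^+u(\gamma(t),t)\}$, not of \eqref{solution_characteristic_equations}; since $\partial_\xi H$ is nonlinear in $\xi$, such generalized characteristics need not be optimal trajectories, and proving that for $m_0$-a.e.\ $x$ they coincide with $\gamma_\ast^x$ is a genuine additional argument absent from your sketch. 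Your ``alternative'' route has the same problem in disguise: forward Lipschitz dependence of $X_t$ on the initial point is not available under {\bf(A2)} (only backward-type estimates hold), so the area formula cannot be applied directly to $X_t$.

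The paper closes this step without any pointwise comparison of flows: it passes to the limit in the weak formulation \eqref{definition_distributional_solution} of the regularized continuity equation, using $\nabla u_{\eps_n}\to\nabla u$ a.e.\ and dominated convergence to get $\partial_\xi H(\nabla u_{\eps_n},\cdot)\to\partial_\xi H(\nabla u,\cdot)$ in $L^1([0,T];L^s(\RR^d))$, while the uniform $L^p$ bound on $\rho_{\eps_n}$ (weak* convergent in $L^\infty([0,T];L^p)$) is precisely what allows the product to converge; the limit $\tilde\rho\,\L^d$ is then identified with $\rho$ through the uniqueness of solutions of \eqref{limit_continuity_equation} in $C([0,T];\P_1(\RR^d))$, which was established in Subsection \ref{equivalence_both_mfg_formulations} from \cite[Theorem 8.2.1]{Ambrosiogiglisav} and the a.e.\ uniqueness of minimizers, combined with \cite[Lemma 8.1.2]{Ambrosiogiglisav}. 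So either supply the missing ``limit characteristics are optimal'' argument (in the spirit of \cite[Chapter 6]{CannSinesbook} or \cite{Cardaliaguet10}), or replace your Lagrangian identification by this distributional one; as written, the proposal is incomplete at that point.
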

\begin{proof} Assertion {\rm(i)} follows directly from the formula $\rho(t)= \gamma_\ast^{(\cdot)}(t) \sharp m_0$, where $\gamma_\ast^x \in \SS(x)$ for all $x\in \RR^d$,  Lemma \ref{regularity_properties_value_function}{\rm(i)} and the fact that $\supp(m_0)$ is compact. In order to prove {\rm(ii)}, let $\beta \in C^{\infty}(\RR^d)$, non-negative, with support contained in the unit ball and such that $\int_{\RR^d} \beta(x)\dd x=1$.  For $\eps>0$, let us define $\beta_{\eps}(x):= \eps^{-d}\beta(x/\eps)$, $u_\eps(x,t):= \left(\beta_\eps \ast u(\cdot, t)\right)(x)$ and consider the equation
\be\label{approximated_continuity_equation}
\partial_t \rho_\eps - \mbox{div}\left( \partial_{\xi} H( \nabla u_\eps, x) \rho_\eps \right)  = 0 \hspace{0.3cm} \mbox{in } \RR^d  \times (0,T), \hspace{0.3cm} \rho_\eps(0)=m_0 \hspace{0.3cm} \mbox{in }   \RR^d.
\ee
For every $x\in \RR^d$, let us define $\gamma_\eps^x \in C^{1}([0,T];\RR^d)$ as the unique solution to 
\be\label{solution_characteristic_equations_regular_flow}
\dot{\gamma}_\eps^x(t)=- \partial_{\xi} H\left( \nabla u_\eps( \gamma_\eps^x(t),t),\gamma_\eps^x(t)\right) \hspace{0.3cm} \forall \; t \in (0,T), \; \; \gamma_\eps^x(0)=x.
\ee 
By \cite[Proposition 8.1.8]{Ambrosiogiglisav}, equation \eqref{approximated_continuity_equation}  admits a unique solution in $C([0,T];\P_1(\RR^d))$, which is given by $\rho_\eps(t):= \gamma_\eps^{(\cdot)}(t)\sharp m_0$ for all $t\in [0,T]$. Moreover, by a standard change of variable argument (see e.g. \cite[Section 2]{MR2408257}), for every $t\in [0,T]$ we have that $\rho_\eps(t)$ is absolutely continuous, with density given by
$$
\rho_\eps(x,t)= \frac{m_0\left([\gamma_\eps^{(\cdot)}(t)]^{-1}(x)\right)}{\left| \mbox{det}\left( Y\left([\gamma_\eps^{(\cdot)}(t)]^{-1}(x),t\right) \right)\right|} \hspace{0.4cm} \mbox{for a.e. $x\in \RR^d$,}
$$
where, for each $y \in \RR^d$, $Y(y,\cdot)$ is defined as the unique solution to 
$$
\dot{Y}(t)= L_{\eps}(Y(t), t) \hspace{0.3cm} t \in (0,T), \; \; Y(0)=y,
$$
with $\RR^d \times [0,T] \ni (x,t) \mapsto L_\eps(x,t)\in \RR^{d\times d}$ being given by 
\be\label{definition_L_eps}
L_\eps(x,t):=D_x\left[ \partial_{\xi} H\left( \nabla u_\eps( x,t),x\right)\right]= \partial_{\xi, \xi}^2 H\left( \nabla u_\eps( x,t),x\right)\partial_{x,x}^2u_\eps(x,t)+ \partial_{\xi,x}^2 H\left( \nabla u_\eps( x,t),x\right).
\ee
Let us assume that $p \in (1,+\infty)$. By a change of variable again,  we obtain that 
\be\label{lp_estimate_in_terms_of_Y}
\|\rho_\eps(t)\|_{L^{p}}^p= \int_{\RR^d}m_0^p(x) \left| \mbox{det}\left( Y\left(x,t\right) \right)\right|^{1-p} \dd x= \int_{\supp(m_0)}m_0^p(x) \left| \mbox{det}\left( Y\left(x,t\right) \right)\right|^{1-p} \dd x. 
\ee
Now, for all $x\in \RR^d$ and $t \in [0,T]$, we have (see \cite[Section 2, estimate $(2.4)$]{MR2408257}) 
\be\label{estimate_for_the_Jacobien_power_p}\ba{rcl}
\left| \mbox{det}\left( Y\left(x,t\right) \right)\right|^{1-p} &\leq& \exp\left( (p-1)\int_{0}^{t}\left\| \left[\mbox{div}\left( - \partial_{\xi} H( \nabla u_\eps(\cdot,s), \cdot) \right)\right]_{-} \right\|_{L^{\infty}}\dd s\right)\\[6pt]
\; & \leq & \exp\left(p\int_{0}^{T}\left\| \left[\mbox{Tr}\left( L_\eps(x,t) \right)\right]_{+} \right\|_{L^{\infty}}\dd t\right),
\ea\ee
where $[a]_{-}:=\max\{0, -a\}$, $[a]_{+}=a+[a]_-$ and  for any $a\in \RR$, and $\mbox{Tr}\left( L_\eps(x,t) \right)$ denotes the trace of the matrix $L_\eps(x,t)$. On the other hand, Lemma \ref{regularity_properties_value_function}{\rm(ii)}, and the definition of $u_\eps$, imply that $\nabla u_\eps(x,t)$ is bounded, uniformly in $(x,t) \in \RR^d \times [0,T]$ and $\eps>0$. 
Moreover, by Lemma \ref{regularity_properties_value_function}{\rm(iii)}, the compactness of $\supp(m_0)$  and the definition of $u_\eps(\cdot,t)$ again, we can assume that  $u_\eps(\cdot,t)$ is  uniformly semiconcave in a bounded open set $\OO$ containing $\supp(m_0)$, i.e.  $u_\eps(\cdot,t)$ satisfies \eqref{semiconcavity_inequality} for all $x$, $y\in \OO$, with $C_K$ replaced by $\tilde{c}$, for some $\tilde{c}$ independent of $t$ and $\eps$ small enough.   By \cite[Proposition 1.1.3]{CannSinesbook}, we have that $\partial^2_{xx} u_\eps(x,t) - \tilde{c} I_d$ negative semidefinite for all $(x,t) \in \OO \times [0,T]$ and, hence, using that $\partial_{\xi,\xi} H(\xi,x)$ is positive semidefinite for all $\xi \in \RR^d$ and $x\in \RR^d$, there exists a constant $\hat{c}>0$, independent of $\eps$ and $t$, such that 
$$
L_\eps(x,t):=D_x\left[ \partial_{\xi} H\left( \nabla u_\eps( x,t),x\right)\right]= \partial_{\xi, \xi}^2 H\left( \nabla u_\eps( x,t),x\right)\partial_{x,x}^2u_\eps(x,t)+ \partial_{\xi,x}^2 H\left( \nabla u_\eps( x,t),x\right)- \hat{c} I_{d}
$$
is negative semidefinite for all $x\in \OO$. As a consequence, $\mbox{Tr}(L_\eps(x,t))$ is bounded from above by a constant which is independent of  $\eps>0$  small enough,  $x\in \OO$, and $t \in [0,T]$. Thus, by  \eqref{estimate_for_the_Jacobien_power_p} and taking the power $1/p$ in  \eqref{lp_estimate_in_terms_of_Y}, there exists $c_2>0$, independent of $\eps$, $t$ and $p$, such that 
\be\label{estimate_rho_eps}
\|\rho_\eps(t)\|_{L^{p}}\leq c_2 \|m_0\|_{L^{p}} \hspace{0.3cm} \forall \; t \in [0,T]. 
\ee
The previous estimate shows the existence of  $\tilde{\rho} \in L^{\infty}([0,T]; L^{p}(\RR^d))$ and a sequence $(\rho_{\eps_n})_{n\in \NN}$ such that, as $n\to \infty$, $\eps_n \to 0$ and  $\rho_{\eps_n} \to \tilde{\rho} \in L^{\infty}([0,T]; L^{p}(\RR^d))$ in the weak* topology.  By dominated convergence, we have that $\partial_{\xi} H( \nabla u_{\eps_n}, \cdot) \to \partial_{\xi} H( \nabla u, \cdot)$ in $L^{1}([0,T]; L^{s}(\RR^d))$ for any $s\in [1,+\infty)$. As a consequence,   $\tilde{\rho}$ satisfies  estimate \eqref{estimate_rho_eps} and, passing to the limit in \eqref{approximated_continuity_equation}, we get that the measure $[0,T] \ni t \to \tilde{\rho}(t) \L^{d}\in L^{p}(\RR^d)$ satisfies \eqref{definition_distributional_solution}. Using that $[0,T] \ni t \mapsto \rho(t) \in \P_1(\RR^d)$ is the unique solution to \eqref{limit_continuity_equation} in $C([0,T]; \P_1(\RR^d))$, \cite[Lemma 8.1.2]{Ambrosiogiglisav} implies that $\tilde{\rho}(t)  \L^{d}=\rho(t)$ for a.e. $t\in [0,T]$. Thus, for a.e. $t\in [0,T]$, $\rho(t)$ is absolutely continuous w.r.t. to the Lebesgue measure and estimate \eqref{lp_estimates_continuity_equation} holds for its density. Using this fact and that $\rho \in C([0,T];\P_1(\RR^d))$, the previous statement is valid in the whole time interval $[0,T]$, which proves {\rm(ii)} when $p<+\infty$. Since $c_2$ does not depend on $p$, assertion {\rm(ii)} for  $p=\infty$ follows by taking the limit in  \eqref{lp_estimates_continuity_equation} when $p\to \infty$.  
\end{proof}

\begin{remark} Similar regularization techniques have been recently employed in \cite{Dweilk_Mazanti_2019}, in order to establish $L^p$-estimates for the time evolving distributions describing  equilibria in optimal-exit MFGs. 
\end{remark}
\subsection{The $N$-player equilibria: associated time marginals and value functions} \label{PDE_N_players}

Let us consider the game with $N$ players  defined in Section \ref{n_player_game} with $\ell$ and $\Phi$ given by \eqref{data_regular_case}. Let $(m_N, \hdots,m_N) \in \A_{rel}^N$ be a symmetric  equilibrium in randomized  distributed open-loop strategies for the $N$-player game. Note that if for $h=f$, $g$ we define
\be\label{definition_h_N}
h_N(x,\mu) := \int_{(\RR^d)^{N-1}} h\left(x,  \frac{1}{N-1}\sum_{j=2}^{N} \delta_{x_{j}}\right) \otimes_{j=2}^{N}  \dd \mu (x_{j}) \hspace{0.5cm} \forall \; x\in \RR^d, \;  \mu \in \P_1(\RR^d),
\ee
we have that $f_N$ and $g_N$ satisfy the assumptions for $f$ and $g$ in {\bf(A2)}{\rm(ii)} (with the same constant $C_{f,g}$).  As a consequence of this fact,  the results in \cite[Chapter 6]{CannSinesbook} and Corollary \ref{existence_unique_solution_for_a_e_initial_condition_x} we obtain the existence of  $\gamma_N\in \A$ such that $m_N= \gamma_{N} \sharp m_0$, i.e.  $(m_N, \hdots,m_N)$ can be identified with the non-randomized symmetric equilibrium in distributed open-loop strategies given by $(\gamma_N, \hdots, \gamma_N) \in \A^N$. Furthermore, setting $\rho_N(t):= e_t \sharp m_N=\gamma_{N}^{(\cdot)}(t) \sharp m_N$ for all $t\in [0,T]$, we have that  $\gamma_{N}^x \in \SS^{N}(x):= \SS^N(x,0)$, where 
$$\ba{l}
 \SS^{N}(x,t):= \mbox{argmin}\left\{ \int_{t}^{T} \left[ L(-\dot{\gamma}(s),\gamma(s))+ f_{N}(\gamma(s),\rho_N(s)) \right]\dd s + g_{N}(\gamma(T),\rho_N(T)) \;   \big|  \right. \\[8pt]
\hspace{3.8cm} \left.  \gamma \in  W^{1,2}([t,T]; \RR^d) \; \; \mbox{and } \;  \gamma(t)=x\right\} \hspace{0.3cm} \forall \; x\in \RR^d, \; t\in[0,T].
\ea
$$
\begin{remark}\label{gamma_N_uniquely_defined} {\rm(i)}  Recall that the representation $m_N=\gamma_N\sharp m_0$ is only $m_0$-uniquely determined. In particular, if $\gamma_N' \in \A$ is different from $\gamma_N$ but coincides with it on a set $A$ such that $m_0(A)=1$,  then we also have that $m_N=  \gamma_N' \sharp m_0$.  For the sake of simplicity, we have chosen to represent always $m_N$ via a measurable selection $\gamma_N$ of the set-valued map $\SS^N$. Notice that {\bf(A2)} and the results in  \cite[Chapter 6]{CannSinesbook} imply that $\gamma_N^x$ is uniquely defined for a.e. $x\in \RR^d$. \smallskip\\
{\rm(ii)} Exactly as in the limit case {\rm(}see Lemma \ref{regularity_properties_value_function}{\rm(i))}, we have the existence of a constant $C>0$, independent of $(x,t)$ and $N \in \NN$, such that 
\be\label{uniform_bound_optimal_velocities_N_players}
\sup_{s \in [t,T], \; \gamma_\ast \in  \SS^N(x,t)} |\dot{\gamma}_\ast(s)| \leq C \hspace{0.3cm} \forall \; x\in \RR^d, \; t\in[0,T], \; \; N\in \NN.
\ee
As a consequence, there exists a compact set $K_C \subseteq \RR^d$ such that $\gamma_{N}^x(t) \in K_{C}$ for all $N \in \NN$, $x \in \supp(m_0)$ and $t \in [0,T]$. In particular, the representation $\rho_N(t)= e_t \sharp m_N=\gamma_{N}^{(\cdot)}(t)$ implies that $\supp(\rho_{N}(t)) \subseteq K_C$ for all $N \in \NN$ and $t\in [0,T]$. 
\end{remark} \smallskip

Let us define $u_{N}: \RR^d \times [0,T] \to \RR$ by
\be\label{representation_u_N}\ba{ll}
u_{N}(x,t):=&\inf\left\{ \int_{t}^{T} \left[ L(-\dot{\gamma}(s),\gamma(s))+ f_{N}(\gamma(s),\rho_{N}(s)) \right]\dd s + g_{N}(\gamma(T),\rho_{N}(T)) \;   \big|  \right. \\[8pt]
\; &  \; \; \; \; \; \;\; \; \; \; \; \; \left.    \gamma \in  W^{1,2}([t,T]; \RR^d) \; \; \mbox{and } \;  \gamma(t)=x\right\} \hspace{0.4cm} \forall \; x \in \RR^d, \; t\in [0,T].
\ea
\ee
\begin{remark}\label{on_the_uniform_regularity_properties_for_u_N}
Mimicking the proofs of Lemma \ref{regularity_properties_value_function} and  \cite[Theorem 6.4.1]{CannSinesbook} we obtain that $u_N$ is globally Lipschitz and locally semi-concave. Moreover, the  Lipschitz and local semi-concavity constants   are independent of $N$. 
\end{remark}
Arguing as in the previous subsection,  the pair $(u_N,\rho_N)$ solves
$$\left.\ba{l}
- \partial_t u_N   +  H(\nabla u_N , x)  =  f_{N}(x,\rho_N (t)) \hspace{0.3cm} \mbox{in } \RR^d  \times (0,T), \\[6pt]
\partial_t \rho_N - \mbox{div}\left( \partial_{\xi} H( \nabla u_N , x) \rho_N\right)  = 0 \hspace{0.3cm} \mbox{in } \RR^d  \times (0,T),\\[6pt]
u_N(\cdot,T)= g_{N}(\cdot,\rho_N(T)), \; \; \rho_N(0)=m_0 \hspace{0.3cm} \mbox{in }   \RR^d.
\ea \right\} \eqno(MFG_N)
$$ 
Conversely, associated to any solution $(u_N, \rho_N)$ we have the existence of a symmetric equilibrium in distributed open-loop strategies  $(\gamma_{N},\hdots, \gamma_N)\in \A^N$ for the $N$-player game. Moreover, using again the results in \cite[Chapter 6]{CannSinesbook}, any $\gamma_N\in \A$ defining such equilibrium satisfies 
\be\label{solution_characteristic_equations_N_players}
\dot{\gamma}_N^x(t)=- \partial_{\xi} H\left( \nabla u_N( \gamma_N^x(t),t),\gamma_N^x(t)\right) \hspace{0.3cm} \forall \; t \in (0,T), \; \; \gamma_N^x(0)=x,
\ee
for a.e. $x \in \mbox{supp}(m_0)$.  Thus, we can think of the r.h.s. above as an optimal control which is feedback with respect to the individual states. We call $(\gamma_N, \hdots, \gamma_N)$ a Nash equilibrium in {\it distributed Markov strategies} for the $N$-player game. As a consequence of the previous discussion, such equilibria exist for all $N\in \NN$ provided that {\bf(A2)} holds true.


Let us consider a sequence  $(\gamma_{N})_{N\in \NN}$ of elements in $\A$ defining Nash equilibria in distributed Markov strategies for the $N$-player games.
 Theorem \ref{main_result} yields the existence of a Lagrangian equilibrium $m_\ast \in \A_{rel}$ and a subsequence $(\gamma_{N_k})_{k\in \NN}$ such that $\gamma_{N_k}\sharp m_0 \to m_\ast$ as $k \to \infty$.  Let $\gamma_\ast \in \A$ be such that $m_\ast= \gamma_\ast \sharp m_0$ and $\gamma_\ast^x\in \SS(x)$ for all $x\in \RR^d$, i.e. an equilibrium  in distributed Markov strategies for the MFG.

Our aim now is to study the convergence of the associated time marginals $\rho_{N_k}$  to $\rho$, the convergence of the associated value functions $u_{N_k}$ to $u$ and, finally, the   convergence  of $\gamma_{N_k}^x$ to $\gamma_{\ast}^x$ for a.e. $x\in \RR^d$. 

We will need the following preliminary result.
\begin{lemma}\label{lemma_on_convergence_of_h_k}
Assume {\bf(A2)}{\rm(ii)} and let $K \subseteq \RR^d$ be a nonempty compact set. Consider a sequence of measures $(\mu_k)_{k \in \NN} \subseteq \P_1(\RR^d)$  such that $\mbox{{\rm supp}}(\mu_{k}) \subseteq K$, for all $k \in \NN$,  and, as $k\to \infty$,  $\mu_k \to \mu$ for some $\mu \in \P_1(\RR^d)$.  Then, for any sequence $(x_k)_{k \in \NN}$ and $x \in \RR^d$ such that $x_k \to x$, we have
\be\label{convergence_h_k}
h(x,\mu)=\lim_{k \to \infty}    h_{k}(x_k,\mu_{k}),
\ee
where $h=f$, $g$ and $h_k$ is defined by \eqref{definition_h_N}.
\end{lemma}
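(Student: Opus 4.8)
The plan is to read $h_k(x_k,\mu_k)$ as the expectation of $h(x_k,\cdot)$ evaluated at the empirical measure of an i.i.d.\ sample, and then to pass to the limit essentially as in the proof of Theorem~\ref{main_result}. First I would note that, since $\mathrm{supp}(\mu_k)\subseteq K$ for all $k$ and $\mu_k\to\mu$ narrowly, testing the closed set $K$ gives $1=\limsup_k\mu_k(K)\le\mu(K)$, so $\mathrm{supp}(\mu)\subseteq K$ as well; hence all the measures involved lie in $\P(K)$, on which the narrow topology coincides with the $d_1$-topology inherited from $\P_1(\RR^d)$ (as in Lemma~\ref{some_basic_properties}{\rm(i)}, because $K$ is compact) and on which $h=f,g$ is, by {\bf(A2)}{\rm(ii)}, continuous and bounded by $C_{f,g}$. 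Then for each $k\ge2$ I let $Z_2^k,\dots,Z_k^k$ be i.i.d.\ $K$-valued random variables on a probability space $(\Omega_k,\mathcal F_k,\mathbf P_k)$ with common law $\mu_k$, and set $\nu^k:=\tfrac1{k-1}\sum_{j=2}^k\delta_{Z_j^k}\in\P(K)$; by \eqref{definition_h_N} and Fubini, $h_k(x_k,\mu_k)=\mathbf E_k[h(x_k,\nu^k)]$.

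The key step is to show that $\nu^k$ converges in distribution to the deterministic limit $\mu$, i.e.\ $\mathrm{Law}(\nu^k)\to\delta_\mu$ in $\P(\P(K))$, and this I would carry out exactly as in the proof of Theorem~\ref{main_result}. Fixing a countable measure-determining family $\mathcal T\subset C_b(K)$ (possible since $K$ is compact, hence $C(K)$ separable), for each $\psi\in\mathcal T$ one has $\mathbf E_k[\int\psi\,\dd\nu^k]=\int\psi\,\dd\mu_k\to\int\psi\,\dd\mu$ by narrow convergence of $(\mu_k)$, while independence of the $Z_j^k$ gives
\[
 \mathbf E_k\Big[\big(\textstyle\int\psi\,\dd\nu^k-\int\psi\,\dd\mu_k\big)^2\Big]=\frac1{(k-1)^2}\sum_{j=2}^k\mathbf E_k\big[\big(\psi(Z_j^k)-\textstyle\int\psi\,\dd\mu_k\big)^2\big]\le\frac{\|\psi\|_\infty^2}{k-1}\xrightarrow[k\to\infty]{}0 .
\]
Since $\P(\P(K))$ is compact ($\P(K)$ being compact), any subsequence of $(\mathrm{Law}(\nu^k))$ has a narrowly convergent sub-subsequence, with limit $\Theta$; evaluating the bounded continuous functionals $\nu\mapsto\int\psi\,\dd\nu$ and $\nu\mapsto(\int\psi\,\dd\nu)^2$ against $\Theta$ and using the two facts above forces $\int(\int\psi\,\dd\nu)^2\,\dd\Theta(\nu)=(\int\psi\,\dd\mu)^2=(\int\int\psi\,\dd\nu\,\dd\Theta(\nu))^2$, i.e.\ $\int\psi\,\dd\nu=\int\psi\,\dd\mu$ for $\Theta$-a.e.\ $\nu$. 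Countability of $\mathcal T$ and its measure-determining property then give $\Theta=\delta_\mu$, and since every subsequential limit equals $\delta_\mu$, the whole sequence $(\mathrm{Law}(\nu^k))$ converges to $\delta_\mu$.

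Finally, since $x_k\to x$ and $\mathrm{Law}(\nu^k)\to\delta_\mu$, the product measures satisfy $\mathrm{Law}\big((x_k,\nu^k)\big)=\delta_{x_k}\otimes\mathrm{Law}(\nu^k)\to\delta_{(x,\mu)}$ narrowly on $\RR^d\times\P(K)$ by \cite[Theorem~3.2]{MR0233396}; testing against the bounded continuous function $(y,\nu)\mapsto h(y,\nu)$ then yields $h_k(x_k,\mu_k)=\mathbf E_k[h(x_k,\nu^k)]\to h(x,\mu)$, which is \eqref{convergence_h_k}. The only ingredient that is not entirely routine is this law of large numbers for the triangular array $(\nu^k)$ — the sampling law $\mu_k$ varies with $k$ — but that is precisely the computation already performed in the proof of Theorem~\ref{main_result}, so here it only has to be invoked. (A slightly more quantitative alternative would be to bound $|h_k(x_k,\mu_k)-h(x,\mu)|\le\mathbf E_k[\omega(|x_k-x|+d_1(\nu^k,\mu))]$, with $\omega$ a modulus of continuity of $h$ on the compact set $\overline B(0,\sup_k|x_k|)\times\P(K)$, and then use the standard rate of convergence of empirical measures in Wasserstein distance on the fixed compact $K$, uniform in the sampling measure, together with $d_1(\mu_k,\mu)\to0$; I expect the measure-determining-family route above to be the cleaner one and the one most in line with the rest of the paper.)
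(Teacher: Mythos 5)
Your proof is correct and follows essentially the same route as the paper: both represent $h_k(x_k,\mu_k)$ through the empirical measure of an i.i.d.\ sample drawn from $\mu_k$ and invoke the triangular-array law of large numbers from the proof of Theorem~\ref{main_result} (compactness of $\P(\P(K))$ plus a countable measure-determining family) to get convergence in distribution of the empirical measures to the deterministic limit $\mu$. The only cosmetic difference is that the paper dispatches the dependence on $x_k$ via the uniform Lipschitz bound $|h_k(x_k,\mu_k)-h_k(x,\mu_k)|\le C_{f,g}|x-x_k|$ from {\bf(A2)}{\rm(ii)}, whereas you use joint convergence in distribution of $(x_k,\nu^k)$ together with continuity and boundedness of $h$ on $\RR^d\times\P(K)$; both are fine.
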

\begin{proof} Notice that {\bf(A2)}{\rm(ii)} implies that 
\be\label{eliminating_x_k_in_h_k}
|h_{k}(x_k,\mu_k)-h_{k}(x,\mu_k)|\leq  C_{f,g}|x-x_k|.
\ee
Now, let $Y^{k}_1, \hdots, Y^{k}_k$ be independent and identically distributed  $K$-valued random variables, defined on some probability space $(\Omega_k, \F_k, \mathbf{P}_k)$, with common distribution $\mu_k$. Using that $\P(\P(K))$ is compact and arguing as in the proof of Theorem \ref{main_result}, we obtain that, as $k\to \infty$, the $\P_1(\RR^d)$-valued random sequence $\left(\frac{1}{k-1}\sum_{j=2}^{k}\delta_{Y^{k}_j}\right)_{k \in \NN}$ converges in distribution to the deterministic measure $\mu$. Since \eqref{eliminating_x_k_in_h_k} can be written as 
\[
\left|h_{k}(x_k,\mu_k)-\mathbf{E}_{k} \left(h\left(x,\frac{1}{k-1}\sum_{j=2}^{k}\delta_{Y^{k}_j}\right)\right)\right|\leq  C_{f,g}|x-x_k|,
\]
relation \eqref{convergence_h_k} follows by letting $k\to \infty$.
\end{proof}
\begin{theorem}\label{convergence_result_regular_case} Assume  that {\bf(A2)} holds. Then, the following assertions hold true: \smallskip\\
{\rm(i)} The sequence $(\rho_{N_k})_{k\in \NN}$ converges to $\rho$ in $C([0,T]; \P_1(\RR^d))$. \smallskip\\
{\rm(ii)} The sequence $(u_{N_k})_{k\in \NN}$ converges   to  $u$ uniformly on compact subsets of $\RR^d \times [0,T]$. \\
{\rm(iii)} For a.e.  $x\in \RR^d$, the sequence $(\gamma^x_{N_k})_{k\in \NN}$ converges to $\gamma^x_{\ast}$ uniformly in $[0,T]$ and $(\dot{\gamma}^x_{N_k})_{k\in \NN}$ converges to $\dot{\gamma}^x_{\ast}$  in the weak* topology in $L^{\infty}([0,T];\RR^d)$. 
\end{theorem}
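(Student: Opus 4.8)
The plan is the following. Assertion (i) is essentially already contained in Theorem~\ref{main_result}: since $m_{N_k}:=\gamma_{N_k}\sharp m_0$ is a symmetric randomized distributed open-loop Nash equilibrium and $m_{N_k}\to m_\ast=\gamma_\ast\sharp m_0$, the second statement of that theorem gives $\sup_{t\in[0,T]}d_1(e_t\sharp m_{N_k},e_t\sharp m_\ast)\to0$, which is exactly convergence of $\rho_{N_k}(t)=e_t\sharp m_{N_k}$ to $\rho(t)=e_t\sharp m_\ast$ in $C([0,T];\P_1(\RR^d))$. The substance of the theorem is therefore in (ii) and (iii), and the idea is to upgrade this weak convergence of the time marginals to convergence of value functions and of optimal trajectories by direct-method arguments, the crucial inputs being: the $N$-uniform Lipschitz and local semiconcavity bounds for $u_N$ (Remark~\ref{on_the_uniform_regularity_properties_for_u_N}); the $N$-uniform velocity bound \eqref{uniform_bound_optimal_velocities_N_players} together with the confinement of $\supp(\rho_N)$ in a fixed compact $K_C$ (Remark~\ref{gamma_N_uniquely_defined}); and the data convergence from Lemma~\ref{lemma_on_convergence_of_h_k}.

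For (ii) I would first prove pointwise convergence $u_{N_k}(x,t)\to u(x,t)$ and then conclude local uniform convergence from the equi-Lipschitz bound of Remark~\ref{on_the_uniform_regularity_properties_for_u_N}. The bound $\limsup_k u_{N_k}(x,t)\le u(x,t)$ follows by testing \eqref{representation_u_N} against an arbitrary fixed $\gamma\in W^{1,2}([t,T];\RR^d)$ with $\gamma(t)=x$: the running term converges by dominated convergence, since $f_{N_k}(\gamma(s),\rho_{N_k}(s))\to f(\gamma(s),\rho(s))$ for each $s$ by Lemma~\ref{lemma_on_convergence_of_h_k} and $|f_{N_k}|\le C_{f,g}$, and likewise for the terminal term via $g_{N_k}$; then one infimizes over $\gamma$. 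For the reverse inequality, along a subsequence realizing $\liminf_k u_{N_k}(x,t)$ pick an optimal $\gamma_k\in\SS^{N_k}(x,t)$; by \eqref{uniform_bound_optimal_velocities_N_players} the $\dot\gamma_k$ are equibounded in $L^\infty$, so up to a further subsequence $\gamma_k\to\bar\gamma$ uniformly and $\dot\gamma_k\to\dot{\bar\gamma}$ weakly$^\ast$ in $L^\infty$, with $\bar\gamma(t)=x$; passing to the limit in $u_{N_k}(x,t)=\int_t^T\bigl[L(-\dot\gamma_k,\gamma_k)+f_{N_k}(\gamma_k,\rho_{N_k})\bigr]\dd s+g_{N_k}(\gamma_k(T),\rho_{N_k}(T))$, using weak lower semicontinuity of the $L$-integral (convexity of $L(\cdot,x)$, cf.\ \cite[Theorem~3.23]{MR990890}) and Lemma~\ref{lemma_on_convergence_of_h_k} plus dominated convergence for the remaining two terms, yields $\liminf_k u_{N_k}(x,t)\ge\int_t^T\bigl[L(-\dot{\bar\gamma},\bar\gamma)+f(\bar\gamma,\rho)\bigr]\dd s+g(\bar\gamma(T),\rho(T))\ge u(x,t)$. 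Alternatively one may invoke stability of viscosity solutions for the Hamilton--Jacobi equation in $(MFG_N)$ together with the comparison principle recalled after \eqref{viscosity_solution_hjb_equation}, using that $f_{N_k}(\cdot,\rho_{N_k}(\cdot))\to f(\cdot,\rho(\cdot))$ and $g_{N_k}(\cdot,\rho_{N_k}(T))\to g(\cdot,\rho(T))$ locally uniformly, which again follows from Lemma~\ref{lemma_on_convergence_of_h_k} and (i).

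For (iii), fix $x$ in the full-measure set on which, for every $k$, $\gamma^x_{N_k}$ is the unique minimizer defining $\SS^{N_k}(x)$ and, moreover, $\SS(x)=\{\gamma^x_\ast\}$ is a singleton; this set has full measure by \cite[Corollary~6.4.10]{CannSinesbook} and Remark~\ref{gamma_N_uniquely_defined}. By \eqref{uniform_bound_optimal_velocities_N_players}, every subsequence of $(\gamma^x_{N_k})$ has a further subsequence converging uniformly to some $\bar\gamma$ with $\bar\gamma(0)=x$, along which $\dot\gamma^x_{N_k}\to\dot{\bar\gamma}$ weakly$^\ast$ in $L^\infty$. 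Running the same limit passage as above on $u_{N_k}(x,0)=\int_0^T\bigl[L(-\dot\gamma^x_{N_k},\gamma^x_{N_k})+f_{N_k}(\gamma^x_{N_k},\rho_{N_k})\bigr]\dd s+g_{N_k}(\gamma^x_{N_k}(T),\rho_{N_k}(T))$ and using (ii) to identify $\lim_k u_{N_k}(x,0)=u(x,0)$, one obtains $\int_0^T\bigl[L(-\dot{\bar\gamma},\bar\gamma)+f(\bar\gamma,\rho)\bigr]\dd s+g(\bar\gamma(T),\rho(T))\le u(x,0)=\min\{\,\cdot\,:\gamma(0)=x\}$, i.e.\ $\bar\gamma\in\SS(x)$, hence $\bar\gamma=\gamma^x_\ast$. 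Since every subsequence has a sub-subsequence converging to $\gamma^x_\ast$, the full sequence $\gamma^x_{N_k}\to\gamma^x_\ast$ uniformly; and the derivatives, being equibounded in $L^\infty([0,T];\RR^d)$, then converge to $\dot\gamma^x_\ast$ weakly$^\ast$, giving (iii).

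The main obstacle I expect is not a single step but the bookkeeping needed to pass to the limit in these variational problems: one must guarantee that the $N$-dependent data $f_N,g_N$ converge in a sense strong enough to survive the weak limits of curves and marginals, which is precisely where Lemma~\ref{lemma_on_convergence_of_h_k} and the uniform confinement of $\supp(\rho_N)$ in a fixed compact set are essential, and one must exploit the a.e.\ uniqueness of minimizing curves (a consequence of the semiconcavity of $u$, \cite[Corollary~6.4.10]{CannSinesbook}) to identify the limit in (iii) with $\gamma^x_\ast$ rather than with a merely subsequential object.
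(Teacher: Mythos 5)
Your proposal is correct and follows essentially the same route as the paper: (i) is read off from Theorem~\ref{main_result}, (ii) is obtained by the same two-sided variational argument (testing \eqref{representation_u_N} against fixed curves for the $\limsup$, and Arzel\`a--Ascoli plus weak$^\ast$ compactness of velocities, weak lower semicontinuity of the $L$-integral, and Lemma~\ref{lemma_on_convergence_of_h_k} for the $\liminf$), upgraded to local uniform convergence by the $N$-uniform Lipschitz bound, and (iii) is deduced from the same limit passage at $t=0$ together with the a.e.\ uniqueness of minimizers from \cite[Corollary~6.4.10]{CannSinesbook}. The only cosmetic difference is that you spell out the subsequence/sub-subsequence bookkeeping and mention the viscosity-stability alternative, which the paper reserves for the discussion following Corollary~\ref{Convergence_from_the_system}.
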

\begin{proof}
Assertion {\rm(i)} follows directly from Theorem \ref{main_result}. Note that {\bf(A2)} implies that $(u_{N_k})_{k \in \NN}$ is a sequence of uniformly bounded functions on $\RR^d \times [0,T]$. Let us fix $(x,t) \in \RR^d \times [0,T]$. The definition of $u_{N_k}$, Remark \ref{gamma_N_uniquely_defined}{\rm(ii)}  and Lemma \ref{lemma_on_convergence_of_h_k} imply that 
\be\label{lim_sup_for_un_k} \limsup_{k \to \infty} u_{N_k}(x,t) \leq u(x,t).
\ee
 Let   $\gamma_{N_k}^{x,t}\in \SS^N(x,t)$ and $\gamma^{x,t} \in C([t,T]; \RR^d)$ be a cluster point of $(\gamma_{N_k}^{x,t})_{k\in \NN}$, with respect to the uniform convergence. The existence of $\gamma^{x,t}$ is ensured by \eqref{uniform_bound_optimal_velocities_N_players} and the  Arzel\`a-Ascoli theorem.  Up to the extraction of a subsequence, we can assume that $ \liminf_{k\to \infty}u_{N_k}(x,t)=\lim_{k\to \infty}u_{N_k}(x,t)$ and $\lim_{k\to \infty}\gamma_{N_k}^{x,t}=\gamma^{x,t}$ in $C([t,T]; \RR^d)$. Using estimate \eqref{uniform_bound_optimal_velocities_N_players} again, we get that   $\dot{\gamma}^{x,t}$ exists and $\dot{\gamma}_{N_k}^{x,t} \to \dot{\gamma}^{x,t}$ in the weak* topology in $L^{\infty}([0,T];\RR^d)$. By the weak lower semi-continuity of the cost functional we obtain 
$$
u(x,t) \leq \int_{t}^{T} \left[ L(-\dot{\gamma}^{x,t}(s),\gamma^{x,t}(s))+ f(\gamma^{x,t}(s),\rho(s)) \right]\dd s + g(\gamma^{x,t}(T),\rho(T)) \leq \liminf_{k\to \infty}u_{N_k}(x,t).
$$
Thus, by \eqref{lim_sup_for_un_k} we get the pointwise convergence  
$$
\lim_{k\to +\infty} u_{N_{k}}(x,t) = u(x,t) \hspace{0.4cm} \forall \; (x,t) \in \RR^d \times [0,T],
$$
and hence, using that $u_{N_k}$ is Lipschitz continuous, with a Lipschitz constant which is independent of $k$, assertion {\rm(ii)} follows from the Arzel\`a-Ascoli theorem. Finally, {\rm(iii)} is a consequence of the previous analysis with $t=0$ and the fact that \cite[Corollary 6.4.10]{CannSinesbook} implies that for a.e. $x\in \RR^d$ the optimization problem associated with $u(x,0)$ admits a unique solution.
\end{proof}
Recall that, as in the case of $(MFG)$, to each solution $(u_N,\rho_N)$ of $(MFG_N)$ we can associate a symmetric equilibrium  $(m_N, \hdots, m_N)\in \A_{rel}^N$  of the $N$-player game. As a consequence of this fact, Theorem \ref{main_result} and Theorem \ref{convergence_result_regular_case}, we have the following result. 
\begin{corollary}\label{Convergence_from_the_system} Let $((u_N, \rho_N))_{N \in \NN}$ be a sequence of solutions to $(MFG_N)$ {\rm(}$N \in \NN${\rm)}. Then, there exists a solution $(u,\rho) \in C([0,T];\P_1(\RR^d))$ to $(MFG)$ such that, up to some subsequence, $u_N \to u$ uniformly over  compact subsets of $\RR^d \times [0,T]$ and $\rho_N \to \rho$ in $C([0,T];\P_1(\RR^d))$.
\end{corollary}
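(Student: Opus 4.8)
The plan is to reduce the statement to results already established: the correspondence, in Subsection~\ref{PDE_N_players}, between solutions of $(MFG_N)$ and symmetric Nash equilibria of the $N$-player game in distributed Markov strategies; the equivalence, in Subsection~\ref{equivalence_both_mfg_formulations}, between Lagrangian MFG equilibria and solutions of $(MFG)$; and the convergence Theorems~\ref{main_result} and~\ref{convergence_result_regular_case}. No fresh analytical estimate is needed, so the proof is essentially an assembly of these ingredients; the only point deserving attention is the consistency of the identifications involved.

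First I would fix, for each $N\in\NN$, the given solution $(u_N,\rho_N)$ of $(MFG_N)$. By the ``conversely'' part of Subsection~\ref{PDE_N_players}, $(u_N,\rho_N)$ gives rise to a symmetric equilibrium in distributed open-loop strategies $(\gamma_N,\dots,\gamma_N)\in\A^N$ for the $N$-player game such that $\gamma_N\in\A$ satisfies the characteristic system \eqref{solution_characteristic_equations_N_players} for a.e.\ $x\in\supp(m_0)$ and $\rho_N(t)=e_t\sharp(\gamma_N\sharp m_0)$ for all $t\in[0,T]$; thus $(\gamma_N)_{N\in\NN}$ is a sequence of Nash equilibria in distributed Markov strategies in the sense of Subsection~\ref{PDE_N_players}. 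Moreover $u_N$ is recovered from $\rho_N$ as the value function \eqref{representation_u_N}, since the latter is the unique viscosity solution of the backward Hamilton--Jacobi--Bellman equation in $(MFG_N)$ (whose data are determined by $\rho_N$ alone, by the comparison principle used in Subsection~\ref{equivalence_both_mfg_formulations}); hence running the construction starting from the equilibrium returns exactly the pair $(u_N,\rho_N)$ we began with.

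Next I would apply Theorem~\ref{main_result} to the sequence $m_N:=\gamma_N\sharp m_0\in\A_{rel}$. Up to a subsequence $(m_{N_k})_{k\in\NN}$, it converges (narrowly, equivalently in $\P_1(\Gamma)$ by Lemma~\ref{some_basic_properties}{\rm(i)}) to a Lagrangian MFG equilibrium $m_\ast$. Put $\rho(t):=e_t\sharp m_\ast$ for $t\in[0,T]$ and let $u$ be the value function \eqref{representation_soluion_HJB} associated with this $\rho$. By the equivalence established in Subsection~\ref{equivalence_both_mfg_formulations}, $(u,\rho)$ is a solution of $(MFG)$ with $\rho\in C([0,T];\P_1(\RR^d))$. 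Since $(\gamma_{N_k})_{k\in\NN}$ is a subsequence of Nash equilibria in distributed Markov strategies with $\gamma_{N_k}\sharp m_0\to m_\ast$, assertions {\rm(i)} and {\rm(ii)} of Theorem~\ref{convergence_result_regular_case} apply word for word and give $\rho_{N_k}\to\rho$ in $C([0,T];\P_1(\RR^d))$ and $u_{N_k}\to u$ uniformly on compact subsets of $\RR^d\times[0,T]$, which is the claim.

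The only genuine obstacle is the bookkeeping inside the third paragraph: one must make sure that the limit $u$ furnished by Theorem~\ref{convergence_result_regular_case}{\rm(ii)} coincides with the value function \eqref{representation_soluion_HJB} built from the marginal flow $t\mapsto e_t\sharp m_\ast$, so that the equivalence of Subsection~\ref{equivalence_both_mfg_formulations} can be invoked to conclude that $(u,\rho)$ solves $(MFG)$. This is immediate once the definitions are unwound, but it is the step on which the whole argument hinges and should be spelled out explicitly.
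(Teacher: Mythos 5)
Your proposal is correct and follows essentially the same route as the paper: identify each solution $(u_N,\rho_N)$ of $(MFG_N)$ with a symmetric Nash equilibrium of the $N$-player game (via the converse construction of Subsection~\ref{PDE_N_players} and the uniqueness of the value function), then invoke Theorem~\ref{main_result} to extract a Lagrangian MFG equilibrium $m_\ast$, the equivalence of Subsection~\ref{equivalence_both_mfg_formulations} to see that the associated pair $(u,\rho)$ solves $(MFG)$, and Theorem~\ref{convergence_result_regular_case}{\rm(i)--(ii)} for the convergences. Your attention to the bookkeeping (that the given $(u_N,\rho_N)$ coincide with those attached to the equilibrium $\gamma_N$, and that the limit $u$ is the value function built from $t\mapsto e_t\sharp m_\ast$) is exactly the identification the paper leaves implicit.
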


\begin{remark} If $h=f$, $g$ satisfies
$$
\int_{\RR^d} \left(h(x,\mu)-h(x,\mu')\right) \dd (\mu-\mu')(x) \geq 0 \hspace{0.3cm} \forall \; \mu, \; \mu' \in \P_1(\RR^d),
$$
then the solution $(u,\rho)$ to $(MFG)$ is unique {\rm(}see \cite{LasryLions07} and \cite[Corollary 5.2]{hadikhanloo2017learning}{\rm)}. Since any Lagrangian equilibrium $m_\ast$ can be represented by $\gamma_\ast \sharp m_0$, where $\gamma_\ast^x \in \SS(x)$ is uniquely determined for a.e. $x\in \RR^d$, the Lagrangian equilibrium must  also be unique. In this case,  the results in Theorem \ref{convergence_result_regular_case} hold for the entire sequence $( u_N, \rho_N, \gamma_N)$ and the result in Corollary \ref{Convergence_from_the_system} holds for the entire sequence $( u_N, \rho_N)$. 
\end{remark}

Finally, let us point out that the convergence result in Corollary \ref{Convergence_from_the_system} can also be established directly, without appealing to Theorem \ref{main_result}, under a stronger regularity assumption than {\bf(A2)}{\rm(ii)}. Indeed, assume that, in addition to {\bf(A2)}, $m_0 \in L^p(\RR^d)$ for some $p \in (1, +\infty]$. If $(u_N,\rho_N)$ solves $(MFG_N)$, then by \cite[Theorem 8.2.1]{Ambrosiogiglisav} and the results in  \cite[Chapter 6]{CannSinesbook}, we must have that  $\rho_N(t)= \gamma_N^{(\cdot)}(t) \sharp m_0$, for some $\gamma_N \in \A$ such that $\gamma_N^x \in \SS^N(x)$ for all $x \in \supp(m_0)$. 
Arguing as in the proof of Lemma \ref{regularity_properties_value_function}  we get the existence of $C'>0$ such that 
$$\sup_{x \in \supp(m_0), \,  t\in [0,T]} |\dot{\gamma}_N^x(t)| \leq C' \hspace{0.3cm} \forall \; N\in \NN.$$
 Therefore, there exists $C>0$  and a compact set $K\subseteq \RR^d$, both independent of $N$, such that $\gamma_N^x(t) \in K$ for a.e. $x \in \supp(m_0)$  and all $t\in [0,T]$. In particular,   $\supp(\rho_{N}(t)) \subseteq K$ and $d_1(\rho_N(s), \rho_N(t)) \leq C|s-t|$ for  all $s$, $t\in [0,T]$ and $N\in \NN$. This implies the existence of $\rho\in C([0,T]; \P_1(\RR^d))$ such that, up to some subsequence, $\rho_N \to \rho$ in $C([0,T];\P_1(\RR^d))$ as $N \to \infty$. Since Lemma \ref{lemma_on_convergence_of_h_k}
 implies that $f_N(\cdot,\rho_N(\cdot))$ converges uniformly to $f(\cdot, \rho(\cdot))$ on compact subsets of $\RR^d \times [0,T]$, standard stability results for viscosity solutions of Hamilton-Jacobi-Bellman equations imply that,  up to some subsequence, $u_N \to u$ uniformly on compact subsets of $\RR^d\times [0,T]$,    $u$ being the unique viscosity solution to  \eqref{viscosity_solution_hjb_equation}. In particular,   $u_N$ being  locally semi-concave with respect to the space variable, uniformly in $N$, for all $t\in [0,T]$ we have that $\nabla u_N(x,t) \to \nabla u(x,t)$ for a.e.  $x \in \RR^d$. Using that  $u_N$ is globally Lipschitz,  uniformly in $N$, by dominated convergence we deduce that
 \be\label{pointwise_flow_convergence}\partial_{\xi} H\left( \nabla u_N( \cdot,\cdot),\cdot\right) \to \partial_{\xi} H\left( \nabla u( \cdot,\cdot),\cdot\right) \hspace{0.15cm}  \mbox{in $L^{1}([0,T]; L^{s}(\RR^d))$ for any $s\in [1,+\infty)$.}\ee
On the other hand,  using again the uniform local semiconcavity of $u_N(\cdot, t)$  and arguing as in the proof Proposition \ref{regularity_result_for_the_marginals} we get the existence of $c_3>0$, independent of $N$, such that 
\be\label{lp_estimates_continuity_equation_uniform_IN_n}
\|\rho_N(t)\|_{L^{p}}\leq c_3 \|m_0\|_{L^{p}} \hspace{0.5cm} \forall \; t\in [0,T]. 
\ee
Using this bound, we obtain that $\rho(t)$ is absolutely continuous w.r.t. the Lebesgue measure for a.e. $t\in [0,T]$, and its density,  denoted likewise by $\rho(t)$, satisfies $\|\rho(t)\|_{L^{p}}\leq c_3 \|m_0\|_{L^{p}}$ for a.e. $t\in [0,T]$. Since $\rho \in C([0,T]; \P_1(\RR^d))$, the previous bound implies that for all $t\in [0,T]$, the measure $\rho(t)$  is absolutely continuous w.r.t. the Lebesgue measure and the estimate $\|\rho(t)\|_{L^{p}}\leq c_3 \|m_0\|_{L^{p}}$ holds. Moreover, using \eqref{pointwise_flow_convergence}-\eqref{lp_estimates_continuity_equation_uniform_IN_n} we can pass to the limit in the second equation $(MFG_N)$ to obtain that the pair $(u,\rho)$ solves $(MFG)$.

\bibliographystyle{plain}

\begin{thebibliography}{10}
	
	\bibitem{MR2378491}
	C.~D. Aliprantis and K.~C. Border.
	\newblock {\em Infinite dimensional analysis}.
	\newblock Springer, Berlin, third edition, 2006.
	\newblock A hitchhiker's guide.
	
	\bibitem{MR2408257}
	L.~Ambrosio.
	\newblock Transport equation and {C}auchy problem for non-smooth vector fields.
	\newblock In {\em Calculus of variations and nonlinear partial differential equations}, volume 1927 of {\em Lecture Notes in Math.}, pages 1--41. Springer, Berlin, 2008.
	
	\bibitem{Ambrosiogiglisav}
	L.~Ambrosio, N.~Gigli, and G.~Savar\'e.
	\newblock {\em Gradient flows in metric spaces and in the space of probability measures.}
	\newblock Second edition. {L}ecture notes in {M}athematics {E}{T}{H} Z{\"u}rich. {B}irkh{\"a}user Verlag, Bassel, 2008.
	
	\bibitem{BardiCapuzzo96}
	M.~Bardi and I.~Capuzzo Dolcetta.
	\newblock {\em Optimal control and viscosity solutions of {H}amilton-{J}acobi-{B}ellman equations}.
	\newblock Birkauser, 1996.
	
	\bibitem{MR1485734}
	M.~Bardi and F.~Da~Lio.
	\newblock On the {B}ellman equation for some unbounded control problems.
	\newblock {\em NoDEA Nonlinear Differential Equations Appl.}, 4(4):491--510, 1997.
	
	\bibitem{bardipriuli14}
	M.~Bardi and F.~S. Priuli.
	\newblock Linear-quadratic {$N$}-person and mean-field games with ergodic cost.
	\newblock \emph{SIAM J. Control Optim.}, 52\penalty0 (5):\penalty0 3022--3052, 2014.

	\bibitem{bayraktarcohen}
	E.~Bayraktar and A.~Cohen.
	\newblock Analysis of a finite state many player game using its master equation.
	\newblock \texttt{arXiv:1707.02648 [math.AP]}, July 2017.	
	
	\bibitem{MR3644590}
	J.-D. Benamou, G.~Carlier, and F.~Santambrogio.
	\newblock Variational mean field games.
	\newblock In {\em Active particles. {V}ol. 1. {A}dvances in theory, models, and
		applications}, Model. Simul. Sci. Eng. Technol., pages 141--171.
	Birkh\"auser/Springer, Cham, 2017.
	
	\bibitem{MR0233396}
	P.~Billingsley.
	\newblock {\em Convergence of probability measures}.
	\newblock John Wiley \& Sons, Inc., New York-London-Sydney, 1968.
	
	\bibitem{cannarsa_capuani_2018}
	P.~Cannarsa and R.~Capuani.
	\newblock Existence and uniqueness for mean field games with state constraints.
	\newblock Preprint, 2018.
	
	\bibitem{Cannarsa_Capuani_Cardaliaguet_18}
	P. Cannarsa, R. Capuani and P. Cardaliaguet.
	\newblock  Mean field games wit state constraints: from mild to pointwise solutions of the PDE system.
	\newblock {\em arXiv preprint arXiv:1812.11374}, 2018.
	
	
	\bibitem{CannSinesbook}
	P.~Cannarsa and C.~Sinestrari.
	\newblock {\em Semiconcave {F}unctions, {H}amilton-{J}acobi {E}quations, and
		{O}ptimal {C}ontrol}.
	\newblock Progress in {N}onlinear {D}ifferential {E}quations and {T}heir
	{A}pplications. Birkauser, 2004.
	
	\bibitem{Cardaliaguet10}
	P.~Cardaliaguet.
	\newblock Notes on {M}ean {F}ield {G}ames: from {P.-L.} {L}ions' lectures at {C}oll\`ege de {F}rance.
	\newblock {\em Lecture {N}otes given at {T}or {V}ergata}, 2010.
	

	\bibitem{Cardaliaguet18}
	P.~Cardaliaguet.
	\newblock On the convergence of open loop Nash equilibria in mean field games with a local coupling.
	\newblock Technical report, February 2018.
	
	\bibitem{cardaliaguetetal15}
	P.~Cardaliaguet, F.~Delarue, J.-M.\ Lasry, and P.-L.\ Lions.
	\newblock The master equation and the convergence problem in mean field games.
	\newblock \texttt{arXiv:1509.02505 [math.AP]}, September 2015.

	\bibitem{CH17}
	P.~Cardaliaguet and S.~Hadikhanloo.
	\newblock Learning in mean field games: the fictitious play.
	\newblock {\em ESAIM Control Optim. Calc. Var.}, 23(2):569--591, 2017.
	
	\bibitem{cardaliaguet_meszaros_santambrogio_2018}
	P.~Cardaliaguet, A.~M\'esz\'aros, and F.~Santambrogio.
	\newblock First order mean field games with density constraints: Pressure equals price.
	\newblock {\em SIAM Journal on Control and Optimization}, 54(5):2672--2709, 2016.

	\bibitem{carmonadelarue}
	R.~Carmona and F.~Delarue.
	\newblock \emph{Probabilistic Theory of Mean Field Games with Applications}, volumes 83 and 84 of \emph{Probability Theory and Stochastic Modelling}.
	\newblock Springer, Cham, 2018.
		
	\bibitem{cp}
	A.~Cecchin and G.~Pelino.
	\newblock Convergence, fluctuations and large deviations for finite state mean field games via the master equation. 
	\newblock \texttt{arXiv:1707.01819 [math.AP]}, July 2017.

	\bibitem{cdfp}
	A.~Cecchin, P.~Dai~Pra, M.~Fischer, and G.~Pelino.
	\newblock On the convergence problem in Mean Field Games: a two state model without uniqueness.
	\newblock \texttt{arXiv:1810.05492 [math.OC]}, October 2018.

	\bibitem{MR2784834}
	F.~Da~Lio and O.~Ley.
	\newblock Convex {H}amilton-{J}acobi equations under superlinear growth conditions on data.
	\newblock {\em Appl. Math. Optim.}, 63(3):309--339, 2011.
	
	\bibitem{MR990890}
	B.~Dacorogna.
	\newblock {\em Direct methods in the calculus of variations}, volume~78 of {\em
		Applied Mathematical Sciences}.
	\newblock Springer-Verlag, Berlin, 1989.
	
	\bibitem{delarueetala}
	F.~Delarue, D.~Lacker, and K.~Ramanan.
	\newblock From the master equation to mean field game limit theory: a central limit theorem.
	\newblock \texttt{arXiv:1804.08542 [math.PR]}, April 2018.


	\bibitem{delarueetalb}
	F.~Delarue, D.~Lacker, and K.~Ramanan.
	\newblock From the master equation to mean field game limit theory: large deviations and concentration of measure.
	\newblock \texttt{arXiv:1804.08550 [math.PR]}, April 2018.


	\bibitem{delaruetchuendom}
	F.~Delarue and R.\ Foguen Tchuendom.
	\newblock Selection of equilibria in a linear quadratic mean field game.
	\newblock \texttt{arXiv:1808.09137 [math.PR]}, August 2018.


	\bibitem{Dweilk_Mazanti_2019}
	S. Dweik and G. Mazanti.
	\newblock Sharp semi-concavity in a non-autonomous control problem and $L^p$ estimates in an optimal-exit MFG (2019). 
	\newblock \texttt{arXiv:1812.08714 [math.PR]}, December 2018.
	
	\bibitem{feleqi13}
	E.~Feleqi.
	\newblock The derivation of ergodic mean field game equations for several populations of players.
	\newblock \emph{Dyn. Games Appl.}, 3\penalty0 (4):\penalty0 523--536, 2013.

	\bibitem{fischer17}
	M.~Fischer.
	\newblock On the connection between symmetric {$N$}-player games and mean field games.
	\newblock \emph{Ann. Appl. Probab.}, 27\penalty0 (2):\penalty0 757--810, 2017.

	\bibitem{gomes}
	D.~Gomes, J.~Mohr, and R.R.\ Souza.
	\newblock Continuous time finite state mean field games.
	\emph{Appl. Math. Optim.}, 68(1):\penalty0 99--143, 2013.

	\bibitem{hadikhanloo2017learning}
	S.~Hadikhanloo.
	\newblock Learning in anonymous nonatomic games with applications to
	first-order mean field games.
	\newblock {\em arXiv preprint arXiv:1704.00378}, 2017.

	\bibitem{huangetal06}
	M.~Huang, R.P. Malham{\'e}, and P.E. Caines.
	\newblock Large population stochastic dynamic games: {C}losed-loop {McKean-Vlasov} systems and the {N}ash certainty equivalence principle.
	\newblock \emph{Commun. Inf. Syst.}, 6\penalty0 (3):\penalty0 221--252, 2006.


	\bibitem{lacker16}
	D.~Lacker.
	\newblock A general characterization of the mean field limit for stochastic differential games.
	\newblock \emph{Probab. Theory Related Fields}, 165\penalty0 (3):\penalty0 581--648, 2016.


	\bibitem{lacker}
	D.~Lacker.
	\newblock On the convergence of closed-loop Nash equilibria to the mean field game limit.
	\newblock \texttt{arXiv:1808.02745 [math.PR]}, August 2018.
	
	\bibitem{LasryLions07}
	J.-M. Lasry and P.-L. Lions.
	\newblock Mean field games.
	\newblock {\em Jpn. J. Math.}, 2:229--260, 2007.
	
	\bibitem{mazanti_santambrogio_MFGs}
	G.~Mazanti and F.~Santambrogio.
	\newblock Minimal-time mean field games.
	\newblock Preprint, 2018.
	
	\bibitem{nutzetalii}
	M.~Nutz, J.~San~Martin, and X.~Tan.
	\newblock Convergence to the mean field game limit: a case study.
	\newblock \texttt{arXiv:1806.00817 [math.OC]}, June 2018.

	\bibitem{Villani03}
	C.~Villani.
	\newblock {\em Topics in {O}ptimal {T}ransportation}.
	\newblock Vol. 58 of {G}raduate {S}tudies in {M}athematics. American
	{M}athematical {S}ociety, Providence, RI, 2003.
	
	
\end{thebibliography}

\end{document}